\documentclass[11pt,a4paper]{extarticle}
\usepackage{geometry}
\geometry{
 a4paper,
 total={17.15cm,22.23cm},
 left=2.5cm, 
 right=2.5cm, 
 top=2.54cm,
 bottom=4.94cm
 }

\usepackage{stackrel}
%
%
\usepackage{verbatim}
\usepackage{enumerate}
\newcommand{\RN}[1]{%
  \textup{\uppercase\expandafter{\romannumeral#1}}%
}
\usepackage{xcolor}
\usepackage{subfig}
\usepackage{graphicx}
\usepackage{colortbl}



\makeatletter
\DeclareSymbolFont{mylargesymbols}{OMX}{ccex}{m}{n}
\DeclareMathDelimiter{\lbrace}{\mathopen}{symbols}{"66}{mylargesymbols}{"08}
\DeclareMathDelimiter{\rbrace}{\mathclose}{symbols}{"67}{mylargesymbols}{"09}
\DeclareMathDelimiter{(}{\mathopen}{operators}{"28}{mylargesymbols}{"00}
\DeclareMathDelimiter{)}{\mathclose}{operators}{"29}{mylargesymbols}{"01}
\DeclareMathDelimiter{[}{\mathopen}{operators}{"5B}{mylargesymbols}{"02}
\DeclareMathDelimiter{]}{\mathclose}{operators}{"5D}{mylargesymbols}{"03}
\DeclareMathSymbol{\braceld}{\mathord}{mylargesymbols}{"7A}
\DeclareMathSymbol{\bracerd}{\mathord}{mylargesymbols}{"7B}
\DeclareMathSymbol{\bracelu}{\mathord}{mylargesymbols}{"7C}
\DeclareMathSymbol{\braceru}{\mathord}{mylargesymbols}{"7D}
\makeatother

\makeatletter
\newcommand{\setword}[2]{%
  \phantomsection
  #1\def\@currentlabel{\unexpanded{#1}}\label{#2}%
}
\makeatother

\usepackage{amsmath}
\usepackage{amsthm,mathtools,amssymb}
\usepackage{blkarray}

\usepackage{mathrsfs} 

\DeclareFontFamily{U}{mathb}{\hyphenchar\font45}
\DeclareFontShape{U}{mathb}{m}{n}{
      <5> <6> <7> <8> <9> <10> gen * mathb
      <10.95> mathb10 <12> <14.4> <17.28> <20.74> <24.88> mathb12
      }{}
\DeclareSymbolFont{mathb}{U}{mathb}{m}{n}


\DeclareMathSymbol{\smalltriangleleft}{2}{mathb}{"9A}
\DeclareMathSymbol{\smalltriangleright}{2}{mathb}{"9B}

%

\newcommand{\gmmr}[2]{%
	\kern-0.7ex\mathrel{\raisebox{.19ex}{\scalebox{0.5}[0.5]{$#1$}}}%
	\kern-0.15ex\mathrel{\smalltriangleright\kern0.1ex\raisebox{.19ex}{\scalebox{0.5}[0.5]{$#2$}}}%
	\kern-0.6ex%
	}
\newcommand{\gmml}[2]{%
	\kern-0.6ex\mathrel{\raisebox{.19ex}{\scalebox{0.5}[0.5]{$#1$}}}%
	\kern0.1ex\mathrel{\smalltriangleleft\kern-0.15ex\raisebox{.19ex}{\scalebox{0.5}[0.5]{$#2$}}}%
	\kern-0.7ex%
	}
\DeclarePairedDelimiterX{\infdivx}[2]{(}{)}{%
  \kern1pt #1\delimsize\|\,#2%
}
\newcommand{\infdiv}{D_{\kern-0.5pt K\kern-1pt L\kern-0.5pt}\infdivx}

\renewcommand{\circle}{\kern-1pt\circ\kern-1pt}
\newtheorem{theorem}{Theorem}[section]
\newtheorem*{theorem*}{Implicit function theorem}
\newtheorem{corollary}{Corollary}[theorem]
\newtheorem{lemma}[theorem]{Lemma}

\theoremstyle{definition}

{
    \theoremstyle{plain}
    \newtheorem*{assumption1}{Assumption 1}
    \newtheorem*{assumption2}{Assumption 2} 
    \newtheorem*{assumption1'}{Assumption 1$^\prime$}
    \newtheorem*{assumption2'}{Assumption 2$^\prime$}
}

\theoremstyle{remark}
\newtheorem{remark}{\bf Remark}[section]
\newtheorem{example}{\bf Example}[section]

\usepackage{authblk}
\usepackage{hyperref}
\providecommand{\keywords}[1]{\textbf{\textit{Index terms---}} #1}

\DeclareMathOperator{\sign}{sgn}
\newcommand*{\Scale}[2][4]{\scalebox{#1}{$#2$}}%

\newcommand{\interior}[1]{%
  {\kern0pt#1}^{\mathrm{o}}%
}

\title{Analytic Implicit Functions}
\author[]{Kyung Soo Rim\thanks{This work was supported by a grant from the National Research Foundation of Korea, Grant No. NRF-2017R1E1A1A03070307.} }
\affil[]{Department of Mathematics, Sogang University}
\affil[]{\small\texttt{ksrim@sogang.ac.kr}}


\date{\today}

\begin{document}
\maketitle

\begin{abstract}
In this paper, we introduce a method of converting implicit equations to the usual forms of  functions locally without differentiability.
For a system of implicit equations which are equipped with continuous functions, 
if there are  
unique analytic implicit functions, that satisfies the system
in some rectangle, then each analytic function is represented as a power series which is the weak-star limit of
partial sums in the space of essentially bounded functions. 
%
We also provide numerical examples in order to demonstrate how the theoretical results in this article can be applied in practice and to show the effectiveness of the suggested approaches.
%
\end{abstract}
\keywords{implicit function, implicit function theorem, inverse function theorem, analytic function, continuous function, power series, weak-star limit} 

\section{Introduction}
Setting up implicit equations and solving them has long been so important that it has virtually come to describe what mathematical analysis and its applications are all about. 
Many useful mathematical models have expressions of implicit functions, even for problems of minimizing or maximizing functions subject to constraints.
From information of a single point solution, the implicit function theorem allows for understanding the relation between variables but, in spite of that, the implicit function theorems do not provide a usual function form defined explicitly. 
A central issue 
in this subject is how to derive a power series expansion from an equation involving parameters  because the representation as a function form enables us to estimate the mathematical models more easily, in economics, physics, engineering, etc., as well as mathematical development. 

In 1669, Issac Newton introduced one of the first instances of analyzing the behavior of an implicitly defined function originated from an equation jointed by mutually correlated variables 
(\cite{newton}). 
In 1684, Gottfried Leibniz applied implicit differentiation to calculate partial derivatives, 
which is a way to take the derivative of a term with respect to another variable without having to isolate either variable (\cite{struik}).
Joseph Lagrange, in 1770, derived an inversion formula which is one of the fundamental formulas of combinatorics. The formula is closely related to an inverse function theorem, by formulating a formal power series expansion for an implicit function \cite[Theorem 2.3.1]{kp}. In the 19th century, Augustin-Louis Cauchy was the first to state and solve an implicit equation using a rigorous mathematical form. He gave proof of the implicit function theorem having a form of power series by using 
Hardamard’s estimate for the Taylor coefficient of a given function,
which is induced formally
(\cite[Theorems 2.4.6 and 6.1.2]{kp}). Since then, there have been many improvements on the existence of usual function expressions
from implicit equations under suitable assumptions. 

Most importantly, in 1877, Ulisse Dini was the first to prove the real variable 
result of an implicit function theorem (\cite{dini}).
Conceived over two hundred years ago as a tool for studying mechanics and physics, the implicit function theorem has many formulations and is used in many aspects of mathematics. In addition, there are many particular types of implicit function theorems which are extended to Banach spaces, even under degenerate or non-smooth situations (e.g., \cite{nash,moser}). 
Implicit function theorems, even those that are quite sophisticated, are fundamental and powerful parts of the foundation of modern mathematics.
Almost all studies of an implicit function are related to its existence rather than showing how they behave. 

This research is devoted to determining a form of function of an implicit function which does not adopt the Taylor series of a function given, as used in Cauchy’s method, but instead of differentiability, we need the integrability of the given function.
This article highlights the formulations of 
a power series representation from an implicit equation
such that its partial sum converges weak-star in the space of essentially bounded functions. This method relies essentially on the continuity 
of a given function.
In addition, we present several examples for the computational validity of this study, along with a numerical calculation for each.
While there have been too many contributions to the theory of implicit functions to mention them all here, we would like to refer to books \cite{kp} and \cite{dr} as good overviews.


Throughout the article we use multi-index notations. 
Let $\alpha=(\alpha_1,\ldots,\alpha_n)$, $\beta=(\beta_1,\ldots,\beta_n)$ in $\mathbb{Z}^n$ and $c$ a scalar. 
We denote $\alpha\le\beta$, $\alpha\le c$, and $\alpha\pm c$ if $\alpha_k\le \beta_k$, $\alpha_k\le c$, and $(\alpha_1\pm c,\ldots,\alpha_n\pm c)$ for every $k$, respectively, where the plus–minus sign is replaced by either the plus or minus sign in the same order.
Moreover, 
for a vector-valued function $f$, $f_k$ denotes the $k$th component of $f$.
For indexed quantities $d_\alpha$, $\left(d_\alpha\right)_\alpha$ is called 
a tensor (or a matrix only when $\alpha$ has two components) and 
the determinant of a matrix $\left(d_\alpha\right)_\alpha$
is defined by $|\left(d_\alpha\right)_\alpha|$ 
or $\det \left(d_\alpha\right)_\alpha$.
The subsets $R$ and $I$ of $\mathbb{R}^n$ indicate rectangles
which are the Cartesian products of $n$ compact intervals and $|R|$ defines the Lebesgue measure of $R$.
Especially, $I$ represents the codomain of an implicit function.
Sometimes the integer $N$ is considered a multi-index, in which case all components are defined as $N$.
%
With these the main results are formulated in Theorems \ref{polynomial solution}, \ref{analytic solution}, and \ref{system of analytic solutions}.

Before understanding the proposed methods, we must first state the classical implicit function theorem as follows.

\begin{theorem*}
Let $f(x,y):\mathbb{R}^{n+m}\to\mathbb{R}^m$ be continuously differentiable with $f(a,b)=\mathbf{0}$.  
Suppose that $|J_{f,y}(a,b)|=\det\big(\partial_{y_j} f_i(a,b)\big)\ne0$. Then there is  
an $R\times I$ of $(a,b)$ and a unique continuously differentiable function $g:R\to I$ such that $f(x,g(x))=\mathbf{0}$ in $R$.
Moreover, 
the partial derivative of $g$ is given by 
$\partial_{x_j}g(x)=-J_{f,y}(\partial_{y_j}f_i(x,g(x)))_{m\times m}^{-1}(\partial_{x_j}f(x,g(x)))_{m\times1}$.
\end{theorem*}

This implicit function theorem has been extended to that of analytic functions. If every $f_k$ is analytic in the implicit function theorem, then every $g_k$ is also analytic (refer to \cite[Theorem 6.1.2]{kp}).

\section{Implicit functions} \label{implicit functions}

In this section, we derive 
the integral representation of an implicit function to isolate a dependent variable
even though it is neither transcendental nor algebraic.
Let $\sign$ be the signum function on $\mathbb{R}$ and $f(x,y)$ a function on $\mathbb{R}^{n+1}$. 
For each $x$, define a function $\sign_y f(x,y)$ by $1$ if $f(x,y)\ge0$ and $-1$ otherwise.
We begin with a fundamental condition for the local existence of an implicit function.

\begin{assumption1}
Let $f:\mathbb{R}^{n+1}\to\mathbb{R}$ be continuous. 
Then there is an $R\times I\subset\mathbb{R}^{n+1}$ 
such that for each $x\in R$, $\sign_y f(x,y)$
has only one jump discontinuity on $I$.
\end{assumption1}

From Assumption 1, by the intermediate value theorem, by the continuity of $f$, and by the uniqueness of jump discontinuity, there is a continuous function $g:R\to I$ such that $f(x,g(x))=0$ in $R$. 
Moreover, if $f$ is continuously differentiable such that $\partial_yf(a,b)\ne0$ with $f(a,b)=0$, then
by the implicit function theorem 
there is an $R\times I\ni(a,b)$,  which has a unique continuously differentiable function $y=g(x)$ such that $f(x,g(x))=0$ in $R$. 
Also, 
the Jacobian matrix is invertible in $R$. This implies that  
for each $x$, $\sign_y f(x,y)$ must have only one jump discontinuity on $I$.
Thus, the sufficient condition of the implicit function theorem guarantees Assumption 1.

\begin{example}
\begin{itemize}
\item[$(i)$]
Let $f(x,y)=x^2+y^2-1$ be a function with $f(0,1)=0$. We want to find a function for $y$ such that $f=0$ in some closed interval of $x=0$.
Let us consider $R\times I=[-1,1]\times[0,2]\ni(0,1)$. Then   
$\sign_y f(x,y)$ has only one jump discontinuity on $I$. 
(On the other hand,
since $\partial_yf(0,1)=2\ne0$, by application of the implicit function theorem
there is an $R\times I$ of $(0,1)$ such that for each $x\in R$, $\sign_yf(x,y)$ has the only one jump discontinuity on $I$.) 
%

%
\item[$(ii)$]
Let $f(x,y)=y^2-x^4$ be a function with $f(0,0)=0$. Although $\partial_y f(0,0)=0$, 
by taking $R\times I=[-1,1]\times[0,1]$ or $[-1,1]\times[-1,0]\ni(0,0)$, we can justify that $f$ satisfies Assumption 1. 
\end{itemize}
\end{example}

For each $x\in R$, if we define $n_y(f)(x)=1$ if $\sign_y f(x,y)$ is increasing and $-1$ if $\sign_y f(x,y)$ is decreasing,
%
then, $n_y(f)$ is constant. 
Indeed, by the intermediate value theorem
we may let $y(x)\in I$ be a unique solution such that $f(x,y(x))=0$ for each $x\in R$.
Suppose that there is $\xi_1$ and $\eta_1$ such that $n_y(f)(\xi_1)n_y(f)(\eta_1)=-1$. We may assume that $n_y(f)(\xi_1)=1$ and $n_y(f)(\eta_1)=-1$. 
We choose $r>0$ such that $y(x)+r\in I$ or $y(x)-r\in I$
for every $x\in R$.
Let $x_0$ be the midpoint of the line segment $\overline{\xi_1\eta_1}$ between $\xi_1$ and $\eta_1$.
If $f(x_0,y(x_0)+r)>0$ or $f(x_0,y(x_0)-r)<0$, then select $\overline{x_0\eta_1}$, whereas if $f(x_0,y(x_0)+r)<0$ or $f(x_0,y(x_0)-r)>0$, then select $\overline{\xi_1x_0}$. 
Write the selected one as $\overline{\xi_2\eta_2}$. 
This procedure reaches the same situation 
of $n_y(f)(\xi_2)=1$ and $n_y(f)(\eta_2)=-1$
as the beginning.
Repeating the above process inductively by taking the midpoint of $\overline{\xi_2\eta_2}$, we have the monotone sequences $\xi_k$ and $\eta_k$, so that $\xi_k\nearrow\zeta$ and $\eta_k\searrow\zeta$ for some $\zeta\in R$. The convergence of $\xi_k$ and $\eta_k$ and the continuity of $f$ yield the convergence of $y(\xi_k)$ and $y(\eta_k)$, i.e., $y(\xi_k)$ and $y(\eta_k)$ go to $y(\zeta)$ as $k\to\infty$. 

It follows that
$f(\xi_k,y(\xi_k)+r)>0$ (or $f(\xi_k,y(\xi_k)-r)<0$)
and
$f(\eta_k,y(\eta_k)+r)<0$ (or $f(\eta_k,y(\eta_k)-r)>0$) for every $k$.
Taking the limit $k\to\infty$, we have
$f(\zeta,y(\zeta)+r)>0$ (or $f(\zeta,y(\zeta)-r)<0$)
and
$f(\zeta,y(\zeta)+r)<0$ (or $f(\zeta,y(\zeta)-r)>0$). This leads contradiction.
Therefore, we conclude the following lemma.

\begin{lemma} \label{fixing}
In $R$, $n_y(f)$ is constant (either $1$ or $-1$).
\end{lemma}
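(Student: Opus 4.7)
The plan is to argue by contradiction, in the same spirit as the discussion preceding the lemma. Assume $n_y(f)$ takes both values $+1$ and $-1$, and pick $\xi_1,\eta_1\in R$ realising them. From Assumption 1 the intermediate value theorem already furnishes a continuous implicit function $y:R\to I$ with $f(x,y(x))=0$, so $y$ is bounded away from the boundary of $I$ on the compact rectangle $R$. This lets us fix a single $r>0$ small enough that, for every $x\in R$, at least one of $y(x)+r$ and $y(x)-r$ lies in $I$; this uniform choice is what allows a bisection argument to proceed on all of $R$.

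With $r$ fixed, $n_y(f)(\xi_1)=1$ encodes the strict inequality $f(\xi_1,y(\xi_1)+r)>0$ or $f(\xi_1,y(\xi_1)-r)<0$, while $n_y(f)(\eta_1)=-1$ gives the opposite strict inequality. Next I would bisect the straight segment $\overline{\xi_1\eta_1}\subset R$: evaluate the sign of $f(x_0,y(x_0)\pm r)$ at the midpoint $x_0$ and keep whichever half of the segment still exhibits the sign mismatch at its two endpoints. Iteration produces a nested sequence of segments whose length tends to zero, and since $\xi_k$ and $\eta_k$ are monotone along the original line they converge to a common point $\zeta\in R$.

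To finish, continuity of $y$ gives $y(\xi_k),y(\eta_k)\to y(\zeta)$, and continuity of $f$ then upgrades the strict inequalities at each stage into $f(\zeta,y(\zeta)+r)\ge 0$ together with $f(\zeta,y(\zeta)+r)\le 0$ (or the analogous pair with $-r$). Hence $f(\zeta,\cdot)$ would have a zero at $y(\zeta)\pm r$ in addition to $y(\zeta)$, contradicting the uniqueness of the jump discontinuity of $\sign_y f(\zeta,\cdot)$ on $I$ guaranteed by Assumption 1.

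The main obstacle I expect is the bookkeeping in the bisection step: for each sign configuration that can appear at $x_0$ one has to verify that exactly one of the two half-segments inherits the endpoint mismatch, and one has to be certain that the choice of $r$ does not depend on the step. A conceptually cleaner alternative that I would keep in reserve is to show that $n_y(f)$ is \emph{locally} constant, using only the continuity of $y$ and $f$ to preserve the signs of $f(x,y(x)\pm r)$ in a neighbourhood of each $x_0\in R$, and then to invoke the connectedness of the rectangle $R$ to conclude.
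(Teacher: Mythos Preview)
Your proposal is essentially the same bisection argument the paper gives in the paragraph immediately preceding the lemma: a uniform $r$, the midpoint selection rule on $\overline{\xi_1\eta_1}$, monotone $\xi_k,\eta_k\to\zeta$, and contradictory sign information at $(\zeta,y(\zeta)\pm r)$. Your limit step is in fact slightly more careful than the paper's (you correctly pass to non-strict inequalities and then contradict the uniqueness of the jump, whereas the paper asserts the strict inequalities themselves persist); one small slip is that $y$ need not be bounded away from $\partial I$, but any $r\le|I|/2$ still makes at least one of $y(x)\pm r$ land in $I$, so your conclusion stands.
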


Let $\Theta$ be the Heaviside step function.
For each $x$, define a function $\Theta_y f(x,y)$ that assigns $1$ if $f(x,y)\ge 0$ and $0$ otherwise.
For a rectangle $R'\subset R$,
we define a quantity as 
\begin{equation} \label{volume integration in poly}
	\iint_{R'\times I} \Theta_y f(x,y)\,dx\,dy = v_{R'\times I}.
\end{equation}
The next result provides a necessary condition for the existence of a function form of an implicit equation. 

\begin{lemma} \label{sign} 
If $R\times I$ contains a unique function $y=g(x)$ such that $f(x,y)=0$, then  
\begin{equation} \label{summary of volume element}
	v_{R'\times I} = \frac{n_y(f)+1}{2}|R'|\max I 
		+ \frac{n_y(f)-1}{2}|R'| \min I  - n_y(f)\int_{R'}g\,dx
\end{equation}
for every rectangle $R'\subset R$.
\end{lemma}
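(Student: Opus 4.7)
The plan is to compute $v_{R'\times I}$ by Fubini's theorem, reducing it to an iterated integral whose inner integral over $y$ can be evaluated from the structural information supplied by Lemma~\ref{fixing}. For each fixed $x\in R'\subset R$, the uniqueness of $g(x)$ as a zero of $f(x,\cdot)$ in $I$, together with the intermediate value theorem applied to the continuous function $f(x,\cdot)$, implies that $f(x,y)$ keeps a constant sign on each of $[\min I,\,g(x))$ and $(g(x),\,\max I]$, and the two signs are opposite. Lemma~\ref{fixing} then guarantees that the direction of this sign change is the same at every $x\in R$: if $n_y(f)=1$ then $f(x,y)<0$ for $y<g(x)$ and $f(x,y)\ge 0$ for $y\ge g(x)$, while if $n_y(f)=-1$ the reverse holds.

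With this in hand the inner integral is immediate. When $n_y(f)=1$,
\[
\int_I \Theta_y f(x,y)\,dy \;=\; \max I - g(x),
\]
and when $n_y(f)=-1$,
\[
\int_I \Theta_y f(x,y)\,dy \;=\; g(x)-\min I,
\]
since the single point $y=g(x)$ contributes nothing to a Lebesgue integral. Because $\Theta_y f$ is bounded and measurable on the compact set $R'\times I$ and $g$ is continuous, Fubini applies to \eqref{volume integration in poly} and integration over $R'$ yields
\[
v_{R'\times I} \;=\; |R'|\max I - \int_{R'} g\,dx \quad\text{if } n_y(f)=1,
\]
respectively
\[
v_{R'\times I} \;=\; \int_{R'} g\,dx - |R'|\min I \quad\text{if } n_y(f)=-1.
\]
The proof then concludes by observing that substituting $n_y(f)=\pm 1$ into the right-hand side of \eqref{summary of volume element} reproduces exactly these two expressions, so they are packaged into the single formula stated.

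The main obstacle is not the calculation itself, which is a routine Fubini argument, but rather the uniformity step: to write one formula for $v_{R'\times I}$ that is valid across all of $R'$, we must know that the splitting of $I$ into $\{f<0\}$ and $\{f\ge 0\}$ has the \emph{same} orientation for every $x\in R'$. This uniformity is precisely what Lemma~\ref{fixing} supplies; without it the sign pattern could flip as $x$ varies and the clean closed form would fail.
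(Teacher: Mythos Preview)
Your proof is correct and follows essentially the same approach as the paper: split into the two cases $n_y(f)=\pm 1$, evaluate the integral from the sign structure of $f(x,\cdot)$, and then merge the results into the single formula~\eqref{summary of volume element}. The only cosmetic difference is that for the case $n_y(f)=-1$ the paper reduces to the already-proved case $n_y(f)=1$ via the reflection $F(x,y)=f(x,-y)$, whereas you compute that case directly; both routes yield the same expression $v_{R'\times I}=\int_{R'}g\,dx - |R'|\min I$.
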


By Lemma \ref{fixing}, the right-hand side of (\ref{summary of volume element}) is well defined.

\begin{proof}[Proof of Lemma \ref{sign}]
Let $R'\subset R$ be a rectangle.
Suppose that $n_y(f)=1$. 
The integration (\ref{volume integration in poly}) is calculated as
\begin{equation} \label{sign 1}
	\iint_{R'\times I} \Theta_y f(x,y)\,dx\,dy = |R'|\max I - \int_{R'}g\,dx.
\end{equation}
If $n_y(f)=-1$, then
by putting $F(x,y)=f(x,-y)$, we have $n_y(F)=1$ on $-I$ with $y=-g(x)$ which satisfies $F(x,y)=0$. By (\ref{sign 1}) with $F$ and $y=-g(x)$ on $R\times(-I)$ instead of $f$ and $y=g(x)$,
\begin{equation} \label{sign 1-1}
\iint_{R'\times(-I)}\Theta_yF(x,y)\,dx\,dy=|R'|\max (-I) +\int_{R'}g\,dx.
\end{equation}
By the change of variables ($-y\mapsto y$), (\ref{sign 1-1}) equals
\begin{equation}\label{criterion1-3}
\iint_{R'\times I}\Theta_yf(x,y)\,dx\,dy=-|R'|\min(I)+\int_{R'}g\,dx.
\end{equation}
According to (\ref{sign 1}) and (\ref{criterion1-3}), we conclude 
(\ref{summary of volume element}). Therefore, the proof is complete. 
\end{proof}

Now, we have the following integral representation of an implicit function. 

\begin{theorem} \label{coro of representation}
The continuous implicit function $y=g(x)$ such that $f=0$ in $R$, is given by  
\begin{equation}\label{int repre}
g(x) = \frac{1+n_y(f)}{2}\max I + \frac{1-n_y(f)}{2}\min I -n_y(f)\int_I\Theta_y f(x,y)\,dy
\qquad(x\in R).
\end{equation}
\end{theorem}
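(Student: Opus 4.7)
The plan is to obtain Theorem \ref{coro of representation} as a pointwise consequence of Lemma \ref{sign}. First I would apply Fubini's theorem to the left-hand side of (\ref{summary of volume element}), rewriting the double integral as an iterated one. This turns (\ref{summary of volume element}) into the statement that for every rectangle $R'\subset R$,
\begin{equation*}
\int_{R'}\left[\int_I \Theta_y f(x,y)\,dy\right]dx = \int_{R'}\left[\tfrac{1+n_y(f)}{2}\max I + \tfrac{n_y(f)-1}{2}\min I - n_y(f)\,g(x)\right]dx.
\end{equation*}

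Next I would verify that both integrands are continuous functions of $x$ on $R$, so that the equality of integrals over all subrectangles forces equality of the integrands. The right-hand side is continuous because $g$ is continuous (established immediately after Assumption~1 from the intermediate value theorem together with the uniqueness of the jump of $\sign_y f$). For the left-hand side, Lemma~\ref{fixing} allows me to split into the two cases $n_y(f)=\pm 1$; in each case $\Theta_y f(x,\cdot)$ is the characteristic function of an interval with one endpoint $g(x)$ and the other endpoint $\max I$ or $\min I$, so the inner integral reduces in closed form to $\max I - g(x)$ or $g(x) - \min I$, both continuous in $x$.

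Once both sides are known to be continuous and to integrate to the same value over every subrectangle, a standard consequence of the Lebesgue differentiation theorem (or, equivalently, shrinking $R'$ to a point and invoking the integral mean value theorem) gives pointwise equality on $R$. Solving the resulting identity for $g(x)$ and using $n_y(f)^{2}=1$ to simplify the coefficients via $n_y(f)\bigl(n_y(f)\pm 1\bigr)/2=(1\pm n_y(f))/2$ produces precisely the formula (\ref{int repre}).

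The only mildly delicate point is the continuity of $x\mapsto \int_I \Theta_y f(x,y)\,dy$, but because this integral simplifies in closed form through $g$, no genuine analytic work is required; the theorem is essentially a direct inversion of Lemma~\ref{sign}, and the rest of the argument is algebraic bookkeeping.
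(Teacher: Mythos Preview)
Your proposal is correct and follows essentially the same route as the paper: apply Fubini to the identity of Lemma~\ref{sign}, then shrink the rectangle $R'$ to a point and invoke the Lebesgue differentiation theorem to pass from equality of averages to the pointwise formula. The paper's proof is terser and does not spell out the continuity check (relying implicitly on the a.e.\ conclusion of Lebesgue differentiation together with continuity of $g$), but the argument is the same.
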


Equation (\ref{int repre}) does not depend on the choice of a rectangle $R\times I$
whenever it contains $y=g(x)$.

\begin{proof}[Proof of Theorem \ref{coro of representation}]
Let $x\in R$ and take cubes $Q\subset R$ that shrink to a singleton $\{x\}$. 
Divide $|Q|$ into both sides of (\ref{volume integration in poly}) and (\ref{summary of volume element}) after replacing $R'$ with $Q$.
By Fubini's theorem and by the Lebesgue differentiation theorem,
\begin{equation*} 
	\int_I \Theta_y f(x,y)\,dy = \frac{n_y(f)+1}{2}\max I 
		+ \frac{n_y(f)-1}{2}\min I  - n_y(f)g(x)
\end{equation*}
for every point of continuity of $f$.
Therefore, the proof is complete.
\end{proof}

Now, we set up an algebraic operator that will appear in the main theorems. 
Let $C=(c_{i_1\cdots i_N})$ and $A=(a_{j_1j_2})$ be an $m_1\times \cdots \times m_N$ tensor and $n_1\times n_2$ matrix, respectively.
If $n_1=m_k$, then we
define a tensor contraction between $A$ and $C$ by
\begin{equation*}
	A\stackrel{1\to k}{\cdot} C 
		= \left(\sum_{i_k=1}^{m_k}a_{i_k j_2}c_{\cdots i_k\cdots}\right)
\end{equation*}
which produces an $m_1\times \cdots \times m_{k-1}\times n_2 \times m_{k+1}\times \cdots \times m_N$ tensor.
If $n_2=m_k$, then $A\stackrel{2\to k}{\cdot} C$ is also defined, and it becomes an $m_1\times \cdots \times m_{k-1}\times n_1 \times m_{k+1}\times \cdots \times m_N$ tensor.
For another $n_1'\times n_2'$ matrix $B$,
if $n_1=m_i$ and $n_2'=m_j$ $(i\ne j)$, then we have the commutative property of 
\begin{equation*}
	A\stackrel{1\to i}{\cdot} (B\stackrel{2\to j}{\cdot} C)
		= B\stackrel{2\to j}{\cdot}(A\stackrel{1\to i}{\cdot}C).
\end{equation*}
Here, the parentheses are omitted providing the commutative property.

\section{Polynomial implicit functions} \label{pif}

In this section, we derive the implicit function of $y=g(x)$ such that $f(x,y)=0$, provided 
$y=g(x)$ is   
a unique multivariate polynomial on $R\times I$.
Put $R=\prod_{k=1}^{n}[\xi_k,\eta_k]$.
Let  $N_k$ be a partition number and 
$\xi_k<\xi_k+\Delta_k< \xi_k+2\Delta_k<\cdots< \xi_k+N_k\Delta_k=\eta_k$
a partition  on $[\xi_k,\eta_k]$,
where $\Delta_k=(\eta_k-\xi_k)/N_k$ is a partition size.
We define a grid block $R_\alpha$ by
\begin{equation}\label{block decomposition}
	R_{\alpha}=\prod_{k=1}^n 
	\left[\xi_{k}+(\alpha_k-1)\Delta_k, 
	\xi_{k}+\alpha_k\Delta_k\right]
	\quad(1\le \alpha_k\le N_k),
\end{equation}
where $1\le\alpha_k\le N_k$ $(1\le k\le n)$.
The union of all $R_{\alpha}$ is  $R$ and 
$|R_{\alpha}|=\prod_{k=1}^{n}(\eta_k-\xi_k)/N_k$.
For $a\in \mathbb{R}^n$,
we define a matrix by
\begin{equation} \label{vander}
	V_{N_k} = \begin{pmatrix}
				\left(\xi_{k}+\alpha_k\Delta_k-a_k\right)^j 
				- \left(\xi_{k}+(\alpha_k-1)\Delta_k-a_k\right)^j
			 \end{pmatrix}_{N_k\times N_k}
			 \quad(1\le \alpha_k,\,j\le N_k),
\end{equation}
where   
$\alpha_k$ and $j$ denote a row and column number, respectively.

\begin{lemma}\label{invertibility} 
For every $N_k$ $(1\le k\le n)$, $|V_{N_k}|$ is positive and independent of $a$. Consequently,   
$V_{N_k}$ is invertible.
\end{lemma}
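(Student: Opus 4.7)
My plan is to reduce $|V_{N_k}|$ to a classical Vandermonde determinant, from which both positivity and independence of $a$ will be transparent. Fix $k$ and, for brevity, write $N=N_k$, $\Delta=\Delta_k$, and $y_i:=\xi_k+i\Delta-a_k$ for $0\le i\le N$, so that the $(\alpha,j)$-entry of $V_{N_k}$ is $y_\alpha^j-y_{\alpha-1}^j$. Setting $u_i:=(y_i,y_i^2,\ldots,y_i^N)$, the $\alpha$-th row of $V_{N_k}$ is the difference $u_\alpha-u_{\alpha-1}$.

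First I would apply the determinant-preserving operations ``add row $\alpha$ to row $\alpha+1$'' successively for $\alpha=1,\ldots,N-1$. These telescope so that the new $\alpha$-th row becomes $u_\alpha-u_0$, giving
\[
|V_{N_k}| = \det\bigl(y_\alpha^j-y_0^j\bigr)_{1\le\alpha,j\le N}.
\]
Next I would identify this $N\times N$ determinant with a Vandermonde. Consider the $(N+1)\times(N+1)$ matrix $V=(y_i^j)_{0\le i,j\le N}$. Subtracting row $0$ from each of the other rows (which preserves the determinant) yields a matrix whose first column is $(1,0,\ldots,0)^T$; cofactor expansion along this column produces exactly $\det(y_\alpha^j-y_0^j)_{1\le\alpha,j\le N}=|V_{N_k}|$. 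Consequently, by the Vandermonde identity,
\[
|V_{N_k}| = \prod_{0\le i<j\le N}(y_j-y_i).
\]

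Finally, since $y_j-y_i=(j-i)\Delta_k$ is independent of $a_k$, I obtain
\[
|V_{N_k}| = \Delta_k^{N(N+1)/2}\prod_{m=1}^{N}m!,
\]
which is strictly positive and depends only on $N_k$ and $\Delta_k$, not on $a$. Invertibility of $V_{N_k}$ follows immediately. The only piece of insight is the telescoping-plus-bordering reduction to a Vandermonde; everything else is routine, so I expect no real obstacle.
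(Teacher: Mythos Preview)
Your proof is correct and follows essentially the same route as the paper: telescope the rows to entries $y_\alpha^j-y_0^j$, then border (equivalently, expand the $(N+1)\times(N+1)$ Vandermonde along its first column) to identify $|V_{N_k}|$ with $\prod_{0\le i<j\le N}(y_j-y_i)$. Your closed form $\Delta_k^{N(N+1)/2}\prod_{m=1}^{N}m!$ is in fact sharper than the paper's stated expression, which carries only a single factor of $\Delta_k$.
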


\begin{proof}
Fix $k$ $(1\le k\le n)$. Writing $\gamma_k=\xi_k-a_k$, we get
\begin{equation} \label{determinant1}
	|V_{N_k}|
	= 
	\left|
	\begin{matrix}
	\left(\left(\gamma_{k}+\alpha_k\Delta_k\right)^j 
		- \left(\gamma_{k}+(\alpha_k-1)\Delta_k\right)^j\right)_{(k,j)}
	\end{matrix}\right|.  
\end{equation}
From the invariant properties of determinants, 
add the first row to the second, the second row to the third,
and so on until the end. Then
(\ref{determinant1}) equals
\begin{equation} \label{determinant}
	\arraycolsep=3pt\def\arraystretch{1.5}
	\left|\;
	\begin{matrix} 
	\Delta_k & \left(\gamma_k+\Delta_k\right)^2 - \gamma_k^2 & \cdots 
		& \left(\gamma_k+\Delta_k\right)^{N_k}- \gamma_k^{N_k} \\
	2\Delta_k & \left(\gamma_k+2\Delta_k\right)^2 - \gamma_k^2 & \cdots 
		& \left(\gamma_k+2\Delta_k\right)^{N_k} - \gamma_k^{N_k} \\
	\vdots & \vdots & \ddots & \vdots  \\
	 N_k\Delta_k & \left(\gamma_k+N_k\Delta_k\right)^2 - \gamma_k^2 & \cdots 
	 	& \left(\gamma_k+N_k\Delta_k\right)^{N_k} - \gamma_k^{N_k}\\
	\end{matrix}\right|  
\end{equation}
which is also equal to
\begin{equation} \label{determinant2}
	\arraycolsep=3pt\def\arraystretch{1.5}
	\left|\;
	\begin{matrix} 
	1  &  \gamma_k &  \gamma_k^2 & \cdots 
		&  \gamma_k^{N_k}  \\
	0  &  \Delta_k & \left(\gamma_k+\Delta_k\right)^2 - \gamma_k^2 & \cdots 
		& \left(\gamma_k+\Delta_k\right)^{N_k} - \gamma_k^{N_k} \\
	0  &  2\Delta_k & \left(\gamma_k+2\Delta_k\right)^2 - \gamma_k^2 & \cdots 
		& \left(\gamma_k+2\Delta_k\right)^{N_k} - \gamma_k^{N_k} \\
	\vdots  & \vdots & \vdots & \ddots & \vdots  \\
	0  &  N_k\Delta_k & \left(\gamma_k+N_k\Delta_k\right)^2 - \gamma_k^2 & \cdots 
		& \left(\gamma_k+N_k\Delta_k\right)^{N_k} - \gamma_k^{N_k} \\
	\end{matrix}\right| 
\end{equation}
by appending the first row and column of (\ref{determinant2}) to (\ref{determinant}).
By adding the first row of (\ref{determinant2}) to all other rows, we have
\begin{equation} \label{determinant3}
	\mbox{(\ref{determinant2})}= 
	\arraycolsep=3pt\def\arraystretch{1.5}
	\left|\;
	\begin{matrix} 
	1  &  \gamma_k &  \gamma_k^2 & \cdots 
		&  \gamma_k^{N_k}  \\
	1  &  \gamma_k+\Delta_k & \left(\gamma_k+\Delta_k\right)^2 & \cdots 
		& \left(\gamma_k+\Delta_k\right)^{N_k} \\
	1  &  \gamma_k+2\Delta_k & \left(\gamma_k+2\Delta_k\right)^2 & \cdots 
		& \left(\gamma_k+2\Delta_k\right)^{N_k} \\
	\vdots  & \vdots & \vdots & \ddots & \vdots  \\
	1  &  \gamma_k+N_k\Delta_k & \left(\gamma_k+N_k\Delta_k\right)^2 & \cdots 
		& \left(\gamma_k+N_k\Delta_k\right)^{N_k} \\
	\end{matrix}\right| 
\end{equation}
which is  
\begin{equation*} \label{determinant3-1}
 \Delta_k\prod_{0\le i<j\le N_k} (j-i) >0,
\end{equation*}
where the quantity is strictly positive for any $N_k$ and independent of $a$.  
Therefore, the proof is complete. 
\end{proof}

For notational simplicity, write $v_{R_{\alpha}\times I}=v_{\alpha}$ 
in (\ref{volume integration in poly})  and put
\begin{equation} \label{data}
	d_{\alpha}= \frac{1+n_y(f)}{2}|R_{\alpha}|\max I 
				+ \frac{1-n_y(f)}{2}|R_{\alpha}|\min I 
				- n_y(f) v_{\alpha}
\end{equation}
which comes only from $f$.	
By Lemma \ref{sign}, $d_\alpha$ can also be calculated as
\begin{equation} \label{2nd data}
	d_{\alpha}=\int_{R_{\alpha}}g\,dx.
\end{equation}	

Let $W$ be an $N_1\times\cdots\times N_n$ tensor 
whose $\alpha$th component is $\alpha_1\cdots\alpha_n$.
From now on, set $N=(N_1,\ldots,N_n)$.

%

\begin{theorem} \label{polynomial solution}
If $y=g(x)$ is a multivariate polynomial from $R$ to $I$ such that $f(x,y)=0$,  then 
for $a\in R$, $g(x)=\sum_{0\le \beta< N}c_{\beta}(x-a)^{\beta}$, where 
$c_{\beta}$ is the $(\beta+1)$th component of   
\begin{equation} \label{1st coefficients in poly}
	W\circ\Big[V_{N_n}^{-1}\stackrel{2\to n}{\cdot}
	\cdots
	V_{N_2}^{-1}\stackrel{2\to 2}{\cdot}
	V_{N_1}^{-1}\stackrel{2\to 1}{\cdot}
	\begin{pmatrix}d_{\alpha}\end{pmatrix}_\alpha\Big]\qquad(1\le\alpha\le N),
\end{equation}
$N_k>$ the largest exponent of $x_k$, and
$\circ$ denotes the Hadamard product between two tensors. 
\end{theorem}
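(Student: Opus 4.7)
The plan is to convert the theorem into the statement that the coefficient tensor $(c_\beta)$ is the unique solution of a multilinear tensor system in which the mode-wise coefficient matrices are the $V_{N_k}$ and the right-hand side is $(d_\alpha)$. First, because $g$ is polynomial on $R$ with $\deg_{x_k}g<N_k$, it admits a finite expansion about $a$, namely $g(x)=\sum_{0\le\beta<N}c_\beta(x-a)^\beta$, and by (\ref{2nd data}) one has $d_\alpha=\int_{R_\alpha}g\,dx$. Substituting the expansion reduces everything to evaluating the block integrals $\int_{R_\alpha}(x-a)^\beta\,dx$.

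Second, Fubini over the rectangle (\ref{block decomposition}) splits each such integral into $n$ one-dimensional pieces with explicit antiderivatives, giving
$$\int_{R_\alpha}(x-a)^\beta\,dx=\prod_{k=1}^n\frac{(\xi_k+\alpha_k\Delta_k-a_k)^{\beta_k+1}-(\xi_k+(\alpha_k-1)\Delta_k-a_k)^{\beta_k+1}}{\beta_k+1}.$$
After the index shift $j_k=\beta_k+1$, each numerator is exactly the $(\alpha_k,j_k)$-entry of $V_{N_k}$ from (\ref{vander}), so introducing the rescaled tensor $\tilde c$ with $\tilde c_{\beta+1}=c_\beta\big/\prod_k(\beta_k+1)$ rewrites the integration identity as the tensor equation
$$d=V_{N_1}\stackrel{2\to 1}{\cdot}V_{N_2}\stackrel{2\to 2}{\cdot}\cdots V_{N_n}\stackrel{2\to n}{\cdot}\tilde c$$
in the contraction notation introduced at the end of Section \ref{implicit functions}.

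Finally, Lemma \ref{invertibility} guarantees that each $V_{N_k}$ is invertible, and the commutativity of mode-wise contractions on distinct modes (recalled in the same notational passage) lets me apply $V_{N_k}^{-1}$ to mode $k$ independently, yielding $\tilde c=V_{N_n}^{-1}\stackrel{2\to n}{\cdot}\cdots V_{N_1}^{-1}\stackrel{2\to 1}{\cdot}d$. Undoing the rescaling multiplies the $(j_1,\ldots,j_n)$-entry of $\tilde c$ by $j_1\cdots j_n$, which is precisely the $(j_1,\ldots,j_n)$-entry of $W$; this is exactly the Hadamard product appearing in (\ref{1st coefficients in poly}). I expect the only non-trivial obstacle to be bookkeeping: keeping the off-by-one shift $j=\beta+1$ straight between coefficient and matrix-column indices, and verifying carefully that the successive mode-$k$ inversions can be composed in the displayed order so as to genuinely invert the forward system; both are routine once Lemma \ref{invertibility} and the commutativity identity are in hand.
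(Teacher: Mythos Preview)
Your proposal is correct and follows essentially the same route as the paper: write $d_\alpha=\int_{R_\alpha}g\,dx$ via (\ref{2nd data}), expand $g$ about $a$, evaluate the block integrals by Fubini to obtain the product of $V_{N_k}$-entries divided by $\beta_k+1$, recognise this as the mode-wise contraction system, invert each mode using Lemma~\ref{invertibility}, and recover $(c_\beta)$ via the Hadamard product with $W$. The only item present in the paper's proof that you omit is a short closing paragraph checking that the formula is independent of the particular choice of $N$ (any $N'\ge N$ yields the same nonzero entries, padded by zeros); this is implicit in your argument, since for every admissible $N$ the invertible system has as its unique solution the true Taylor coefficients of $g$.
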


In Theorem \ref{polynomial solution}, 
every component of (\ref{1st coefficients in poly}) vanishes if its index contains a number larger than the largest exponent of $x_k$ for some $k$.
Now we will call (\ref{1st coefficients in poly}) the coefficient tensor for $g$.

\begin{proof}[Proof of Theorem \ref{polynomial solution}]
For each $k$, take a sufficiently large $N_k$ so that $N_k>$ the largest exponent of $x_k$ in $g$.
By (\ref{block decomposition}), 
\begin{equation} \label{evaluation in poly}
\begin{split} 
	\int_{R_{\alpha}} g\,dx
		&= \sum_{0\le\beta< N}c_{\beta} \int_{R_{\alpha}}(x-a)^{\beta}\,dx \\
		&= \sum_{0\le\beta< N} 
			c_{\beta}\prod_{k=1}^n
			\frac{1}{\beta_k+1}
			\left[
			\left(\xi_{k}+\alpha_k\Delta_k-a_k\right)^{\beta_k+1} 
			- \left(\xi_{k}+(\alpha_k-1)\Delta_k-a_k\right)^{\beta_k+1} 
			\right].
\end{split}
\end{equation}	
By (\ref{2nd data}), the sum of (\ref{evaluation in poly}) is equal to $d_\alpha$.
Moreover, (\ref{evaluation in poly}) is the $\alpha$th component of 
\begin{equation} \label{evaluation poly in matrix}
	V_{N_1}\stackrel{2\to 1}{\cdot}
	V_{N_2}\stackrel{2\to 2}{\cdot}
	\cdots 
	V_{N_n}\stackrel{2\to n}{\cdot} 
	\begin{pmatrix}c_{\beta}/(\beta_1+1)\cdots(\beta_n+1)\end{pmatrix}_\beta,
\end{equation}
where $0\le\beta\le N-1$. Since $V_{N_k}$ is invertible by Lemma \ref{invertibility}, we have
\begin{equation*} \label{solution coeff1-1}
	\begin{pmatrix}c_{\beta}/(\beta_1+1)\cdots(\beta_n+1)\end{pmatrix}_\beta
	=
	V_{N_n}^{-1}\stackrel{2\to n}{\cdot}
	\cdots
	V_{N_2}^{-1}\stackrel{2\to 2}{\cdot}
	V_{N_1}^{-1}\stackrel{2\to 1}{\cdot}
	\begin{pmatrix}d_{\alpha}\end{pmatrix}_\alpha.
\end{equation*}
So,
\begin{equation} \label{solution coeff1}
	\begin{pmatrix}c_{\beta}\end{pmatrix}_\beta
	=
	W\circ \Big[V_{N_n}^{-1}\stackrel{2\to n}{\cdot}
	\cdots
	V_{N_2}^{-1}\stackrel{2\to 2}{\cdot}
	V_{N_1}^{-1}\stackrel{2\to 1}{\cdot}
	\begin{pmatrix}d_{\alpha}\end{pmatrix}_\alpha\Big].
\end{equation}

For the $N'$-partition of $R$ $(N'\ge N)$, according to (\ref{block decomposition}), (\ref{vander}), (\ref{data}), (\ref{2nd data}), and (\ref{solution coeff1}), let $\big(c_{\beta}'\big)_{\beta}$ be
the coefficient tensor for $g(x)=\sum_{0\le\beta<N'}c_\beta'(x-a)^\beta$ which satisfies $f=0$.  
From the uniqueness of the implicit function, the nontrivial components of two tensors 
should be identical, but 
they are different only in their sizes
by adding zero components.
This implies that $c_\beta$ does not depend on the choice of $N$ whenever $N_k$ is greater than the largest exponent of $x_k$ of $g(x)$.
Therefore, (\ref{solution coeff1}) results in the desired conclusion (\ref{1st coefficients in poly}).
\end{proof}

Since the necessary condition of the implicit function theorem with $m=1$ implies Assumption 1, 
as mentioned before, we have a corollary. 

\begin{corollary} \label{poly ift}
Suppose that $f$ is continuously differentiable such that $\partial_yf(a,b)\ne0$ with $f(a,b)=0$.
If $y=g(x)$ is a multivariate polynomial such that $f(x,y)=0$ near $(a,b)$, then there is an $R\times I$ so that for $a\in R$, $g(x)=\sum_{0\le \alpha< N}c_{\alpha}(x-a)^{\alpha}$ with the 
coefficient tensor of (\ref{1st coefficients in poly}).
\end{corollary}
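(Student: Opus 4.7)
The plan is to reduce the corollary to Theorem \ref{polynomial solution} by verifying Assumption 1 in a suitable rectangle around $(a,b)$. Under the stated hypotheses, the classical implicit function theorem (case $m=1$) produces a rectangle $R\times I\ni(a,b)$ together with a unique continuously differentiable $h:R\to I$ such that $f(x,h(x))=0$ on $R$. By shrinking $R\times I$ if needed, $\partial_y f$ keeps a constant sign throughout, so $y\mapsto f(x,y)$ is strictly monotone on $I$ for each $x\in R$, and hence $\sign_y f(x,y)$ has exactly one jump discontinuity on $I$. This is precisely Assumption 1 on $R\times I$, which is the point already noted by the author in the discussion following the statement of that assumption.

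Next, I would identify the polynomial $g$ furnished by the hypothesis with the implicit function treated in Theorem \ref{polynomial solution}. Since $g$ is continuous and satisfies $f(x,g(x))=0$ near $(a,b)$, the uniqueness clause of the classical implicit function theorem forces $g\equiv h$ on a sufficiently small rectangle; shrinking $R\times I$ once more so that the graph of $g$ lies inside, we find that $g$ is the unique continuous implicit function on $R\times I$. Theorem \ref{polynomial solution} now applies verbatim, yielding the expansion $g(x)=\sum_{0\le\alpha<N}c_\alpha (x-a)^\alpha$ with coefficient tensor given by (\ref{1st coefficients in poly}).

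No substantive obstacle is anticipated: the compatibility of the implicit function theorem with Assumption 1 is already argued in Section \ref{implicit functions}, the invertibility of the matrices $V_{N_k}$ appearing in (\ref{1st coefficients in poly}) is handled by Lemma \ref{invertibility}, and the uniqueness of the polynomial coefficients (independent of the partition $N$, provided each $N_k$ exceeds the largest exponent of $x_k$) has already been established in the proof of Theorem \ref{polynomial solution}. The only point to watch is to choose $R\times I$ small enough so that the hypotheses of the implicit function theorem hold, $\partial_y f$ has constant sign, and the graph of the polynomial $g$ is contained in it simultaneously; all three are local at $(a,b)$, so such a rectangle exists.
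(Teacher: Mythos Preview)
Your proposal is correct and follows exactly the route the paper takes: the author states the corollary as an immediate consequence of Theorem~\ref{polynomial solution}, noting only that the hypotheses of the implicit function theorem (with $m=1$) guarantee Assumption~1, as already argued in the discussion following that assumption. Your additional care in shrinking $R\times I$ so that $\partial_y f$ has constant sign and the graph of $g$ lies inside is a welcome clarification but introduces no new idea.
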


The following example is about a polynomial implicit function with two independent variables.

\begin{example}
Let $f:\mathbb{R}^3\to\mathbb{R}$ be given by
\begin{equation*}
\begin{split}
	f(x,y,z) &= 0.5x^4 + 0.5 x^3 y + 0.5 x^3 + 2 x^2 y + 0.5 x^2 z + 0.5 x y^2 \\
		   &\qquad	- 0.5 x y z + 1.5 x y - 0.5 x z + x + 1.5 y^2 - 0.5 y z + 2 y - z^2 + 3 z - 2
\end{split}		   
\end{equation*}
with $(2,0,-2)$ and $(1,1,4)$ at which $f$ vanishes. 
We want to solve $f=0$ for $z$ 
as a function of $x$ and $y$.

First, since $\partial_zf(2,0,-2)=8\ne0$,
by application of the implicit function theorem there is a rectangle of $(2,0,-2)$ on which
$\sign_yf(x,y)$ has the only one jump discontinuity.
Choose $R=[1.5,2.5]\times[-0.5,0.5] \ni (2,0)$ and $I=[-5,1] \ni -2$,
so that $n_z(f)=1$ in $R\times I$.
The surface of $f=0$ and $R\times I$ are  depicted in Figure \ref{fig1}.
With $N_1=N_2=3$, $V_{N_1}$, $V_{N_2}$ and $W$ are calculated as
\begin{equation*}
\arraycolsep=3.5pt\def\arraystretch{1.2}
		\begin{pmatrix}
		\Scale[0.8]{\Delta_x} & \Scale[0.8]{(-0.5+\Delta_x)^2-(-0.5)^2} 
			& \Scale[0.8]{(-0.5+\Delta_x)^3- (-0.5)^3} \\
		\Scale[0.8]{\Delta_x} & \Scale[0.8]{(-0.5+2\Delta_x)^2-(-0.5+\Delta_x)^2} 
			& \Scale[0.8]{(-0.5+2\Delta_x)^3- (-0.5+\Delta_x)^3} \\
		\Scale[0.8]{\Delta_x} & \Scale[0.8]{(-0.5+3\Delta_x)^2-(-0.5+2\Delta_x)^2} 
			& \Scale[0.8]{(-0.5+3\Delta_x)^3- (-0.5+2\Delta_x)^3} 
		\end{pmatrix}, 	
\end{equation*}
\begin{equation*}
\arraycolsep=3.5pt\def\arraystretch{1.2}
		\begin{pmatrix}
		\Scale[0.8]{\Delta_y} & \Scale[0.8]{(-0.5+\Delta_y)^2-(-0.5)^2} 
			& \Scale[0.8]{(-0.5+\Delta_y)^3- (-0.5)^3} \\
		\Scale[0.8]{\Delta_y} & \Scale[0.8]{(-0.5+2\Delta_y)^2-(-0.5+\Delta_y)^2} 
			& \Scale[0.8]{(-0.5+2\Delta_y)^3- (-0.5+\Delta_y)^3} \\
		\Scale[0.8]{\Delta_y} & \Scale[0.8]{(-0.5+3\Delta_y)^2-(-0.5+2\Delta_y)^2} 
			& \Scale[0.8]{(-0.5+3\Delta_y)^3- (-0.5+2\Delta_y)^3} 
		\end{pmatrix}, 	
\end{equation*}
and
\begin{equation*}
\begin{pmatrix}
		\Scale[0.8]{1\cdot 1} & \Scale[0.8]{1\cdot 2}  & \Scale[0.8]{1\cdot 3} \\
		\Scale[0.8]{2\cdot 1} & \Scale[0.8]{2\cdot 2}  & \Scale[0.8]{2\cdot 3} \\
		\Scale[0.8]{3\cdot 1} & \Scale[0.8]{3\cdot 2}  & \Scale[0.8]{3\cdot 3} \\
		\end{pmatrix},	
\end{equation*}
respectively. By  (\ref{data}), the matrix of $d_{(i,j)}$ is given by
\begin{equation*}
	\arraycolsep=2pt\def\arraystretch{1}
	\begin{pmatrix}
	\Scale[0.8]{-0.049896967860677}	& \Scale[0.8]{-0.136316792736493} & \Scale[0.8]{-0.222736578394150} \\
	\Scale[0.8]{-0.130143979013675} & \Scale[0.8]{-0.222736592404210} &  \Scale[0.8]{-0.315329196303353} \\
	\Scale[0.8]{-0.222736603139157} & \Scale[0.8]{-0.321502042403124} &  \Scale[0.8]{-0.420267478899573}
	\end{pmatrix}.
\end{equation*}
By Corollary \ref{poly ift}, the desired function $z=g(x,y)=\sum_{0\le\alpha<3}c_{\alpha}(x-2)^{\alpha_1}y^{\alpha_2}$ is determined by the coefficient matrix of
\begin{equation}  \label{approx1}
W\circ \Big[V_{N_2}^{-1}\stackrel{2\to 2}{\cdot}V_{N_1}^{-1} \stackrel{2\to 1}{\cdot}
		\begin{pmatrix}d_{\alpha}\end{pmatrix}_{\alpha} \Big] \\
=
\arraycolsep=2.pt\def\arraystretch{1.2}
\begin{matrix}
& \\
\left(\hspace{0.5em} \vphantom{ \begin{matrix} 12 \\ 12 \\ 12 \end{matrix} } \right .
\end{matrix} 
\hspace{-1.2em} 
\begin{matrix}
\Scale[0.8]{1} & \Scale[0.8]{y} & \Scale[0.8]{y^2}  \\ \hline \\[-4mm]
\Scale[0.8]{-2} & \Scale[0.8]{-2.5} &  \Scale[0.8]{0} \\
\Scale[0.8]{-2.500001} & \boxed{\Scale[0.8]{-0.499998}} & \Scale[0.8]{-2\times 10^{-6}} \\
\Scale[0.8]{-0.499999} & \Scale[0.8]{-3\times10^{-6}} & \Scale[0.8]{4\times 10^{-6}} 
\end{matrix}
\hspace{-0.2em}
\begin{matrix}
& \\
\left . \vphantom{ \begin{matrix} 12 \\ 12 \\ 12 \end{matrix} } \right )
\begin{matrix} 
\Scale[0.8]{1} \\ \Scale[0.8]{x-2}  \\ \Scale[0.8]{(x-2)^2} 
\end{matrix}
\end{matrix}  \\,
\end{equation}
where $-0.499998$, for example, denotes the coefficient of $(x-2)y$ in the summation of $g$.

On the other hand, since $\partial_zf(1,1,4)= -6\ne0$, 
by application of the implicit function theorem there is a rectangle, for example,  
$R=[0.5, 1.5]\times [0.5, 1.5]\ni(1,1)$ and $I=[2, 7]\ni4$, on which 
$n_z(f)=-1$.
For $N_1=N_2=4$ (as shown in Figure \ref{fig1}), we derive the coefficient matrix of
\begin{equation}  \label{approx2}
\arraycolsep=2.pt\def\arraystretch{1.0}
\begin{matrix}
& \\
\left(\hspace{0.5em} \vphantom{ \begin{matrix} 12 \\ 12 \\ 12 \\ 12 \end{matrix} } \right .
\end{matrix}
\hspace{-0.9em}
\begin{matrix}
\Scale[0.8]{1} & \Scale[0.8]{y-1} & \Scale[0.8]{(y-1)^2}  & \Scale[0.8]{(y-1)^3} \\ \arrayrulecolor{gray}\hline \\[-4mm]
\Scale[0.8]{4} & \Scale[0.8]{1.000001} & \Scale[0.8]{-1\cdot 10^{-6}} & \Scale[0.8]{1\cdot 10^{-6}} \\
\Scale[0.8]{2.000001} & \Scale[0.8]{-1\cdot 10^{-6}} & \Scale[0.8]{2\cdot 10^{-6}} & \Scale[0.8]{-2\cdot 10^{-6}} \\
\Scale[0.8]{0.999999} & \Scale[0.8]{2\cdot 10^{-6}} & \Scale[0.8]{-3\cdot 10^{-6}} & \Scale[0.8]{4\cdot 10^{-6}} \\
\Scale[0.8]{0} & \Scale[0.8]{-1\cdot 10^{-6}} & \Scale[0.8]{3\cdot 10^{-6}} & \Scale[0.8]{-4\cdot 10^{-6}}
\end{matrix}
\hspace{-0.0em}
\begin{matrix}
& \\
\left . \vphantom{ \begin{matrix} 12 \\ 12 \\ 12 \\ 12 \end{matrix} } \right )
\begin{matrix}
\Scale[0.8]{1} \\ \Scale[0.8]{x-1}  \\ \Scale[0.8]{(x-1)^2} \\ \Scale[0.8]{(x-1)^3} 
\end{matrix}
\end{matrix}  
\end{equation}
of $z=g(x,y)=\sum_{0\le\alpha<4}c_{\alpha}(x-1)^{\alpha_1}(y-1)^{\alpha_2}$ that satisfies $f=0$ in $R$ by the same method shown above.

In fact, $f$ is factorized by 
\begin{equation*}
f_1 = \tfrac52(x-2)+\tfrac52 y +z +2+\tfrac12(x-2)^2 +\tfrac12(x-2)y
\end{equation*}
and
\begin{equation*}
f_2 = 2(x-1)+(y-1) -z +4 +(x-1)^2.
\end{equation*}
The coefficient matrices of functions for $z$ such that $f_1=0$ and $f_2=0$, respectively, are shown as 
\begin{equation*}
\arraycolsep=10.pt\def\arraystretch{1.2}
\begin{matrix}
& \\
\left(\hspace{-0.2em} \vphantom{ \begin{matrix} 12 \\ 12 \\ 12 \end{matrix} } \right .
\end{matrix} 
\hspace{-1.2em} 
\begin{matrix}
\Scale[0.8]{1} & \Scale[0.8]{y} & \Scale[0.8]{y^2}  \\ \hline \\[-4mm]
\Scale[0.8]{-2} & \Scale[0.8]{-2.5} &  \Scale[0.8]{0} \\
\Scale[0.8]{-2.5} & \Scale[0.8]{-0.5} & \Scale[0.8]{0} \\
\Scale[0.8]{-0.5} & \Scale[0.8]{0} & \Scale[0.8]{0} 
\end{matrix}
\hspace{0.2em}
\begin{matrix}
& \\
\left . \vphantom{ \begin{matrix} 12 \\ 12 \\ 12 \end{matrix} } \right )
\begin{matrix} 
\Scale[0.8]{1} \\ \Scale[0.8]{x-2}  \\ \Scale[0.8]{(x-2)^2} 
\end{matrix}
\end{matrix}
\end{equation*}
and
\begin{equation}
\arraycolsep=6.pt\def\arraystretch{1.0}
\begin{matrix}
& \\
\left(\hspace{0.5em} \vphantom{ \begin{matrix} 12 \\ 12 \\ 12 \\ 12 \end{matrix} } \right .
\end{matrix}
\hspace{-0.9em}
\begin{matrix}
\Scale[0.8]{1} & \Scale[0.8]{y-1} & \Scale[0.8]{(y-1)^2}  & \Scale[0.8]{(y-1)^3} \\ \arrayrulecolor{gray}\hline \\[-4mm]
\Scale[0.8]{4} & \Scale[0.8]{1} & \Scale[0.8]{0} & \Scale[0.8]{0} \\
\Scale[0.8]{2} & \Scale[0.8]{0} & \Scale[0.8]{0} & \Scale[0.8]{0} \\
\Scale[0.8]{1} & \Scale[0.8]{0} & \Scale[0.8]{0} & \Scale[0.8]{0} \\
\Scale[0.8]{0} & \Scale[0.8]{0} & \Scale[0.8]{0} & \Scale[0.8]{0}
\end{matrix}
\hspace{-0.0em}
\begin{matrix}
& \\
\left . \vphantom{ \begin{matrix} 12 \\ 12 \\ 12 \\ 12 \end{matrix} } \right )
\begin{matrix}
\Scale[0.8]{1} \\ \Scale[0.8]{x-1}  \\ \Scale[0.8]{(x-1)^2} \\ \Scale[0.8]{(x-1)^3} 
\end{matrix}
\end{matrix}  
\end{equation}
which are comparable to (\ref{approx1}) and
(\ref{approx2}), respectively.
\begin{figure}[!ht]
\centerline{\includegraphics[clip, trim=0cm 8cm 0cm 8cm,width=0.70\textwidth]{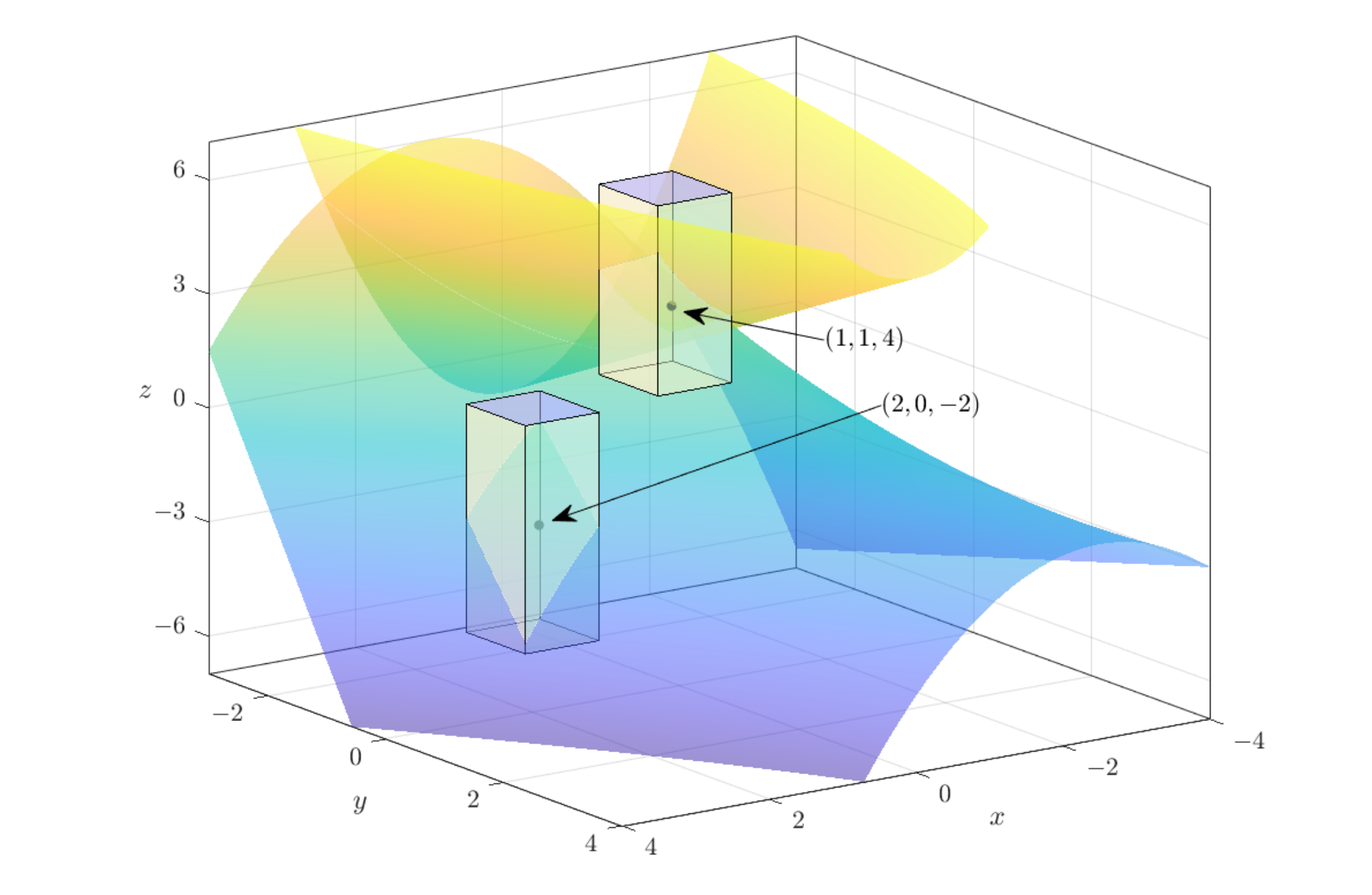}}
\vspace{-0.3cm}
\caption{Illustration showing that $f(x,y,z)=0$. Two rectangles of $[1.5,2.5]\times[-0.5,0.5]\times[-5,1]$ and $[0.5, 1.5]\times [0.5, 1.5]\times [2, 7]$ denote closed neighborhoods of $(2,0,-1)$ and $(1,1,4)$, respectively.}
\label{fig1}
\end{figure}

\end{example}

\section{Analytic implicit functions} \label{aif}

In this section, we derive a power series expansion of an implicit function which has one dependent variable, and any number of independent variables as before. 
The proposed method does not require the size estimate of Taylor coefficients
and even differentiability of $f$ as we have seen in Section 3.
We start with an assumption that the implicit function $y=g(x)$ for $f(x,y)=0$,
is analytic on a rectangle, which means that $g$ is analytic on some open neighborhood containing the rectangle.
By translation and dilation on the independent variables of $f$,
we may assume that the implicit function is analytic 
on $[-1-\delta,1+\delta]^n$ for some $\delta>0$
and write $g(x)=\sum_{\beta} c_\beta x^\beta$ 
which converges in $[-1-\delta,1+\delta]^n$, where the sum is the power series expansion of $g$ at the origin.   
Let $R=[-1,1]^n$ and put
\begin{equation*}
-1<-1+\Delta_x<-1+2\Delta_x<\cdots-1+(N-1)\Delta_x<-1+N\Delta_x=1 
\end{equation*}
be a partition on $[-1,1]$ 
$(\Delta_x=2/N)$. According to (\ref{data}) and (\ref{volume integration in poly}), prepare $d_{\alpha}$ from $f$ over  $R_\alpha\times I$, where
\begin{equation*}
	R_{\alpha}=\prod_{k=1}^n [-1+(\alpha_{k}-1)\Delta_x, -1+\alpha_k\Delta_x]
\end{equation*}
for $\alpha_k=1,2,\ldots,N$. 

We split $g$ into two parts: one is a partial sum in which the exponent of a monomial is dominated by an multi-index $N$, another is the remainder of the series of $g$, e.g.,
\begin{equation} \label{analytic series}
\begin{split}
	g(x)
	&=\sum_{0\le\beta< N}c_{\beta} x^{\beta} 
		+ \sum_{\text{remainder }\beta}c_{\beta} x^{\beta} \\
	&=g_N(x)+r_N(x),\quad\mbox{say}.
\end{split}	
\end{equation}
From analyticity, the series of $g$ converges uniformly and absolutely. There is a constant $C$ such that $|c_\beta x^\beta|\le C$ uniformly in $x\in [-1-\delta,1+\delta]^n$ for every $\beta$. 
So, $|c_\beta|(1+\delta)^{|\beta|}\le C$ for every $\beta$ (for the properties of real analytic functions of several variables, refer to \cite{kp2}).

In $r_N$ of (\ref{analytic series}), there is an index $\beta$ such that its component is greater than or equal to $N$. Assume that $\beta_1\ge N$. Then, for $x\in R$, 
\begin{equation} \label{vanish of tail}
\begin{split}
	\sum_{\text{remainder }\beta} |c_{\beta}||x^{\beta}|
		&\le C\sum_{\text{remainder }\beta} \frac{|x^{\beta}|}{(1+\delta)^{|\beta|}} \\
		&\le \sum_{\beta_1=N}^\infty\sum_{\substack{\beta_k=0\\k\ne1}}^\infty \frac{1}{(1+\delta)^{|\beta|}} \\
		&= \frac{1}{\delta^n(1+\delta)^{N-n}}
\end{split}
\end{equation}
which vanishes as $N\to\infty$.
Let $\epsilon>0$.
By the triangle inequality  and by (\ref{vanish of tail}), we have $\|r_N\|_{L^\infty(R)}<\epsilon$ and
%
\begin{equation} \label{errors2}
	\int_{R_\alpha}|r_N|\,dx \le |R_{\alpha}|\epsilon \le 2^n\epsilon
\end{equation}
for every sufficiently large $N$.

For a fixed $N$, we put $\tilde g_N(x)=\sum_{0\le\alpha<N}\tilde c_\alpha x^\alpha$, where 
$\tilde c_\alpha$ is calculated by solving
\begin{equation} \label{poly integration}
	\int_{R_\alpha}\tilde g_N\,dx = d_{\alpha}
\end{equation}
as the derivation of (\ref{1st coefficients in poly}).
By (\ref{2nd data}) and by (\ref{poly integration}),
\begin{equation} \label{pre_error}
\begin{split}
	\left|\int_{R_\alpha}g_N-\tilde g_N\,dx\right| 
	&=\left|\int_{R_\alpha}g_N\,dx - d_\alpha\right| \\
	&=\left|\int_{R_\alpha}g_N-g\,dx \right| \\
	&= \left|\int_{R_\alpha} r_N\,dx\right|,
\end{split}	
\end{equation}
where the last equality follows from (\ref{analytic series}). 
By the Riesz representation theorem for the Lebesgue spaces (the duality argument), we realize  the next lemma.

\begin{lemma} \label{uniform limit}
The function $\tilde g_N$ 
converges to $g$ weak-star in $L^\infty(R)$ as $N\to\infty$.
\end{lemma}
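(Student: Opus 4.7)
The plan is to invoke the duality $L^\infty(R) \cong (L^1(R))^*$ given by the Riesz representation theorem: weak-star convergence amounts to showing that $\int_R(\tilde g_N - g)\phi\,dx \to 0$ for every $\phi \in L^1(R)$. Fix such a $\phi$ and introduce the piecewise-constant approximation $\phi_N$ on $R$ defined by $\phi_N|_{R_\alpha} = |R_\alpha|^{-1}\int_{R_\alpha}\phi\,dx$. Since the mesh size $\Delta_x = 2/N$ tends to $0$, density of step functions in $L^1$ (equivalently, the Lebesgue differentiation theorem) yields $\|\phi - \phi_N\|_{L^1(R)} \to 0$.

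Next I would split
\[
\int_R(\tilde g_N - g)\phi\,dx = \int_R(\tilde g_N - g)\phi_N\,dx + \int_R(\tilde g_N - g)(\phi - \phi_N)\,dx.
\]
The first term vanishes identically: on each block $R_\alpha$, $\phi_N$ is the constant $\phi_N|_{R_\alpha}$, so by (\ref{poly integration}) combined with (\ref{2nd data}),
\[
\int_{R_\alpha}(\tilde g_N - g)\phi_N\,dx = \phi_N|_{R_\alpha}\Bigl(\int_{R_\alpha}\tilde g_N\,dx - \int_{R_\alpha} g\,dx\Bigr) = \phi_N|_{R_\alpha}(d_\alpha - d_\alpha) = 0,
\]
and summing over $\alpha$ gives zero. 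For the second term I would apply Hölder's inequality
\[
\left|\int_R(\tilde g_N - g)(\phi - \phi_N)\,dx\right| \le \|\tilde g_N - g\|_{L^\infty(R)}\,\|\phi - \phi_N\|_{L^1(R)},
\]
whose right-hand side tends to zero as soon as $\{\tilde g_N\}$ is bounded in $L^\infty(R)$ (since $g$ itself is bounded on $R$).

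The main obstacle is precisely this uniform $L^\infty$-bound on $\{\tilde g_N\}$. I would attack it through the decomposition $\tilde g_N = g_N + h_N$ of (\ref{analytic series}): the truncation $g_N$ is uniformly bounded on $R$ by the absolute convergence of the Taylor series of $g$ on $[-1-\delta,1+\delta]^n$, with the constant $C$ of (\ref{vanish of tail}), whereas $h_N$ is, by the uniqueness asserted in Theorem \ref{polynomial solution}, the unique polynomial of degree $< N$ in each variable whose block averages are $-\int_{R_\alpha}r_N\,dx$. These block averages are of size at most $|R_\alpha|\,\|r_N\|_{L^\infty(R)}$, hence exponentially small in $N$ by (\ref{vanish of tail}). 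The delicate quantitative step is that the reconstruction operator given by the inverse Vandermonde-type tensor in (\ref{1st coefficients in poly}) may itself have norm growing with $N$; the plan is to show that this growth is dominated by the exponential decay of $\|r_N\|_{L^\infty(R)}$, thereby keeping $\{h_N\}$ and hence $\{\tilde g_N\}$ uniformly bounded in $L^\infty(R)$ and closing the duality argument.
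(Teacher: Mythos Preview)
Your overall architecture is the paper's: exploit the identity $\int_{R_\alpha}\tilde g_N\,dx=d_\alpha=\int_{R_\alpha}g\,dx$, approximate a test function $\phi\in L^1(R)$ by step functions constant on the grid blocks, and invoke the Riesz duality $(L^1)^*=L^\infty$. Your observation that the first term in the splitting vanishes \emph{identically} is in fact sharper than the paper's presentation, which passes through $g_N$ and $r_N$ via (\ref{pre_error}) and (\ref{errors2}) and writes an inequality with absolute values inside the integral that is not obviously justified by (\ref{pre_error}).

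The genuine gap in your proposal is the uniform $L^\infty(R)$ bound on $\{\tilde g_N\}$, which you correctly flag as ``the main obstacle'' but do not actually establish. Density of step functions in $L^1$ together with $\int_{R_\alpha}(\tilde g_N-g)\,dx=0$ is \emph{not} enough to conclude weak-star convergence: one needs either that control for a \emph{fixed} dense family independent of $N$ (here the grid changes with $N$), or a uniform bound so that the H\"older step $\|\tilde g_N-g\|_{L^\infty}\|\phi-\phi_N\|_{L^1}$ is meaningful. Your plan---bound $\|h_N\|_{L^\infty}$ by the operator norm of the reconstruction (\ref{1st coefficients in poly}) times the exponentially small data $\int_{R_\alpha}r_N$---is the right shape, but the quantitative step is delicate and not carried out. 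The matrices $V_N$ are built from equispaced nodes, and inverse Vandermonde-type operators on equispaced grids are notoriously ill-conditioned, with norms that can grow exponentially in $N$; whether this growth is beaten by the decay $(1+\delta)^{-N}$ from (\ref{vanish of tail}) depends on $\delta$ and is not automatic. Until that comparison is made precise, the argument does not close. It is worth noting that the paper's own proof is equally terse at this point: it invokes density and duality without addressing uniform boundedness, so the difficulty you have isolated is real and is not resolved in the text either.
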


\begin{proof}
We recall (\ref{pre_error}).
By the triangle inequality,
\begin{equation} \label{weak-star}
\int_{R_\alpha}|\tilde g_N-g|dx\le\int_{R_\alpha}|\tilde g_N-g_N|dx + \int_{R_\alpha}|r_N|dx
\to0
\end{equation}
uniformly on $\alpha$ as $N\to\infty$. 
Here, the convergence of (\ref{weak-star}) follows from (\ref{errors2}).
Since the collection of finite linear combinations of characteristic functions $\chi_{R_\alpha}$ which are supported on grid blocks $R_\alpha$ is dense in $L^1(R)$,
the duality argument (\cite{folland,rudin}) from (\ref{pre_error}) and (\ref{weak-star}) for Lebesgue spaces yield that
$\tilde g_N$ goes to $g$ weak-star in $L^\infty(R)$ as $N\to\infty$. 
Therefore, the proof is complete.
\end{proof}

Let $V_N$ and $W$ be a matrix and tensor as in Theorem \ref{polynomial solution} with $V_N=V_{N_k}$ and $N= N_k$. By Lemma \ref{uniform limit}, we conclude the main theorem of this section. 

\begin{theorem}  \label{analytic solution}
If $y=g(x)$ is analytic on $R$ to $I$ such that $f(x,y)=0$, then
$\tilde g_N(x)=\sum_{0\le\alpha<N}\tilde c_\alpha x^\alpha$ with the coefficient tensor of
\begin{equation} \label{coefficients in poly}
	W_N\circ\Big[V_{N}^{-1}\stackrel{2\to n}{\cdot}
	\cdots
	V_{N}^{-1}\stackrel{2\to 2}{\cdot}
	V_{N}^{-1}\stackrel{2\to 1}{\cdot}
	\begin{pmatrix}
	d_\alpha
	\end{pmatrix}_\alpha\Big],
\end{equation}
converges to $g$ weak-star in $L^\infty(R)$ as $N\to\infty$.
\end{theorem}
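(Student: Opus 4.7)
The plan is to combine two ingredients that have essentially already been supplied: the algebraic identification of the coefficients from Theorem \ref{polynomial solution}, and the weak-star convergence established in Lemma \ref{uniform limit}. By Lemma \ref{invertibility}, the polynomial $\tilde g_N$ specified by the interpolation conditions $\int_{R_\alpha}\tilde g_N\,dx=d_\alpha$ (equation (\ref{poly integration})) is uniquely determined, so two tasks remain: (i) identify its coefficient tensor with the closed form (\ref{coefficients in poly}); and (ii) pass to the limit $N\to\infty$ in the weak-star sense.

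For (i), I would rerun the computation from the proof of Theorem \ref{polynomial solution}, specialized to $R=[-1,1]^n$, $a=0$, $N_1=\cdots=N_n=N$, and $\Delta_1=\cdots=\Delta_n=2/N$. Writing $\tilde g_N(x)=\sum_{0\le\beta<N}\tilde c_\beta x^\beta$ and factoring $\int_{R_\alpha}\tilde g_N\,dx$ into a product of one-dimensional integrals expresses $(d_\alpha)_\alpha$ as
\[
V_N\stackrel{2\to 1}{\cdot}V_N\stackrel{2\to 2}{\cdot}\cdots V_N\stackrel{2\to n}{\cdot}\bigl(\tilde c_\beta\,/\,\textstyle\prod_{k=1}^n(\beta_k+1)\bigr)_\beta.
\]
Since each $V_N$ is invertible by Lemma \ref{invertibility}, I can solve for the normalized coefficient tensor and then take the Hadamard product with $W_N$ (whose component at index $\beta+1$ equals $\prod_{k=1}^n(\beta_k+1)$); this reproduces exactly (\ref{coefficients in poly}).

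For (ii), Lemma \ref{uniform limit} applies verbatim and yields $\tilde g_N\to g$ weak-star in $L^\infty(R)$, so nothing further is needed.

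The conceptual core was therefore dispatched in Lemma \ref{uniform limit}: analyticity of $g$ on a slightly enlarged rectangle controls the remainder $r_N$ uniformly through (\ref{vanish of tail}), and then the duality argument against step functions of the form $\sum c_\alpha \chi_{R_\alpha}$, which are dense in $L^1(R)$, delivers weak-star convergence in $L^\infty(R)$. In the present theorem the only genuinely new step is to recognize (\ref{poly integration}) as the same linear system treated in Theorem \ref{polynomial solution}; this is a purely formal manipulation, and the main obstacle would merely be to keep the tensor-contraction bookkeeping straight across the $n$ modes.
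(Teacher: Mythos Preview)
Your proposal is correct and matches the paper's approach exactly: the paper also defines $\tilde g_N$ by the interpolation conditions (\ref{poly integration}), identifies its coefficient tensor with (\ref{coefficients in poly}) by rerunning the linear-algebra computation of Theorem \ref{polynomial solution} (now with $a=0$, $R=[-1,1]^n$, and all $N_k=N$), and then invokes Lemma \ref{uniform limit} for the weak-star convergence. Indeed, the paper's formal proof of the theorem is a single sentence citing Lemma \ref{uniform limit}, with the algebraic identification already absorbed into the surrounding discussion.
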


As we have seen in Corollary \ref{poly ift}, 
the next corollary follows.

\begin{corollary}\label{analytic cor}
Suppose that $f$ is continuously differentiable such that $\partial_yf(0,0)\ne0$ with $f(0,0)=0$.
If $y=g(x)$ is analytic on $R$ to $I$ such that $f(x,y)=0$, then
then $\tilde g_N$ converges to $g$ weak-star in $L^\infty(R)$ as $N\to\infty$.
\end{corollary}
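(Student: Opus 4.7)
The plan is to reduce Corollary \ref{analytic cor} to Theorem \ref{analytic solution} by verifying that the classical implicit function theorem hypotheses automatically supply Assumption 1 on a suitable sub-rectangle, so that no new analytical ingredient is required.

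First I would apply the classical implicit function theorem at $(0,0)$: since $f$ is continuously differentiable with $f(0,0)=0$ and $\partial_y f(0,0)\ne 0$, continuity of $\partial_y f$ lets me choose a neighborhood $R'\times I'$ of $(0,0)$ on which $\partial_y f$ keeps a fixed nonzero sign. For each $x\in R'$ the slice $y\mapsto f(x,y)$ is then strictly monotone on $I'$, so $\sign_y f(x,\cdot)$ has exactly one jump discontinuity on $I'$. This is precisely Assumption 1, mirroring the observation already used for Corollary \ref{poly ift} that the sufficient conditions of the implicit function theorem imply Assumption 1.

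Next I would exploit uniqueness of the continuous implicit function: the analytic $g$ of the hypothesis must coincide on $R\cap R'$ with the implicit function produced by the classical theorem. After shrinking to a sub-rectangle $R''\times I''\subset (R\cap R')\times (I\cap I')$ containing the origin and performing the affine translation/dilation used at the start of Section \ref{aif}, I may assume $R''=[-1,1]^n$, that $g$ remains analytic on a strict enlargement $[-1-\delta,1+\delta]^n$ for some $\delta>0$, and that Assumption 1 still holds on $R''\times I''$. All hypotheses of Theorem \ref{analytic solution} are then met, so that result applies verbatim to give weak-star convergence of the $\tilde g_N$ produced by the coefficient tensor \eqref{coefficients in poly} to $g$ in $L^\infty(R'')$.

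I do not expect any substantive analytical obstacle; the only point requiring care is bookkeeping, namely aligning the rectangle $R$ named in the corollary with the (possibly smaller) rectangle on which the implicit function theorem guarantees Assumption 1, and then transporting the conclusion through the standard rescaling used in Section \ref{aif}. Once this matching is carried out, the corollary is an immediate application of Theorem \ref{analytic solution}.
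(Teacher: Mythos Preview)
Your proposal is correct and matches the paper's approach: the paper does not give a separate proof but simply remarks, ``As we have seen in Corollary~\ref{poly ift}, the next corollary follows,'' relying on the observation (stated after Assumption~1) that the implicit function theorem hypotheses force $\sign_y f(x,\cdot)$ to have a unique jump, so Assumption~1 holds and Theorem~\ref{analytic solution} applies directly. Your write-up spells out the same reduction with more care about shrinking the rectangle and the rescaling, but there is no methodological difference.
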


If $f$ is analytic such that $\partial_yf(0,0)\ne0$ with $f(0,0)=0$, then by the analytic version of the implicit function theorem, 
Corollary \ref{analytic cor} holds. We now present a numerical approximation of an analytic implicit function.

\begin{example} \label{sphere_ex}
Let $f:\mathbb{R}^3\to\mathbb{R}$ be given by
\begin{equation*}
\begin{split}
	f(x,y,z)=x^2 + y^2 + z^2 - 1
\end{split}
\end{equation*}	
with $f(0,0,1)=0$. 
We want to find a function of $x$ and $y$ that satisfies $f=0$ in some rectangle of $(0,0,1)$.
Since $f$ is analytic on a neighborhood of $(0,0,1)$ and $\partial_zf(0,0,1)=2>0$, 
by the analytic version of the implicit function theorem there is a rectangle of $(0, 0, 1)$ and 
a unique analytic implicit function for $z$ such that $f = 0$ in the rectangle.

Choose a rectangle $R\times I$, for example, $R=[-1/2, 1/2]\times[-1/2, 1/2]\ni(0,0)$ and $I=[0, 3/2]\ni1$, which are depicted in Figure \ref{fig2}.
In $R\times I$, readily $n_z(f)=1$ and with $N=6$, it follows that 
\begin{equation*}
V_{N} = 
		\arraycolsep=3.5pt\def\arraystretch{1.5}
		\begin{pmatrix}
		\Scale[0.8]{\Delta} & \Scale[0.8]{(-1/2+\Delta)^2-(-1/2)^2} & \Scale[0.8]{\cdots} & \Scale[0.8]{(-1/2+\Delta)^6-(-1/2)^6} \\
		\Scale[0.8]{\Delta} & \Scale[0.8]{(-1/2+2\Delta)^2-(-1/2+\Delta)^2} & \Scale[0.8]{\cdots} & \Scale[0.8]{(-1/2+2\Delta)^6-(-1/2+\Delta)^6} \\
		\Scale[0.8]{\vdots} & \Scale[0.8]{\vdots}  & \Scale[0.8]{\ddots} & \Scale[0.8]{\vdots} \\
		\Scale[0.8]{\Delta} & \Scale[0.8]{(-1/2+6\Delta)^2-(-1/2+5\Delta)^2} & \Scale[0.8]{\cdots} & \Scale[0.8]{(-1/2+6\Delta)^6-(-1/2+5\Delta)^6} \\
		\end{pmatrix},
\end{equation*}
where $\Delta=1/6$. From (\ref{data}),
the $6\times6$ matrix of $d_{i,j}$ is given by 
\begin{equation*}
\left(
\arraycolsep=1.7pt\def\arraystretch{0.9}
\begin{array}{cccccc}
\Scale[0.68]{\rotatebox{0}{$0.022341010165457$}} & \Scale[0.68]{\rotatebox{0}{$0.024192273341742$}} & \Scale[0.68]{\rotatebox{0}{$0.025065830482660$}} &
\Scale[0.68]{\rotatebox{0}{$0.025065830482660$}} & \Scale[0.68]{\rotatebox{0}{$0.024192273341742$}} & \Scale[0.68]{\rotatebox{0}{$0.022341010165457$}} \\
\Scale[0.68]{\rotatebox{0}{$0.024192273341742$}} & \Scale[0.68]{\rotatebox{0}{$0.025909540264518$}} & \Scale[0.68]{\rotatebox{0}{$0.026726310029805$}} &
\Scale[0.68]{\rotatebox{0}{$0.026726310029805$}} & \Scale[0.68]{\rotatebox{0}{$0.025909540264518$}} & \Scale[0.68]{\rotatebox{0}{$0.024192273341742$}} \\
\Scale[0.68]{\rotatebox{0}{$0.025065830482660$}} & \Scale[0.68]{\rotatebox{0}{$0.026726310029805$}} &\Scale[0.68]{\rotatebox{0}{$0.027518589197521$}} &
\Scale[0.68]{\rotatebox{0}{$0.027518589197521$}} & \Scale[0.68]{\rotatebox{0}{$0.026726310029805$}} & \Scale[0.68]{\rotatebox{0}{$0.025065830482660$}} \\
\Scale[0.68]{\rotatebox{0}{$0.025065830482660$}} & \Scale[0.68]{\rotatebox{0}{$0.026726310029805$}} & \Scale[0.68]{\rotatebox{0}{$0.027518589197521$}} &
\Scale[0.68]{\rotatebox{0}{$0.027518589197521$}} & \Scale[0.68]{\rotatebox{0}{$0.026726310029805$}} & \Scale[0.68]{\rotatebox{0}{$0.025065830482660$}} \\
\Scale[0.68]{\rotatebox{0}{$0.024192273341742$}} & \Scale[0.68]{\rotatebox{0}{$0.025909540264518$}} & \Scale[0.68]{\rotatebox{0}{$0.026726310029805$}} &
\Scale[0.68]{\rotatebox{0}{$0.026726310029805$}} & \Scale[0.68]{\rotatebox{0}{$0.025909540264518$}} & \Scale[0.68]{\rotatebox{0}{$0.024192273341742$}} \\
\Scale[0.68]{\rotatebox{0}{$0.022341010165457$}} & \Scale[0.68]{\rotatebox{0}{$0.024192273341742$}} & \Scale[0.68]{\rotatebox{0}{$0.025065830482660$}} &
\Scale[0.68]{\rotatebox{0}{$0.025065830482660$}} & \Scale[0.68]{\rotatebox{0}{$0.024192273341742$}} & \Scale[0.68]{\rotatebox{0}{$0.022341010165457$}}
\end{array}\right). 
\end{equation*}
By Corollary \ref{analytic cor}, the coefficient matrix of 
$z=\tilde g_N(x,y)=\sum_{0\le\alpha<6}c_{\alpha}x^{\alpha_1}y^{\alpha_2}$ which approximates $z=z(x,y)$ such that $f(x,y,z(x,y))=0$, is calculated as
\begin{equation} \label{app N=6} 
W\circ \Big[V_{N}^{-1}\stackrel{2\to 2}{\cdot}V_{N}^{-1} \stackrel{2\to 1}{\cdot}
		\begin{pmatrix}d_{\alpha}\end{pmatrix}_{\alpha} \Big] \\		
		= 
\arraycolsep=1pt\def\arraystretch{1}
\begin{matrix}
& \\
\left(\hspace{0.4em} \vphantom{ \begin{matrix} 12 \\ 12 \\ 12 \\ 12 \\ 12 \\ 12\end{matrix} } \right .
\end{matrix}
\hspace{-0.8em}
\begin{matrix}
\Scale[0.8]{1} & \Scale[0.8]{y} & \Scale[0.8]{y^2}  & \Scale[0.8]{y^3} & \Scale[0.8]{y^4}  & \Scale[0.8]{y^5} \\ \arrayrulecolor{gray}\hline \\[-3mm]
\Scale[0.8]{0.99997}  &  \Scale[0.8]{0}  &  \Scale[0.8]{-0.49880} & \Scale[0.8]{0}  & \Scale[0.8]{-0.14537}  &  \Scale[0.8]{0} \\
\Scale[0.8]{0} &  \Scale[0.8]{0} &  \Scale[0.8]{0} &  \Scale[0.8]{0}  & \Scale[0.8]{0}  &  \Scale[0.8]{0} \\
\Scale[0.8]{-0.49880}  &  \Scale[0.8]{0} &  \boxed{\Scale[0.8]{-0.24363}} &  \Scale[0.8]{0} &  \Scale[0.8]{-0.23108} &  \Scale[0.8]{0} \\
\Scale[0.8]{0} &  \Scale[0.8]{0} &  \Scale[0.8]{0} &  \Scale[0.8]{0}  & \Scale[0.8]{0}  &  \Scale[0.8]{0} \\
\Scale[0.8]{-0.14537}  &  \Scale[0.8]{0} &  \Scale[0.8]{-0.23108} &  \Scale[0.8]{0} &  \Scale[0.8]{-0.50484}  &  \Scale[0.8]{0} \\
\Scale[0.8]{0} &  \Scale[0.8]{0} &  \Scale[0.8]{0} &  \Scale[0.8]{0}  & \Scale[0.8]{0}  &  \Scale[0.8]{0}
\end{matrix}
\hspace{-0.2em}
\begin{matrix}
& \\
\left . \vphantom{ \begin{matrix} 12 \\ 12 \\ 12 \\ 12 \\ 12 \\ 12 \end{matrix} } \right )
\begin{matrix}
\Scale[0.8]{1} \\ \Scale[0.8]{x}  \\ \Scale[0.8]{x^2} \\ \Scale[0.8]{x^3} \\ \Scale[0.8]{x^4} \\ \Scale[0.8]{x^5}
\end{matrix}
\end{matrix},
\end{equation}
where $W=\big(ij\big)_{1\le i,j\le6}$
and, for example, $-0.24363$ in (\ref{app N=6}) is the coefficient of $x^2y^2$ a term of $g$. 
Both $\tilde g_N$ and $z(x,y)=(1-x^2-y^2)^{1/2}$ are plotted in Figure \ref{fig3}.

On the other hand, the coefficient matrix of the partial sum 
$g_N$ of the Taylor series of $z=(1-x^2-y^2)^{1/2}$ is as shown as
\begin{equation*} \label{taylor N=6} 
\arraycolsep=2.4pt\def\arraystretch{0.8}
\begin{matrix}
& \\
\left(\hspace{0.4em} \vphantom{ \begin{matrix} 12 \\ 12 \\ 12 \\ 12 \\ 12 \\ 12\end{matrix} } \right .
\end{matrix}
\hspace{-1.2em}
\begin{matrix}
\Scale[0.8]{1} & \Scale[0.8]{y} & \Scale[0.8]{y^2}  & \Scale[0.8]{y^3} & \Scale[0.8]{y^4}  & \Scale[0.8]{y^5} \\ \arrayrulecolor{gray}\hline \\[-3mm]
\Scale[0.8]{1}  &  \Scale[0.8]{0}  &  \Scale[0.8]{-0.5} & \Scale[0.8]{0}  & \Scale[0.8]{-0.125}  & \Scale[0.8]{0} \\
\Scale[0.8]{0} &  \Scale[0.8]{0} &  \Scale[0.8]{0} &  \Scale[0.8]{0}  & \Scale[0.8]{0}  &  \Scale[0.8]{0} \\
\Scale[0.8]{-0.5} &  \Scale[0.8]{0} &  \Scale[0.8]{-0.25} &  \Scale[0.8]{0} &  \Scale[0.8]{-0.1875} &  \Scale[0.8]{0} \\
\Scale[0.8]{0} &  \Scale[0.8]{0} &  \Scale[0.8]{0} &  \Scale[0.8]{0}  & \Scale[0.8]{0}  &  \Scale[0.8]{0} \\
\Scale[0.8]{-0.125}  &  \Scale[0.8]{0} &  \Scale[0.8]{-1875} &  \Scale[0.8]{0} &  \Scale[0.8]{-0.234375}  & \Scale[0.8]{0} \\
\Scale[0.8]{0} &  \Scale[0.8]{0} &  \Scale[0.8]{0} &  \Scale[0.8]{0}  & \Scale[0.8]{0}  &  \Scale[0.8]{0} \\
\end{matrix}
\hspace{-0.2em}
\begin{matrix}
& \\
\left . \vphantom{ \begin{matrix} 12 \\ 12 \\ 12 \\ 12 \\ 12 \\ 12 \end{matrix} } \right )
\begin{matrix}
\Scale[0.8]{1} \\ \Scale[0.8]{x}  \\ \Scale[0.8]{x^2} \\ \Scale[0.8]{x^3} \\ \Scale[0.8]{x^4} \\ \Scale[0.8]{x^5}
\end{matrix}
\end{matrix}.
\end{equation*}
As it is indicated on the graph of Figure \ref{fig4}, 
in the accuracy comparison between $\tilde g_N$ and $g_N$, 
the former is more accurate.
%
%
%
\begin{figure}[!ht]
\centerline{\includegraphics[clip, trim=0cm 8cm 0cm 8cm,width=0.70\textwidth]{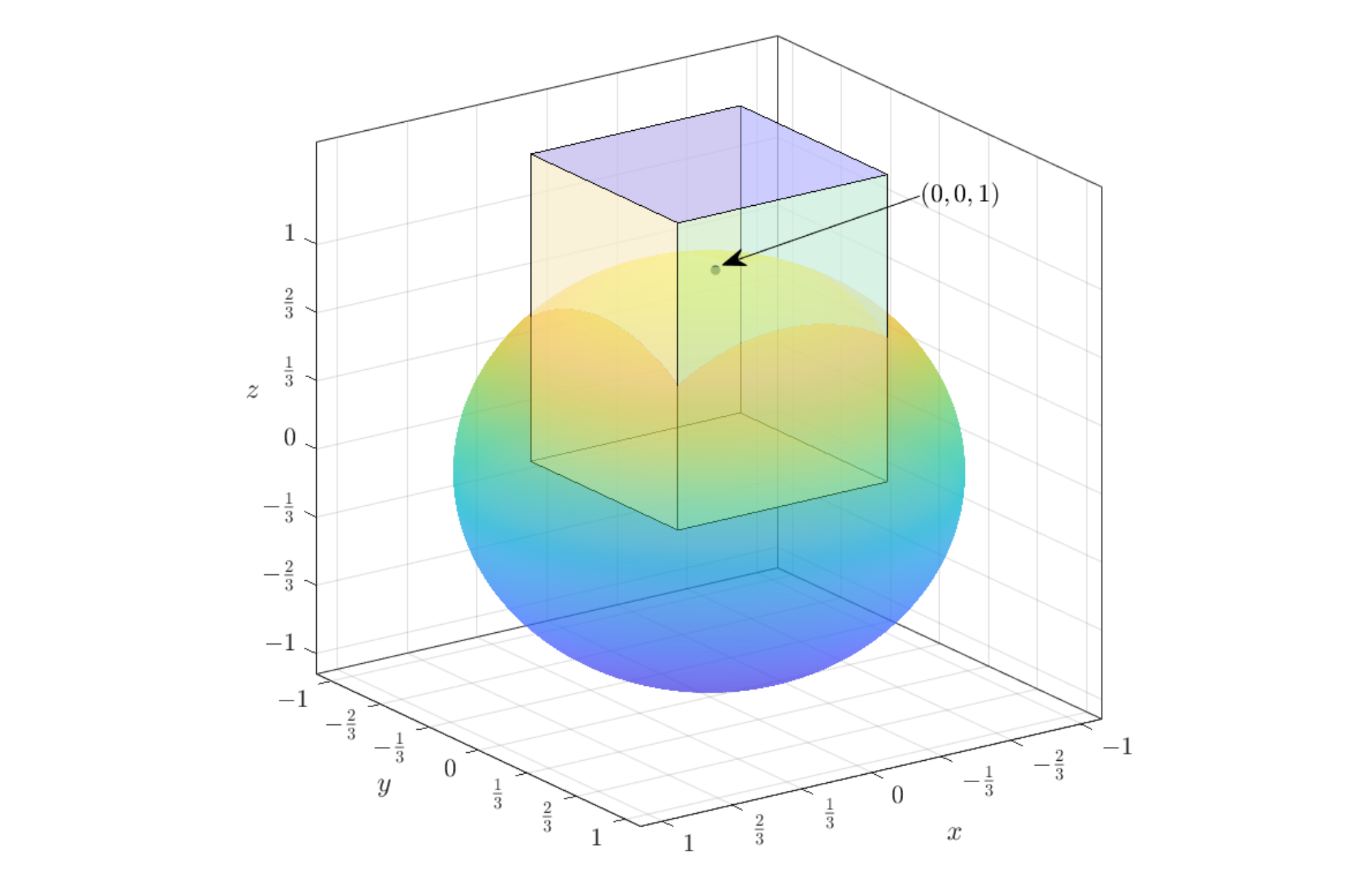}}
\vspace{-0.3cm}
\caption{Surface of $f(x,y,z)=x^2+y^2+z^2-1=0$. The rectangle $R\times I=[-0.5,0.5]\times[-0.5,0.5]\times[0,1.5]$ contains $(0,0,1)$.
}
\label{fig2}
\end{figure}
\begin{figure}[!ht] 
   \centering
     \makebox[\textwidth]{
    \subfloat[$z=\tilde g_N(x,y)$ with $N=6$]%
{\includegraphics[clip, trim=0cm 8cm 0cm 8cm,width=0.55\textwidth]{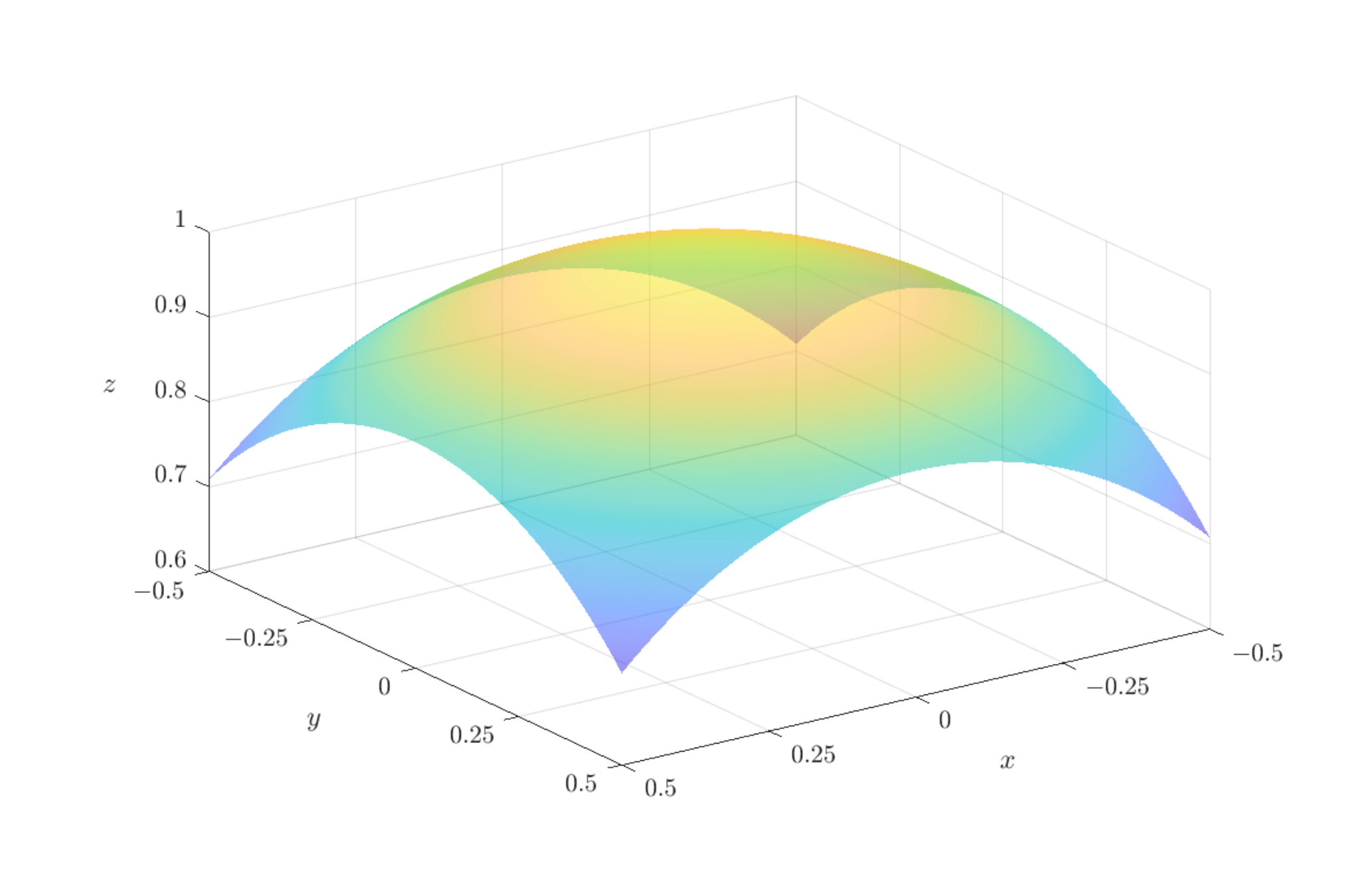} } \kern-0.4cm%
    \subfloat[$z(x,y)=(1-x^2-y^2)^{1/2}$]%
{\includegraphics[clip, trim=0cm 8cm 0cm 8cm,width=0.55\textwidth]{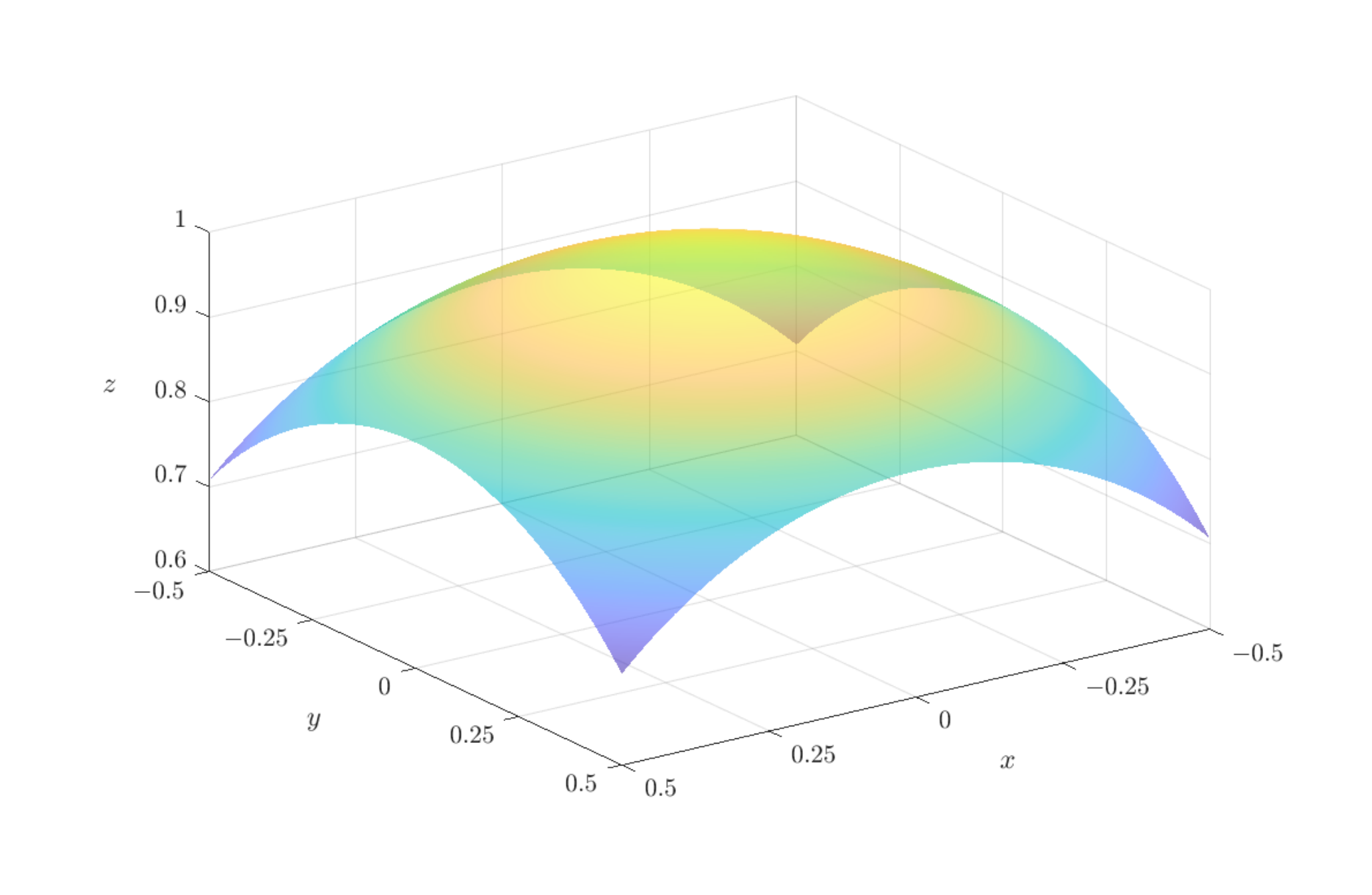} }%
    }
\vspace{-0,2cm}    
\caption{The comparison of $z=\tilde g_N(x,y)$ and $z=z(x,y)$ in $[-0.5,0.5]\times[-0.5,0.5]$.}%
    \label{fig3}%
\end{figure}
\begin{figure}[!ht]
\centerline{\includegraphics[clip, trim=0cm 8cm 0cm 8cm,width=0.70\textwidth]{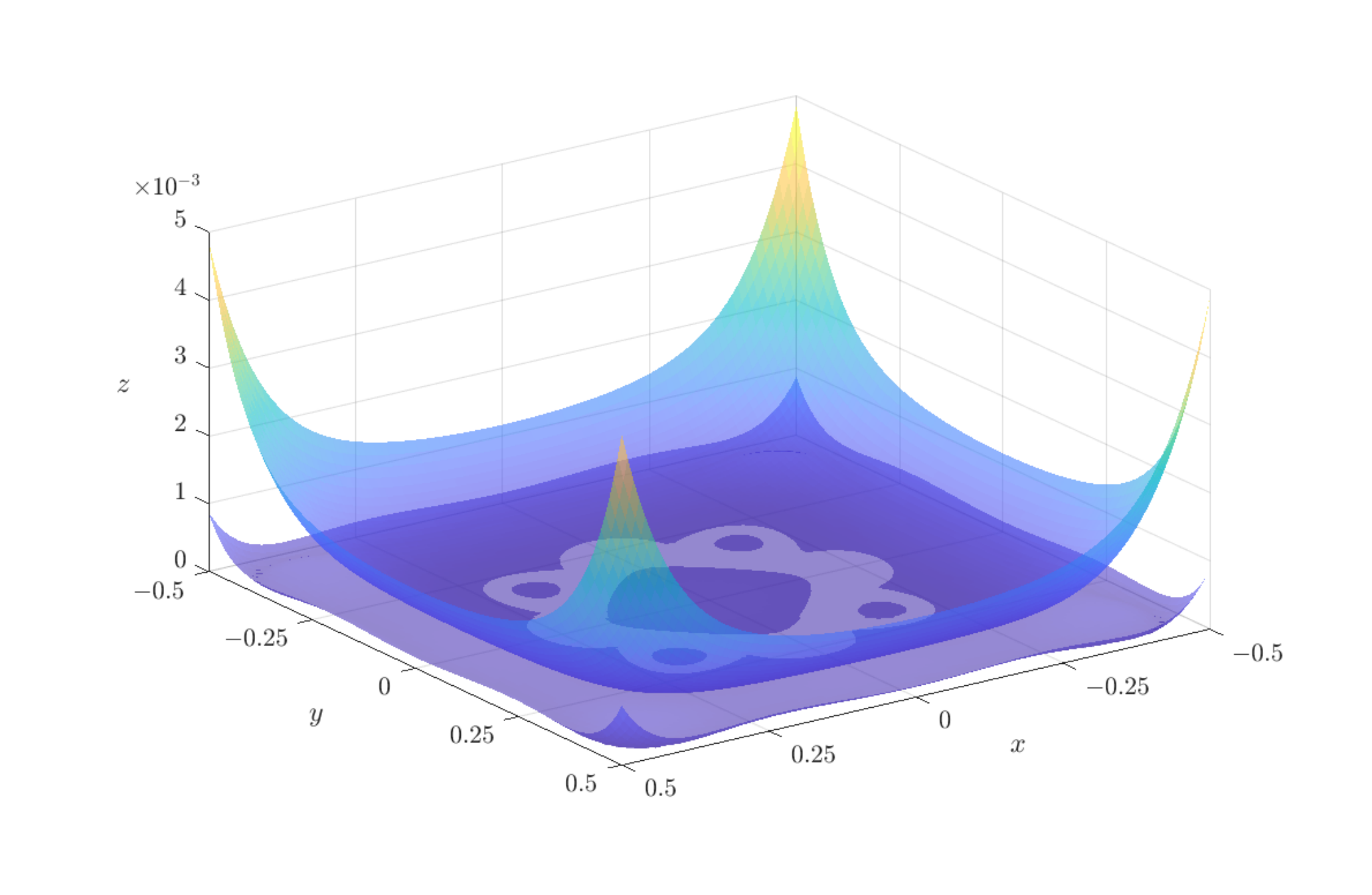}}
\vspace{-0.8cm}
\caption{ The comparison of $|z(x,y)-g_N(x,y)|$ and
$|z(x,y)-\tilde g_N(x,y)|$. 
The former is drawn on the top side and the latter on the bottom side.
}
\label{fig4}
\end{figure}
\end{example}

In general, the implicit function and inverse function theorems can be thought of as equivalent formulations of a similar basic idea. The next example leads to the polynomial approximation of an inverse function for Kepler’s equation.
When an initial value is given, the methods using iterative calculation of a trajectory that satisfies  Kepler's equation are widely used in practice. Although the methods using the formula by a function are very useful, which are formal infinite sums or depend on the expansion method of the Taylor series, there is no proper approach to achieve high-performance computing.
%
The following example provides 
more accurate numerical values than the other results on Kepler's equation.

\begin{example} \label{kepler's equation}
Let $M=E-\epsilon\sin(E)$ be the 
standard Kepler equation which fixes $(\pi,\pi)$ for every $0\le \epsilon\le1$, where
$M$, $E$, and $\epsilon$ are the mean anomaly, the eccentric anomaly, and the eccentricity, respectively.
Kepler's equation has a unique inverse function for every eccentricity, 
since $M$ is increasing monotonically.

Let $K(E,M)=E-\epsilon\sin(E)-M:\mathbb{R}^2\to\mathbb{R}$.
The Taylor series does not give a good approximation of the inverse function (the implicit function for $E$ of $K=0$) in the open neighborhood of $M=0$ and $\pi$, especially when $\epsilon=1$ because $\partial_EK$ vanishes.
(To approximate the inverse function, for example, \cite{kp} adopts the Lagrange inverse theorem
to obtain a formal series expansion and \cite{mat} 
provides the usability of a solution of Kepler's equation for $E$ as the most recent result.)

Choose a rectangle of $(\pi,\pi)$, for example, $[0,2\pi]\times[-\pi,3\pi]$ on which $n_E(K)=1$.
By Theorem \ref{analytic solution}, for $N=28$
we have an approximation of $E$ by $\tilde E(M)=\sum_{n=0}^{N-1}c_{n}(M-\pi)^n$ calculated as
\begin{equation*}
	\tilde E(M)=
\left(
\arraycolsep=1.5pt\def\arraystretch{2}
\begin{array}{cccccccccccccccccccccccccccc}
\Scale[0.8]{\rotatebox{90}{$3.14159265358979$}} &
\Scale[0.8]{\rotatebox{90}{$0.500000286048401{ }^{ }$}} &
\Scale[0.8]{\rotatebox{90}{$-6.39953880001189\times 10^{-13}$}} &
\Scale[0.8]{\rotatebox{90}{$0.0104167098436714$}} &
\Scale[0.8]{\rotatebox{90}{$3.52445662156362\times 10^{-12}$}} &
\Scale[0.8]{\rotatebox{90}{$0.000520826826664063$}} &
\Scale[0.8]{\rotatebox{90}{$-8.33322713405475\times 10^{-12}$}} &
\Scale[0.8]{\rotatebox{90}{$3.35293978405063\times 10^{-5}$}} &
\Scale[0.8]{\rotatebox{90}{$8.99171081830078\times 10^{-12}$}} &
\Scale[0.8]{\rotatebox{90}{$1.71069043293255\times 10^{-6}$}} &
\Scale[0.8]{\rotatebox{90}{$-6.35421688275359\times 10^{-12}$}} &
\Scale[0.8]{\rotatebox{90}{$1.35045779641222\times 10^{-6}$}} &
\Scale[0.8]{\rotatebox{90}{$9.47892775420489\times 10^{-13}$}} &
\Scale[0.8]{\rotatebox{90}{$-1.13558515793038\times 10^{-6}$}} &
\Scale[0.8]{\rotatebox{90}{$3.97758303345101\times 10^{-13}$}} &
\Scale[0.8]{\rotatebox{90}{$7.16518651568157\times 10^{-7}$}} &
\Scale[0.8]{\rotatebox{90}{$-3.64509053298631\times 10^{-13}$}} &
\Scale[0.8]{\rotatebox{90}{$-2.89790511992230\times 10^{-7}$}} &
\Scale[0.8]{\rotatebox{90}{$1.17074864690306\times 10^{-13}$}} &
\Scale[0.8]{\rotatebox{90}{$7.74501977110289\times 10^{-8}$}} &
\Scale[0.8]{\rotatebox{90}{$-1.92321275158235\times 10^{-14}$}} &
\Scale[0.8]{\rotatebox{90}{$-1.34780939066348\times 10^{-8}$}} &
\Scale[0.8]{\rotatebox{90}{$2.14518516845533\times 10^{-15}$}} &
\Scale[0.8]{\rotatebox{90}{$1.46671954613640\times 10^{-9}$}} &
\Scale[0.8]{\rotatebox{90}{$-1.16947164649083\times 10^{-16}$}} &
\Scale[0.8]{\rotatebox{90}{$-9.03766696253492\times 10^{-11}$}} &
\Scale[0.8]{\rotatebox{90}{$3.13882342334049\times 10^{-18}$}} &
\Scale[0.8]{\rotatebox{90}{$2.40613536296425\times 10^{-12}$}}
\end{array}\right)_{\kern-4pt \Scale[0.6]{1\!\times\!28}}\kern-4pt
	\arraycolsep=2pt\def\arraystretch{1.3}
	\begin{pmatrix}
	\Scale[0.8]{1}  \\ \Scale[0.8]{M-\pi} \\ \Scale[0.8]{(M-\pi)^2} \\ \Scale[0.8]{\vdots} \\ 
	\Scale[0.8]{(M-\pi)^{26}} \\ \Scale[0.8]{(M-\pi)^{27}}
	\end{pmatrix}_{\kern-4pt \Scale[0.6]{28\!\times\!1}}.  
\end{equation*}

In Figures \ref{figk1} and \ref{figk2},
$\tilde E$ and $K(M,\tilde E(M))$ are shown for $\epsilon=0.8$, $0.9$, and $1$
(for comparison with related work, e.g., refer to \cite{RP,mat}).
\begin{figure}[!ht]
\centerline{\includegraphics[clip, trim=0cm 7.5cm 0cm 7.5cm,width=0.70\textwidth]{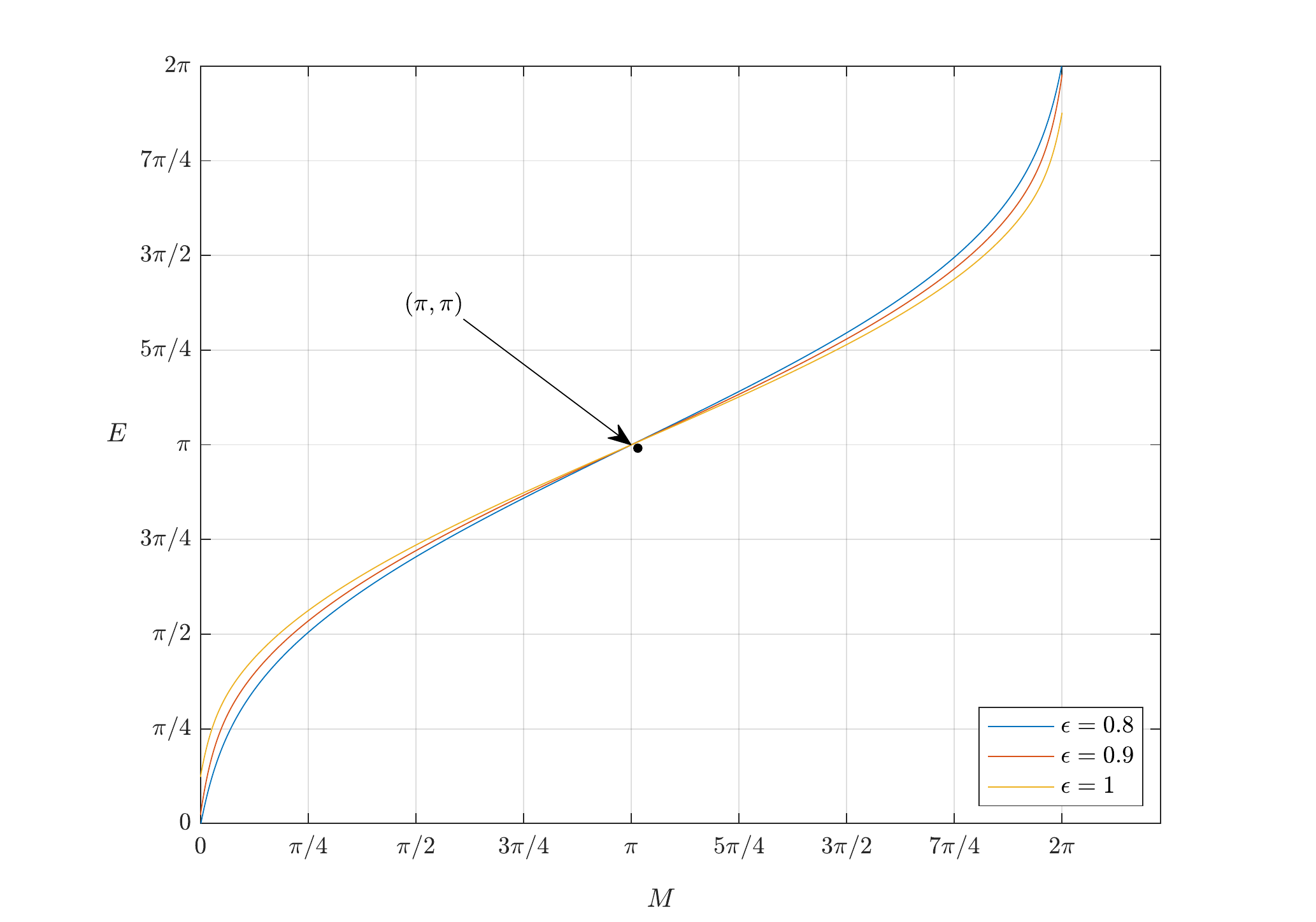}}
\caption{Polynomial approximation $\tilde E=\sum_{n=0}^{27}c_n(M-\pi)^n$ for the implicit function for $E$ of $K=0$ in $[0, 2\pi]$ with $\epsilon=0.8$, $0.9$, and $1$.}
\label{figk1}
\end{figure}
\begin{figure}[!ht]
\centerline{\includegraphics[clip, trim=0cm 7.5cm 0cm 7.5cm, width=0.70\textwidth]{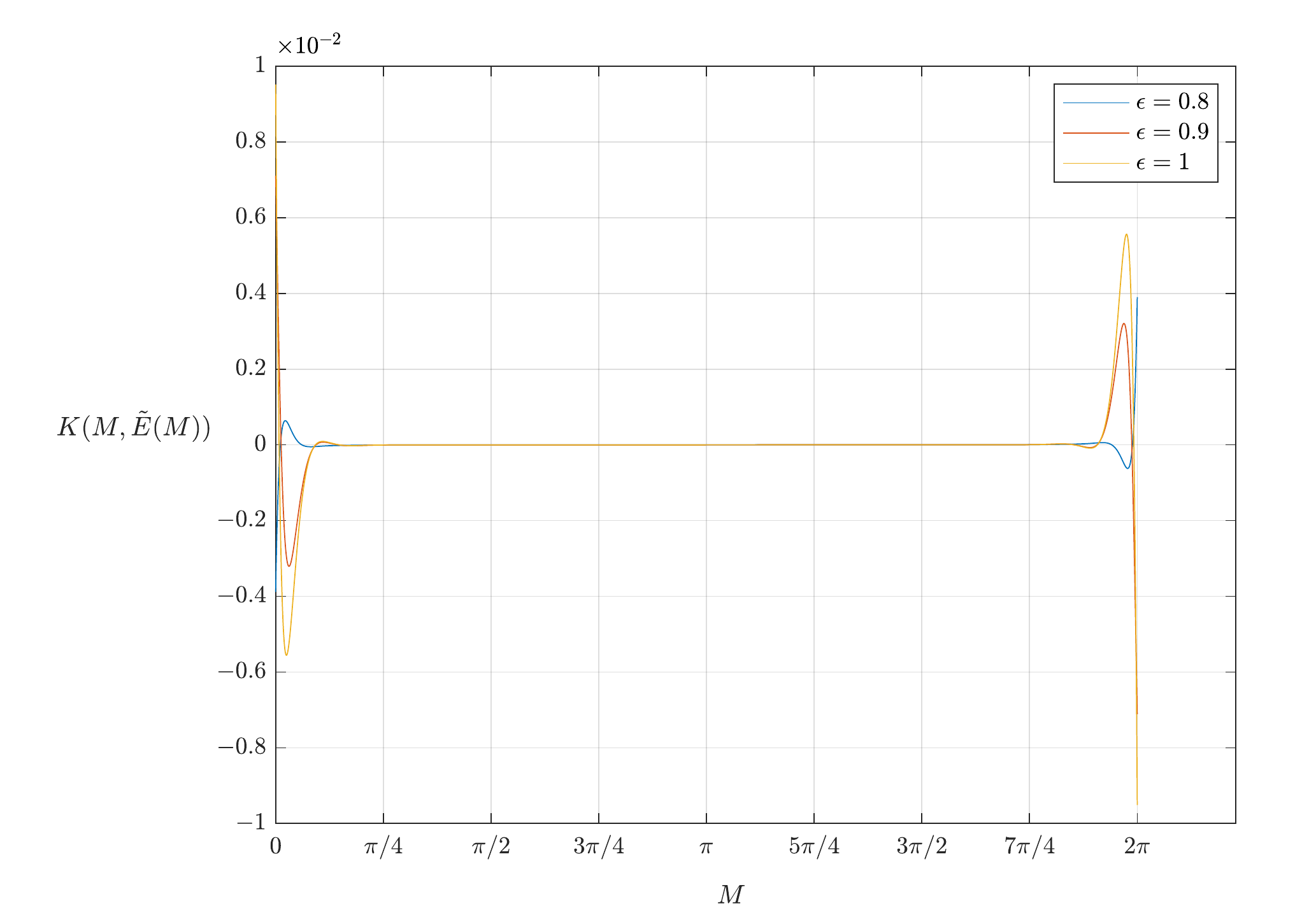}}
\caption{The non-vanishing values of $K(M,\tilde E(M))$ denote errors. 
}
\label{figk2}
\end{figure}
\end{example}

From Example \ref{kepler's equation}, we have seen that the inverse function for $y=f(x)$
is obtained by finding an implicit function for $x$ such that $F(x,y)=0$
after setting $F(x,y)=y-f(x)$. For a system of implicit equations, Theorem \ref{system of analytic solutions} will be applicable for constructing a vector-valued inverse function.
%

\section{System of analytic implicit functions}

In this section, we extend the results for a real-valued function in Section 4 to a vector-valued function.
We will prove it 
by a variable-reduction technique which is the process of eliminating the dependent variables one by one.
Let $\phi$ be a permutation on $\{1,2,\ldots,m\}$ that consists of all axis numbers for dependent variables and  put $I=\prod_{i=1}^m I_i\subset\mathbb{R}^m$, where $I_i$ is an interval. 
For a vector $b$ and rectangle $I$, the notations $b^{\{j\}}$ and $I^{\{j\}}$ denote the $j$th component and edge deletions of $b$ and $I$, respectively. 

\begin{assumption2}
For a continuous vector-valued function $f:\mathbb{R}^{n+m}\to\mathbb{R}^m$, 
the following conditions are satisfied in descending induction over the number of dependent variables:
there is a permutation $\phi$ and an $R\times I\subset\mathbb{R}^{n+m}$ such that  
\begin{itemize}
\item[$(i)$]  for $(x,y^{\{m\}}) \in R\times I^{\{m\}}$,
\begin{equation*} \label{criterion2-1}
	\sign_{y_{m}} f_{\phi(m)}(x,y^{\{m,\}},y_{m})
\end{equation*}
has only one jump discontinuity as a function of $y_{m}$ on $I_{m}$, 
\item[$(ii)$]  for $(x,y^{\{m,m-1\}}) \in R\times I^{\{m,m-1\}}$,
\begin{equation*} \label{criterion2-2}
	\sign_{y_{m-1}} f_{\phi(m-1)}(x,y^{\{m,m-1\}},y_{m-1},h_{\phi(m)}(x,y^{\{m\}}))
\end{equation*}
has only one jump discontinuity as a function of $y_{m-1}$ on $I_{m-1}$, where
$y_m=h_{\phi(m)}$ solves $f_{\phi(m)}(x,y^{\{m\}},y_m)=0$ in $R\times I^{\{m,m-1\}}$.
\end{itemize}
\end{assumption2}

Using $(i)$ and $(ii)$, by the intermediate value theorem, by the continuity of $f$, and by the uniqueness of jump discontinuity, we know that $y_m=h_{\phi(m)}$ and $y_{m-1}=h_{\phi(m-1)}$ are continuous, uniquely determined.
Hence, Assumption 2 is well explained by descending induction.

\begin{example} 
Let $f=(f_1,f_2):\mathbb{R}^3\to\mathbb{R}^2$ be given by $f_1(x,y,z)=y-1$, $f_2(x,y,z)=x$  with $f(0,1,1)=\mathbf{0}$.
By observation of $f_1$ and $f_2$, we easily know that there is no rectangle of $(0,1,1)$ on which $\sign_{x} f_1(x,y,z)$, $\sign_{z} f_1(x,y,z)$, $\sign_{y} f_2(x,y,z)$, and $\sign_{z} f_2(x,y,z)$ have only one jump discontinuity. 
This means that there is no $\phi$ on a set containing the $z$-axis number,  which satisfies Assumption 2.
On the other hand, since $\partial_yf_1(0,1,1)=1\ne0$, by the analytic version of the implicit function theorem, 
there is a rectangle of $(0,1,1)$ and a unique analytic function $y=y(x,z)$ such that $f_1=0$ in the rectangle
on which $\sign_{y} f_1(x,y,z)$ has only one jump discontinuity.
Moreover, 
\begin{equation*}
	\partial_x f_2(x,y(x,z),z)\big|_{(x,z)=(0,1)}
		= \partial_x f_2(0,1,1)+\partial_y f_2(0,1,1)\partial_x y(0,1) 
		=1\ne0.
\end{equation*}
Thus, $\sign_x f_2(x,y(x,z),z)$  has only one jump discontinuity in the rectangle.
Now the permutation $\phi$ is defined by $\phi(1)=2$ and $\phi(2)=1$, where $1$ and $2$ denote the axis numbers of $x$ and $y$, respectively, 
that satisfies Assumption 2 in some rectangle of $(0,1,1)$.
\end{example}

\begin{example} 
Let $f=(f_1,f_2):\mathbb{R}^3\to\mathbb{R}^2$ be defined by  
$f_1(x,y,z)=y-1$ and $f_2(x,y,z)=(z-1)^2$ with $f(0,1,1)=\mathbf{0}$.
%
%
Since $f_2$ is a squared function, $\sign_{z} f_2(x,y,z)$ has no jump discontinuity on any rectangle of $(0,1,1)$. 
However, if we consider $f_1$ and $\partial_z f_2$ instead of  $f_2$, then 
$\sign_{y}f_1$ and $\sign_{z}\partial_{z} f_2$ have only one jump discontinuity, for example, on $[-1,1]\times[-1,2]\times[-1,2]\ni(0,1,1)$.
We can find $\phi$ by defining $\phi(1)=1$ and $\phi(2)=2$, which satisfies Assumption 2, where $1$ and $2$ denote the axis numbers of $y$ and $z$, respectively. 
Note that $(f_1,f_2)$ and $(f_1,\partial_{z}f_2)$ have the same implicit function for $y$ and $z$ in the rectangle.
\end{example} 

We need a lemma to prove the main theorem of this section, which will be shown by using a method of variable elimination.

\begin{lemma} \label{system of analytic solutions-1}
If $y_i=h_{\phi(i)}$ satisfies
\begin{equation*}
f_{\phi(i)}(x,y^{\{m,m-1,\ldots,i\}},h_{\phi(i)}(x,y^{\{m,m-1,\ldots,i\}}),
h_{\phi(i+1)}(x,y^{\{m,m-1,\ldots,i+1\}}),\ldots,h_{\phi(m)}(x,y^{\{m\}}))=0
\end{equation*}
in $R\times I^{\{m,m-1,\ldots,i\}}$ $(i=m,m-1,\ldots,1)$, then
$y=g(x)$ such that $f=\mathbf{0}$, is continuous and
determined uniquely by 
\begin{equation} \label{variable elimination}
\begin{split}
	g_1(x) &= h_{\phi(1)}(x), \\
	g_2(x) &= h_{\phi(2)}(x,g_1(x)), \\
	&\;\;\vdots \\
	g_m(x) &= h_{\phi(m)}(x,g_1(x),g_2(x),\ldots,g_{m-1}(x))
\end{split}
\end{equation}
in $R$
\end{lemma}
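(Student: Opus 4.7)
The plan is a descending induction that mirrors the construction of the $h_{\phi(i)}$ in Assumption 2. Before unwinding that induction, I would first record that each $h_{\phi(i)}$ is itself continuous in all of its arguments (on the rectangle where it is defined) and has image in $I_i$; this is the content of the intermediate-value-plus-single-jump argument already spelled out immediately after Assumption 2. Continuity of $g$ then follows by an ascending induction on $i$ in (\ref{variable elimination}): $g_1(x)=h_{\phi(1)}(x)$ is continuous on $R$, and if $g_1,\ldots,g_{i-1}$ are continuous then $g_i(x)=h_{\phi(i)}(x,g_1(x),\ldots,g_{i-1}(x))$ is a composition of continuous maps; since the image of each $h_{\phi(i)}$ lies in $I_i$, this also shows $g(x)\in I$ for every $x\in R$.

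Next I would verify $f(x,g(x))=\mathbf{0}$ componentwise by descending induction on $i$. For $i=m$, the defining identity $f_{\phi(m)}(x,y^{\{m\}},h_{\phi(m)}(x,y^{\{m\}}))=0$ specialized to $y_j=g_j(x)$ for $j<m$ immediately gives $f_{\phi(m)}(x,g(x))=0$ via the last line of (\ref{variable elimination}). For general $i<m$, substituting $y_j=g_j(x)$ for $j<i$ into the corresponding identity for $h_{\phi(i)}$ in Assumption 2 unwinds every nested $h_{\phi(j)}$ with $j\ge i$ into exactly $g_j(x)$, by reading (\ref{variable elimination}) from $j=i$ up to $j=m$; this yields $f_{\phi(i)}(x,g(x))=0$. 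Uniqueness of $g$ is then inherited from the uniqueness of each $h_{\phi(i)}$: any continuous $g':R\to I$ with $f(x,g'(x))=\mathbf{0}$ must, by descending induction using uniqueness of each $h_{\phi(i)}$, agree with $g$ componentwise.

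The main obstacle is the bookkeeping in the middle step: tracking how the various $h_{\phi(j)}$ appearing nested inside the identity defining $h_{\phi(i)}$ all collapse, under the substitution $y_j=g_j(x)$ for $j<i$, to precisely the components $g_j(x)$ for $j\ge i$. Once (\ref{variable elimination}) is read against the nested structure of that identity, the reduction is mechanical, but it does require a careful match between the positions of the arguments in each $h_{\phi(j)}$ and the cascade.
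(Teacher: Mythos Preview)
Your proposal is correct and follows essentially the same variable-elimination idea as the paper's proof. The only difference is organizational: the paper re-derives the functions $h_i$ by building the reduced systems $f^{[1]},f^{[2]},\ldots$ explicitly from Assumption~2 and lets the cascade (\ref{variable elimination}) fall out at the end, whereas you take the $h_{\phi(i)}$ as given by the hypothesis and verify continuity, $f(x,g(x))=\mathbf{0}$, and uniqueness directly; both routes amount to the same back-substitution.
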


\begin{proof}
For convenience, we assume that $\phi=id$.
By regarding $(x,y^{\{m\}})$ as the independent variables of $f$,
from $(i)$ there is a unique continuous function $y_m=h_m(x,y^{\{m\}}):R\times I^{\{m\}} \to I_m$ such that 
$f_m(x,y^{\{m\}},y_m)=0$ in $R\times I^{\{m\}}$.

Substitute $y_m=h_m(x,y^{\{m\}})$ into $f$ and put
$f(x,y^{\{m\}},h_m(x,y^{\{m\}}))=f^{[1]}(x,y^{\{m\}})$. Precisely, 
\begin{equation*}
\begin{split}
	f_1^{[1]}(x,y^{\{m\}}) &= f_1(x,y^{\{m\}},h_m(x,y^{\{m\}})), \\
		&\;\;\vdots \\
	 f_{m-1}^{[1]}(x,y^{\{m\}}) &= f_{m-1}(x,y^{\{m\}},h_m(x,y^{\{m\}})), \\
	f_{m}^{[1]}(x,y^{\{m\}}) &= 0
\end{split} 
\end{equation*}
on $R\times I^{\{m\}}$, where the $m-1$ nontrivial components of $f^{[1]}$ do not depend on the $y_m$-variable.

For every $(x,y^{\{m.m-1\}})\in R\times I^{\{m,m-1\}}$, 
the property of $(ii)$ shows that 
\begin{equation*} \label{pre-reduction process}
	\sign_{y_{m-1}}f_{m-1}^{[1]}(x,y^{\{m\}})
	= \sign_{y_{m-1}}f_{m-1}(x,y^{\{m\}},h_m(x,y^{\{m\}}))
\end{equation*}
has one jump discontinuity on $I_{m-1}$.
This also produces a unique continuous function 
$y_{m-1}=h_{m-1}(x,y^{\{m,m-1\}}):R\times I^{\{m,m-1\}}\to I_{m-1}$ such that 
$f^{[1]}_{m-1}(x,y^{\{m,m-1\}},y_{m-1})=0$.

Substitute $y_{m-1}=h_{m-1}(x,y^{\{m,m-1\}})$ into $f^{[1]}$ and similar to the argument above, put $f^{[1]}(x,y^{\{m,m-1\}},h_{m-1}(x,y^{\{m,m-1\}}))=f^{[2]}(x,y^{\{m,m-1\}})$, i.e.,
\begin{equation*}
\begin{split}
	 f_1^{[2]}(x,y^{\{m,m-1\}}) &= f_1^{[1]}(x,y^{\{m,m-1\}},h_{m-1}(x,y^{\{m,m-1\}})), \\%
	&\;\;\vdots \\
	f_{m-2}^{[2]}(x,y^{\{m,m-1\}}) &= f_{m-2}^{[1]}(x,y^{\{m,m-1\}}, h_{m-1}(x,y^{\{m,m-1\}})), \\%
	f_{m-1}^{[2]}(x,y^{\{m,m-1\}}) &= 0, \\
	f_{m}^{[2]}(x,y^{\{m,m-1\}}) &= 0
\end{split}	
\end{equation*}
on $R\times I^{\{m,m-1\}}$. Then the $m-2$ nontrivial components of $f^{[2]}$ clearly do not depend on the variables of $y_m$ and $y_{m-1}$.

Similarly, for $(x,y^{\{m,m-1,m-2\}})\in R\times I^{\{m,m-1,m-2\}}$, 
the inductively assumed property of $(ii)$ shows that 
\begin{equation*} \label{reduction process2}
\begin{split}
	\sign_{m-2}f_{m-2}^{[2]}(x,y^{\{m.m-1\}})
	&= \sign_{m-2}f_{m-2}^{[1]}(x,y^{\{m,m-1\}},h_{m-1}(x,y^{\{m,m-1\}})) \\
	&= \sign_{m-2}f_{m-2}(x,y^{\{m,m-1\}},h_{m-1}(x,y^{\{m,m-1\}}),h_m(x,y^{\{m\}}))
\end{split}	
\end{equation*}
has one jump discontinuity on $I_{m-2}$.
Again, we gain a unique continuous function 
$y_{m-2}=h_{m-2}(x,y^{\{m,m-1,m-2\}}):R\times I^{\{m,m-1,m-2\}}\to I_{m-2}$ which satisfies 
the implicit equation of $f^{[2]}_{m-2}(x,y^{\{m,m-1,m-2\}},y_{m-2})=0$.

Repeating the above variable-reduction process inductively until reaching $y_1$, 
we get a unique continuous function $y_1=h_1(x):R_n\to I_1$ such that $f^{[m-1]}_1(x,h_1(x))=0$. Eventually, the unique continuous function $y=g(x)$ for $f(x,y)=\mathbf{0}$ is calculated as 
\begin{equation} \label{variable elimination2}
\begin{split}
	y_1 &= h_1(x), \\
	y_2 &= h_2(x,y_1), \\
	&\;\;\vdots \\
	y_m &= h_m(x,y_1,\ldots,y_{m-1})
\end{split}
\end{equation}
in $R$. Therefore, we obtain (\ref{variable elimination}) and
the proof is complete.
\end{proof}

Although $y_i=h_{\phi(i)}$ in Lemma \ref{system of analytic solutions-1} has an integral representation as shown in (\ref{int repre}), which does not reveal itself algebraically. 
From (\ref{variable elimination}) and (\ref{variable elimination2}), if every $h_{\phi(i)}$ is analytic, then every $g_i$ is also analytic, and vice versa.
If these are analytic, then every $g_i$ is calculated as the limit of a sequence of multivariate polynomials in the weak-star topology on $L^\infty$.
The necessity of the implicit function theorem and the analyticity of implicit functions yield the following main theorem and corollary.

\begin{theorem} \label{system of analytic solutions}
If $f$ is continuously differentiable such that $|J_{f,y}(a,b)|\ne0$ with $f(a,b)=\mathbf{0}$ and
every component of an implicit function for $f=\mathbf{0}$ is analytic on a neighborhood of $(a,b)$, then the implicit function is calculated uniquely in some rectangle of $(a,b)$ as (\ref{variable elimination}).
\end{theorem}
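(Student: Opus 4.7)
The plan is to reduce Theorem \ref{system of analytic solutions} to Lemma \ref{system of analytic solutions-1} by showing that the hypotheses of the classical implicit function theorem, combined with the assumed analyticity of the implicit function $g$, imply that there is a permutation $\phi$ and a rectangle $R\times I$ around $(a,b)$ in which Assumption 2 holds. After the permutation is in hand, the representation (\ref{variable elimination}) is exactly the output of Lemma \ref{system of analytic solutions-1}.

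First I would handle the choice of $\phi$. Since $|J_{f,y}(a,b)|\ne 0$, a standard linear-algebra fact (Gaussian elimination with complete pivoting, or equivalently the existence of an LU factorization after a row permutation) guarantees a permutation $\phi$ on $\{1,\ldots,m\}$ such that every leading principal minor of the rearranged matrix $\bigl(\partial_{y_j}f_{\phi(i)}(a,b)\bigr)_{i,j}$ is nonzero. I would pick $\phi$ accordingly. By continuity of the partial derivatives, these minors remain nonzero on a small enough open neighborhood of $(a,b)$, which I shrink to a compact rectangle $R\times I$.

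Next I would carry out the descending induction required by Assumption 2. At the top step, the Schur-complement identity applied to $J_{f,y}$ shows that $\partial_{y_m}f_{\phi(m)}(a,b)\ne 0$; the classical implicit function theorem then produces a unique $C^1$ function $y_m=h_{\phi(m)}(x,y^{\{m\}})$ solving $f_{\phi(m)}=0$ on $R\times I^{\{m\}}$, and its uniqueness together with the fact that $\partial_{y_m}f_{\phi(m)}$ keeps a constant sign on the rectangle forces $\sign_{y_m}f_{\phi(m)}$ to have exactly one jump as a function of $y_m$ on $I_m$, which is condition $(i)$. Substituting $y_m=h_{\phi(m)}$ into the next equation $f_{\phi(m-1)}=0$ and differentiating with respect to $y_{m-1}$ gives a Schur-complement expression whose nonvanishing at $(a,b)$ is exactly the next leading principal minor of the permuted Jacobian; this yields condition $(ii)$ and provides $h_{\phi(m-1)}$. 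Continuing inductively, each elimination step uses one further leading principal minor, all of which I arranged to be nonzero, and this verifies Assumption 2 in full.

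Once Assumption 2 is secured, Lemma \ref{system of analytic solutions-1} applies verbatim and delivers the unique continuous implicit function $y=g(x)$ in the form (\ref{variable elimination}). The hypothesis that every component of $g$ is analytic on a neighborhood of $a$ is exactly what makes each $h_{\phi(i)}$ analytic (equivalently, the analytic version of the implicit function theorem applied stepwise produces analytic $h_{\phi(i)}$, and compositions and substitutions of analytic functions are analytic), so the representation is a genuine composition of analytic functions, completing the proof. The main obstacle I expect is the bookkeeping in the permutation/Schur-complement argument: one must be careful that the leading principal minors of the \emph{permuted} Jacobian, not of $J_{f,y}$ itself, are the quantities controlling each elimination step, and that they remain nonzero uniformly on the chosen rectangle so that the classical implicit function theorem applies at every stage of the descending induction.
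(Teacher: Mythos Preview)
Your proposal is correct and follows essentially the same route as the paper: both verify Assumption 2 by constructing a permutation $\phi$ and a rectangle on which the classical implicit function theorem applies at each elimination step, and then invoke Lemma \ref{system of analytic solutions-1}. The paper chooses $\phi$ greedily (at each stage picking any index $i$ with the relevant partial nonzero) and proves the reduced Jacobian is nonsingular via the explicit identity $|J_{f^{[1]},y^{\{m\}}}|=(-1)^{i+m}|J_{f,y}|/\partial_{y_m}f_i$, which is precisely the Schur-complement formula you invoke, so your upfront LU-based choice of $\phi$ is just an equivalent packaging of the same computation.
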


Since the analytic version of the implicit function theorem guarantees a unique existence of a system of analytic implicit functions in some rectangle, we have a corollary.

\begin{corollary} \label{ift2assumption3}
Suppose that every component of $f$ is analytic such that $|J_{f,y}(a,b)|\ne0$ with $f(a,b)=\mathbf{0}$. 
Then the implicit function for $f=\mathbf{0}$ is calculated uniquely in some rectangle of $(a,b)$
as (\ref{variable elimination}) .
\end{corollary}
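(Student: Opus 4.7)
The plan is to reduce Corollary \ref{ift2assumption3} to Theorem \ref{system of analytic solutions} by using the analytic version of the implicit function theorem to promote analyticity of $f$ to analyticity of its implicit function, after which the theorem applies verbatim.

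First, I would invoke the analytic implicit function theorem (mentioned in the excerpt just after the classical implicit function theorem, and cited via \cite[Theorem 6.1.2]{kp}): because every component of $f$ is analytic on a neighborhood of $(a,b)$, $f(a,b)=\mathbf{0}$, and $|J_{f,y}(a,b)|\neq 0$, there exists a rectangle $R\times I\ni(a,b)$ together with a unique analytic map $g:R\to I$ such that $f(x,g(x))=\mathbf{0}$ for all $x\in R$. In particular, each component $g_i$ of $g$ is analytic on a neighborhood of $a$.

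Second, I would observe that analyticity of $f$ implies continuous differentiability on that neighborhood, and therefore the hypotheses of Theorem \ref{system of analytic solutions} are all satisfied at $(a,b)$: $f$ is $C^1$, the Jacobian $J_{f,y}(a,b)$ is invertible, $f(a,b)=\mathbf{0}$, and every component of the implicit function $g$ is analytic near $(a,b)$. Applying Theorem \ref{system of analytic solutions} then yields, in some (possibly smaller) rectangle of $(a,b)$, the variable-elimination representation (\ref{variable elimination}). Uniqueness in that smaller rectangle follows either from the uniqueness clause of the analytic implicit function theorem applied above or directly from the uniqueness statement already built into Theorem \ref{system of analytic solutions} via Lemma \ref{system of analytic solutions-1}.

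There is essentially no obstacle to overcome here: the corollary is a specialization in which the hypothesis \emph{``every component of the implicit function is analytic''} appearing in Theorem \ref{system of analytic solutions} is replaced by the stronger, and more easily verified, hypothesis \emph{``every component of $f$ is analytic.''} All the work of passing between these two conditions is handled for free by the analytic implicit function theorem, so the proof reduces to citing that result and then citing Theorem \ref{system of analytic solutions}.
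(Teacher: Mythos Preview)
Your proposal is correct and matches the paper's approach exactly: the paper justifies the corollary in a single sentence preceding it, noting that the analytic implicit function theorem yields an analytic implicit function, whereupon Theorem~\ref{system of analytic solutions} applies directly. There is nothing to add.
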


\begin{proof}[Proof of Theorem \ref{system of analytic solutions}]
It suffices to show that there is an $R\times I$ and
a permutation $\phi$ with which $y_i=h_{\phi(i)}$ is analytic on $R\times I^{\{m,m-1,\ldots,i\}}$ $(i=m,m-1,\ldots,1)$, which satisfies Assumption 2. Then by Lemma \ref{system of analytic solutions-1}, the desired conclusion follows.

Since that $|J_{f,y}(a,b)|\ne0$, the column vectors of $J_{f,y}(a,b)$ are not all zero.   
There is $i$ such that $\partial_{y_m}f_i(a,b)\ne0$
and define $\phi(m)=i$.
Thus there is a unique continuous function $y_m=h_i(x,y^{\{m\}})$
such that $f_i(x,y^{\{m\}},h_i(x,y^{\{m\}}))=0$ for some rectangle of $(a,b)$.
In addition, 
by Corollary \ref{analytic cor} with analyticity, $y_m=h_i$ is analytic on 
$R^{(1)}\times \prod_{k=1}^{m-1}I_k^{(1)}$ to  
$I_m^{(1)}$ for some rectangle $R^{(1)}\times \prod_{k=1}^{m}I_k^{(1)}\ni(a,b)$.

Put $f^{[1]}(x,y^{\{m\}})=f^{\{i\}}(x,y^{\{m\}},h_i(x,y^{\{m\}}))$, where $f^{\{i\}}$ is the $i$th component removal from $f$.
First, we prove that $|J_{f^{[1]},y^{\{m\}}}(a,b^{\{m\}})|\ne0$.
Since $\partial_{y_m}f_i(a,b)\ne0$, 
at $(a,b^{\{m\}})$ the determinant of 
\begin{equation*} \label{sub_jacobian}
	|J_{f^{[1]},y^{\{m\}}}| 
	=
	\arraycolsep=3pt\def\arraystretch{1.3}
	\begin{vmatrix}
	\partial_{y_1}f_1^{[1]} & \cdots & \partial_{y_{m-1}} f_1^{[1]}  \\
	\vdots & \ddots & \vdots \\
	\partial_{y_1} f_{i-1}^{[1]} & \cdots & \partial_{y_{m-1}} f_{i-1}^{[1]} \\
	\partial_{y_1} f_{i+1}^{[1]} & \cdots & \partial_{y_{m-1}} f_{i+1}^{[1]} \\
	\vdots & \ddots & \vdots \\
	\partial_{y_1} f_{m}^{[1]} & \cdots & \partial_{y_{m-1}} f_{m}^{[1]} 
	\end{vmatrix}
\end{equation*}
is equal to
\begin{equation} \label{calculation of determinant}
	\frac{(-1)^{i+m}}{\partial_{y_m} f_i(a,b)} 
	\arraycolsep=3pt\def\arraystretch{1.3}
	\begin{vmatrix}
	\partial_{y_1} f_1^{[1]} & \cdots & \partial_{y_{m-1}} f_1^{[1]} & \partial_{y_m} f_1(a,b) \\
	\vdots & \ddots & \cdots & \vdots & \\
	\partial_{y_1} f_{i-1}^{[1]} & \cdots & \partial_{y_{m-1}} f_{i-1}^{[1]} & \partial_{y_m} f_{i-1}(a,b) \\
	0 & \cdots  & 0 &  \partial_{y_m} f_i(a,b) \\
	\partial_{y_1} f_{i+1}^{[1]} & \cdots & \partial_{y_{m-1}} f_{i+1}^{[1]} & \partial_{y_m} f_{i+1}(a,b)\\
	\vdots & \ddots & \vdots  & \vdots \\
	\partial_{y_1} f_{m}^{[1]} & \cdots  & \partial_{y_{m-1}} f_{m}^{[1]} &  \partial_{y_m} f_m(a,b)
	\end{vmatrix}.
\end{equation}
Let $1\le k\le m-1$.  Subtract $\partial_{y_k}h_i$ times the last column of (\ref{calculation of determinant}) from the other columns.
Then the $k$th column of (\ref{calculation of determinant}) is calculated as
\begin{equation} \label{jacob_column}
	\begin{pmatrix}
	\partial_{y_k} f_1^{[1]} - \partial_{y_m} f_1(a,b)\partial_{y_k} h_i \\
	\vdots \\
	\partial_{y_k} f_{i-1}^{[1]} - \partial_{y_m} f_{i-1}(a,b)\partial_{y_k} h_i \\
	 -\partial_{y_m} f_i(a,b)\partial_{y_k} h_i \\
	\partial_{y_k} f_{i+1}^{[1]} - \partial_{y_m} f_{i+1}(a,b)\partial_{y_k} h_i \\
	\vdots \\
	\partial_{y_k} f_{m}^{[1]} - \partial_{y_m} f_m(a,b)\partial_{y_k} h_i
	\end{pmatrix}
	=
	\begin{pmatrix}
	\partial_{y_k} f_1(a,b) \\
	\vdots \\
	\partial_{y_k}f_{i-1}(a,b) \\
	-\partial_{y_m}f_i(a,b)\partial_{y_k}h_i \\
	\partial_{y_k}f_{i+1}(a,b) \\
	\vdots \\
	\partial_{y_k}f_{m}(a,b)
	\end{pmatrix}
\end{equation}
by the chain rule, $\partial_{y_k}f_j^{[1]}=\partial_{y_k}f_j+\partial_{y_m}f_j\partial_{y_k}h_i$.
Since $f_i(x,y^{\{m\}},h_i(x,y^{\{m\}}))=0$ for $(x,y^{\{m\}})\in R^1\times \prod_{k=1}^{m-1}I_k^1$, we get
\begin{equation*}
0=\partial_{y_k}f_i(x,y^{\{m\}},h_i(x,y^{\{m\}}))\Big\vert_{(x,y^{\{m\}})=(a,b^{\{m\}})}
=\big[\partial_{y_k}f^{[1]}_i-\partial_{y_m}f_i\partial_{y_k}h_i\big]_{(x,y)=(a,b)}.
\end{equation*}
This identity and the change of variables of $\partial_{y_k}f^{[1]}_i=\partial_{y_k}f_i+\partial_{y_m}f_i\partial_{y_k}h_i$
give
\begin{equation*}
\begin{split}
-\partial_{y_m} f_i(a,b)\partial_{y_k} h_i(a,b^{\{m\}}) 
	&=
	\partial_{y_k} f_i(a,b) - \partial_{y_k} f_i(x,y^{\{m\}},h_i(x,y^{\{m\}})) %
	\Big\vert_{(x,y^{\{m\}})=(a,b^{\{m\}})} \\
	&=\partial_{y_k} f_i(a,b).
\end{split}	
\end{equation*}
So, (\ref{jacob_column}) equals
\begin{equation*}
	\begin{pmatrix}
	\partial_{y_k} f_1(a,b) \\
	\vdots \\
	\partial_{y_k}f_{i-1}(a,b) \\
	\partial_{y_k}f_i(a,b)  \\
	\partial_{y_k}f_{i+1}(a,b) \\
	\vdots \\
	\partial_{y_k}f_{m}(a,b)
	\end{pmatrix}
\end{equation*}
for $1\le k\le m$. Hence,
\begin{equation*} 
	|J_{f^{[1]},y^{\{m\}}}(a,b^{\{m\}})|=\frac{(-1)^{i+m}}{\partial_{y_m}f_i(a,b)} |J_{f,y}(a,b)| \ne 0.
\end{equation*}

Next, since $f^{[1]}$ satisfies $|J_{f^{[1]},y^{\{m\}}}(a,b^{\{m\}})|\ne0$ with $f^{[1]}(a,b^{\{m\}})=\mathbf{0}$, we can take $i'$ such that $\partial_{y_{m-1}}f^{[1]}_{i'}(a,b^{\{m\}})\ne0$. Here,  $i'\ne i$; 
therefore, put $\phi(m-1)=i'$.
As before, 
by Corollary \ref{analytic cor} there is a unique analytic function $y_{m-1}=h_{i'}(x,y^{\{m,m-1\}}):R^{(2)}\times \prod_{k=1}^{m-2}I_k^{(2)}\to I_{m-1}^{(2)}$ for some rectangle $R^{(2)}\times \prod_{k=1}^{m-1}I_k^{(2)}\ni(a,b^{\{m\}})$ in which $f^{[1]}_{i'}(x,y^{\{m,m-1\}},h_{i'}(x,y^{\{m,m-1\}}))=0$. 

Put $f^{[2]}(x,y^{\{m,m-1\}})=f^{[1]\{i'\}}(x,y^{\{m,m-1\}},h_{i'}(x,y^{\{m,m-1\}}))$, where $f^{[1]\{i'\}}$ indicates the $i'$th component removal from $f^{[1]}$.
By the exact same method above, 
\begin{equation*}
|J_{f^{[2]},y^{\{m,m-1\}}}(a,b^{\{m,m-1\}})|\ne0.
\end{equation*} 
In descending induction, $\phi$ is well defined as a permutation on $\{1,2,\ldots,m\}$ and take 
\begin{equation*}
R = \bigcap_{k=1}^mR^{(k)}\quad \mbox{and}\quad
I =  \bigcap_{k=1}^m I_1^{(k)}\times\bigcap_{k=1}^{m-1} I_2^{(k)}\times 
\cdots \times\bigcap_{k=1}^2I_{m-1}^{(k)}\times  I_m^{(1)}
\end{equation*}
which contain $a$ and $b$, respectively. Finally, we will prove that $f$ satisfies Assumption 2.
Since $y_i=h_{\phi(i)}(x,y^{\{m,m-1,\ldots,i\}})$ is a unique analytic function such that 
\begin{equation*}
\begin{split}
	f_{\phi(i)}(x,&y^{\{m,m-1,\ldots,i\}},
		h_{\phi(i)}(x,y^{\{m,m-1,\ldots,i\}}),\ldots,h_{\phi(m)}(x,y^{\{m\}})) \\
		&=f^{[m-i]}_{\phi(i)}(x,y^{\{m,m-1,\ldots,i\}},h_{\phi(i)}(x,y^{\{m,m-1,\ldots,i\}})) = 0 
\end{split}
\end{equation*}
in $R\times I^{\{m,m-1,\ldots,i\}}$, for every $(x,y^{\{m,m-1\ldots,i\}})\in R\times I^{\{m,m-1,\ldots,i\}}$ there is 
\begin{equation*}
	(y_{i+1},\ldots,y_m)
		=(h_{\phi(i+1)}(x,y^{\{m,m-1,\ldots,i+1\}}),\ldots,h_{\phi(m)}(x,y^{\{m\}}))\in I^{\{i,i-1,\ldots,1\}}
\end{equation*}
so that 
\begin{equation*}
	\sign_{y_i}f_{\phi(i)}(x,y^{\{m,m-1,\ldots,i\}},y_i,y_{i+1},\ldots,y_{m})
\end{equation*}
has only one jump discontinuity on $I_i^{(m-i+1)}$ for $i=m,m-1,\ldots,1$. 
Therefore, the proof is complete.
\end{proof}

The following two examples of implicit functions having three variables and two equations will serve to illustrate Theorem \ref{system of analytic solutions} and Corollary \ref{ift2assumption3}.  In the first example, an implicit function will be approximated 
when the Jacobian matrix is non-degenerate, while the second is an example which has a degenerate Jacobian matrix. 

\begin{example} \label{nondege}
Let $f=(f_1,f_2):\mathbb{R}^3\to\mathbb{R}^2$ be defined by $f_1(x,y,z)=x + y^2 + z^3 - 6$, $f_2(x,y,z)=x^3y - z - 1$ with $f(1,2,1)=\mathbf{0}$. We focus on calculating $g(x)=(g_1(x),g_2(x))$ with $y=g_1(x)$ and $z=g_2(x)$ such that $f=\mathbf{0}$ in an interval of $x=1$. 
Since $f_1$ and $f_2$ are analytic near $(x,y)=(1,2)$ and $|J_{f,(y,z)}(1,2,1)|=-7\ne0$,
by Corollary \ref{ift2assumption3} 
there is a rectangle, for example, $R=[0.5,1.5]\ni x=1$ and $I=[1.5,2.5]\times[-2,8]\ni(y,z)=(2,1)$ on which both $\sign_z f_1(x,y,z)$ and  $\sign_z f_2(x,y,z)$ have only one jump discontinuity. 
(The surfaces $f_1=0$ and $f_2=0$ are illustrated in Figure \ref{fig5-1}, in which $R\times I$ is also depicted.)

First, by observation, $n_z(f_2)=-1$. For $N=4$,  
$z=h_2(x,y)=\sum_{0\le\alpha<4}c_{\alpha}(x-1)^{\alpha_1}(y-2)^{\alpha_2}$ approximates $z=z(x,y)$ such that $f_2(x,y,z(x,y))=0$, where $c_{\alpha}$ is the $(\alpha+1)$th component of 
\begin{equation} \label{compare10} 
\arraycolsep=4pt\def\arraystretch{0.9}
\begin{matrix}
& \\
\left(\hspace{0.0em} \vphantom{ \begin{matrix} 12 \\ 12 \\ 12 \\ 12 \end{matrix} } \right .
\end{matrix}
\hspace{-1.2ex}
\begin{matrix}
\Scale[0.8]{1} & \Scale[0.8]{y-2} & \Scale[0.8]{(y-2)^2}  & \Scale[0.8]{(y-2)^3} & \\ \arrayrulecolor{gray}\hline \\[-4mm]
\Scale[0.8]{0.999999999999997}   & \Scale[0.8]{ 0.999999999999976}  & \Scale[0.8]{-0.000000000000017}  & \Scale[0.8]{0.000000000000259} \\
\Scale[0.8]{6.000000000000016}   & \Scale[0.8]{3.000000000000230}  & \Scale[0.8]{0.000000000000490}  & \Scale[0.8]{-0.000000000002899} \\
\Scale[0.8]{6.000000000000000}   & \Scale[0.8]{3.000000000000279}  & \Scale[0.8]{0.000000000000160}  & \Scale[0.8]{-0.000000000002686} \\
\Scale[0.8]{1.999999999999848}   & \Scale[0.8]{0.999999999998846}  & \Scale[0.8]{-0.000000000003098}  & \Scale[0.8]{0.000000000015994}
\end{matrix}
\hspace{-0.2em}
\begin{matrix}
& \\
\left . \vphantom{ \begin{matrix} 12 \\ 12 \\ 12 \\ 12  \end{matrix} } \right )
\begin{matrix}
\Scale[0.8]{1} \\ \Scale[0.8]{x-1}  \\ \Scale[0.8]{(x-1)^2} \\ \Scale[0.8]{(x-1)^3} 
\end{matrix}
\end{matrix}	%
\end{equation}
($z=h_2(x,y)$ is drawn in Figure \ref{fig6}).
Furthermore, the coefficients of $z=z(x,y)$ (which is a polynomial in two variables) are given by
\begin{equation*} 
\arraycolsep=3pt\def\arraystretch{1}
\begin{matrix}
& \\
\left(\kern-4pt \vphantom{ \begin{matrix} 12 \\ 12 \\ 12 \\ 12 \end{matrix} } \right .
\end{matrix}
\hspace{-0pt}
\begin{matrix} 
\Scale[0.8]{1} & \Scale[0.8]{y-2} & \Scale[0.8]{(y-2)^2}  & \Scale[0.8]{(y-2)^3} & \\ \arrayrulecolor{gray} \hline \\[-4mm]
\Scale[0.8]{1}   & \Scale[0.8]{1}  & \Scale[0.8]{0}  & \Scale[0.8]{0} \\
\Scale[0.8]{6}   & \Scale[0.8]{3}  & \Scale[0.8]{0}  & \Scale[0.8]{0} \\
\Scale[0.8]{6}   & \Scale[0.8]{3}  & \Scale[0.8]{0}  & \Scale[0.8]{0} \\
\Scale[0.8]{2}   & \Scale[0.8]{1}  & \Scale[0.8]{0}  & \Scale[0.8]{0}
\end{matrix}
\hspace{-0.4em}
\begin{matrix}
& \\
\left . \vphantom{ \begin{matrix} 12 \\ 12 \\ 12 \\ 12  \end{matrix} } \right )
\begin{matrix}
\Scale[0.8]{1} \\ \Scale[0.8]{x-1}  \\ \Scale[0.8]{(x-1)^2} \\ \Scale[0.8]{(x-1)^3} 
\end{matrix}
\end{matrix}\kern-6pt	%
\end{equation*}
which is comparable to (\ref{compare10}).

According to (\ref{variable elimination}), put
$f_1^{[1]}(x,y)=f_1(x,y,h_2(x,y))=0$ (as shown in Figure \ref{fig7}). Since $\sign_y f_1^{[1]}(x,y)$
has only one jump discontinuity and $n_y(f_1^{[1]})=1$, 
with $N=25$ we gain $y=h_1(x)=\sum_{k=0}^{24}c_k(x-1)^k$ in $[-0.5,2]\times[0,5]$, which is an approximation of $y=y(x)$ such that $f_1^{[1]}(x,y(x))=0$,
where 
\begin{equation*}
	h_1(x) = 
\arraycolsep=2.2pt\def\arraystretch{2}	
\left(
\begin{array}{ccccccccccccccccccccccccc}
  \Scale[0.8]{\rotatebox{90}{$2.0004078854$}} &
  \Scale[0.8]{\rotatebox{90}{$-2.7149208367$}} &
  \Scale[0.8]{\rotatebox{90}{$-4.9633925567$}} &
  \Scale[0.8]{\rotatebox{90}{$19.6874353356$}} &
  \Scale[0.8]{\rotatebox{90}{$25.2217132975$}} &
  \Scale[0.8]{\rotatebox{90}{$-235.2544997108$}} &
  \Scale[0.8]{\rotatebox{90}{$-40.2428746167$}} &
  \Scale[0.8]{\rotatebox{90}{$1987.0933621889$}} &
  \Scale[0.8]{\rotatebox{90}{$-94.7436991654$}} &
  \Scale[0.8]{\rotatebox{90}{$-11429.1583398359$}} &
  \Scale[0.8]{\rotatebox{90}{$-1087.1586128315$}} &
  \Scale[0.8]{\rotatebox{90}{$44047.6659489883$}} &
  \Scale[0.8]{\rotatebox{90}{$15624.2539582495$}} &
  \Scale[0.8]{\rotatebox{90}{$-110502.7914796270$}} &
  \Scale[0.8]{\rotatebox{90}{$-71424.8415543300$}} &
  \Scale[0.8]{\rotatebox{90}{$170165.9282511103$}} &
  \Scale[0.8]{\rotatebox{90}{$167091.0376411914$}} &
  \Scale[0.8]{\rotatebox{90}{$-138031.7528541524$}} &
  \Scale[0.8]{\rotatebox{90}{$-213411.3308704084$}} &
  \Scale[0.8]{\rotatebox{90}{$22951.9362353907$}} &
  \Scale[0.8]{\rotatebox{90}{$137736.7774057497$}} &
  \Scale[0.8]{\rotatebox{90}{$43043.3433440683$}} &
  \Scale[0.8]{\rotatebox{90}{$-30478.2871965461$}} &
  \Scale[0.8]{\rotatebox{90}{$-22061.1876771915$}} &
  \Scale[0.8]{\rotatebox{90}{$-3982.2743839354$}} 
\end{array}\right)_{\kern-4pt \Scale[0.6]{1\!\times\!25}}\kern-4pt
	\arraycolsep=3pt\def\arraystretch{1.1}
	\begin{pmatrix} 1 \\
	 x-1 \\
	 (x-1)^2 \\
	 \vdots \\
	 (x-1)^{24} 
	 \end{pmatrix}_{\kern-3pt \Scale[0.6]{25\!\times\!1}}
\end{equation*}
(shown in Figure \ref{fig8}).
Finally, 
\begin{equation*}
\begin{split}
y=g_1(x) &= h_1(x) \\
z=g_2(x) &= h_2(x, h_1(x))
\end{split}
\end{equation*}
are the approximations of $y=y(x)$ and $z=z(x)$ such that $f(x,y(x),z(x))=\mathbf{0}$, respectively, in $R$. 
In addition, 
the quantities 
$f_1(x,g_1(x), g_2(x))$ and $f_2(x, g_1(x), g_2(x))$ are illustrated in Figure \ref{fig8-1} and the approximated curve $x\mapsto(g_1(x),g_2(x))$ for $f=\mathbf{0}$ is shown in Figure \ref{fig9}.
\begin{figure}[!ht]
\centerline{\includegraphics[clip, trim=0cm 7.5cm 0cm 7.5cm, width=0.70\textwidth]{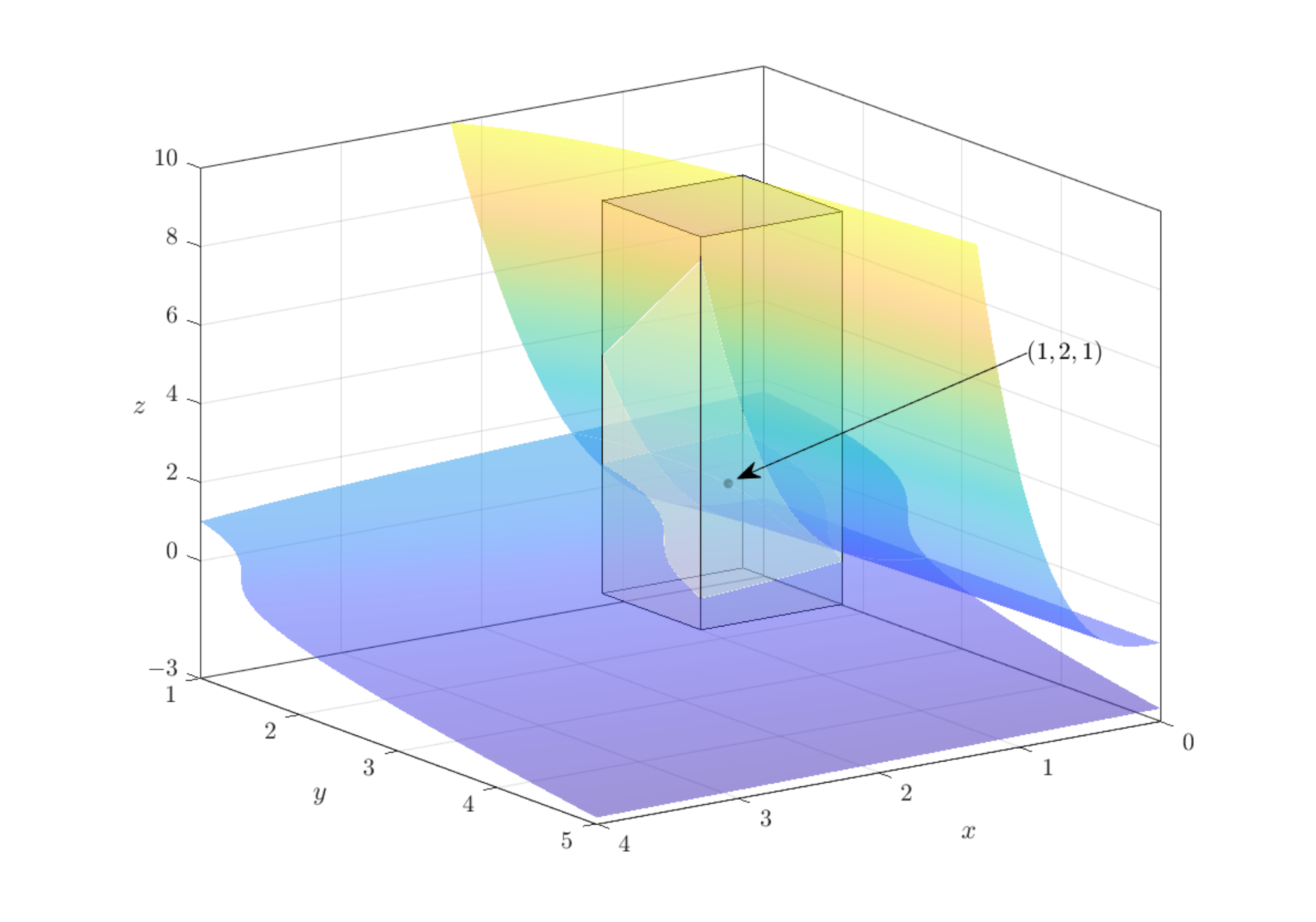}}
\vspace{-0.5cm}
\caption{The surfaces of $f_1(x,y,z)=x + y^2 + z^3 - 6=0$ and $f_2(x,y,z)=x^3y - z - 1=0$ and $R\times I$.}
\label{fig5-1}
\end{figure}
\begin{figure}[!ht]
\centerline{\includegraphics[clip, trim=0cm 7.5cm 0cm 7.5cm, width=0.70\textwidth]{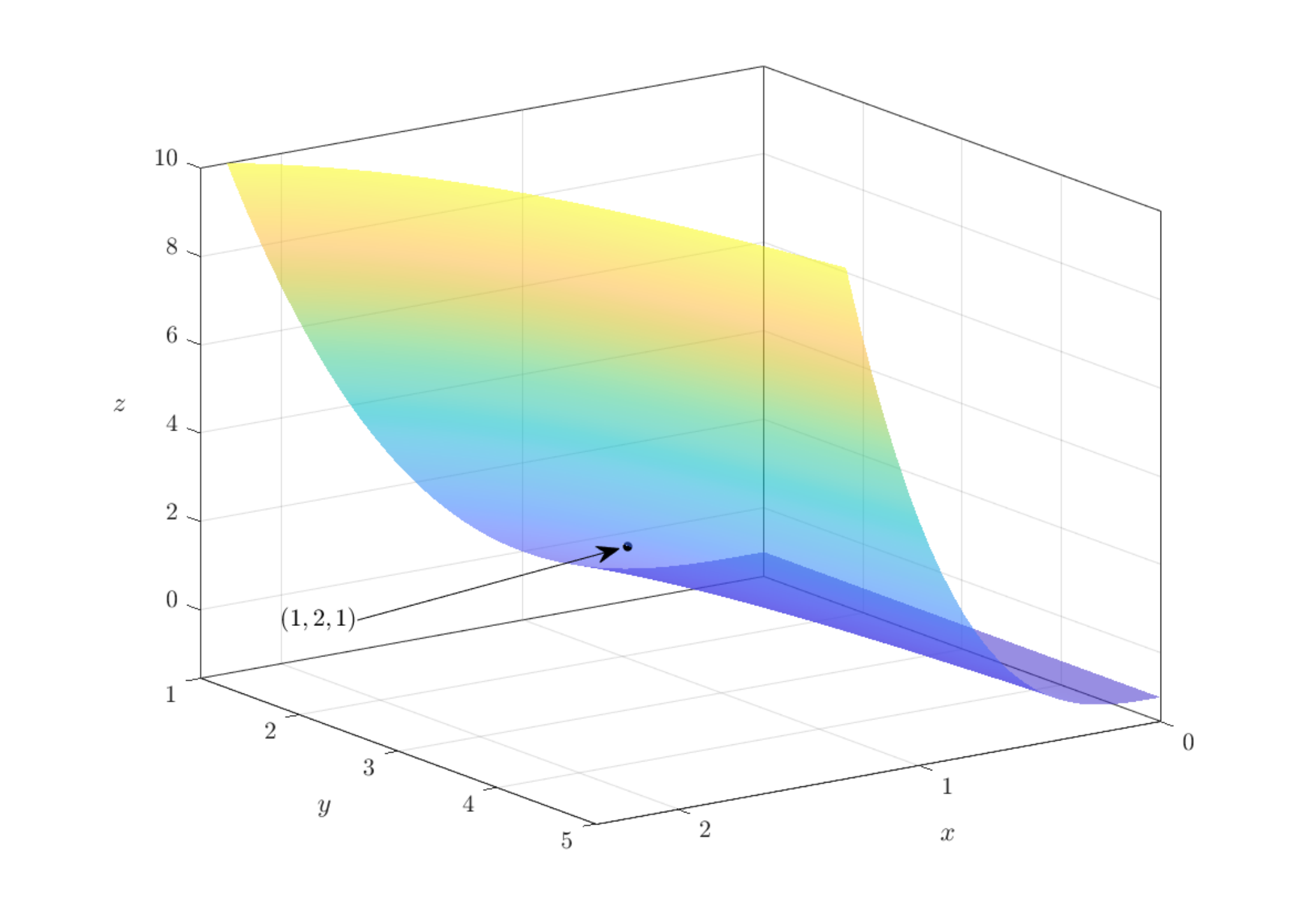}}
\vspace{-0.5cm}
\caption{The surface $z=h_2(x,y)=\sum_{0\le\alpha<4}c_{\alpha}(x-1)^{\alpha_1}(y-2)^{\alpha_2}$.}
\label{fig6}
\end{figure}
\begin{figure}[!ht]
\centerline{\includegraphics[clip, trim=0cm 7.5cm 0cm 7.5cm, width=0.70\textwidth]{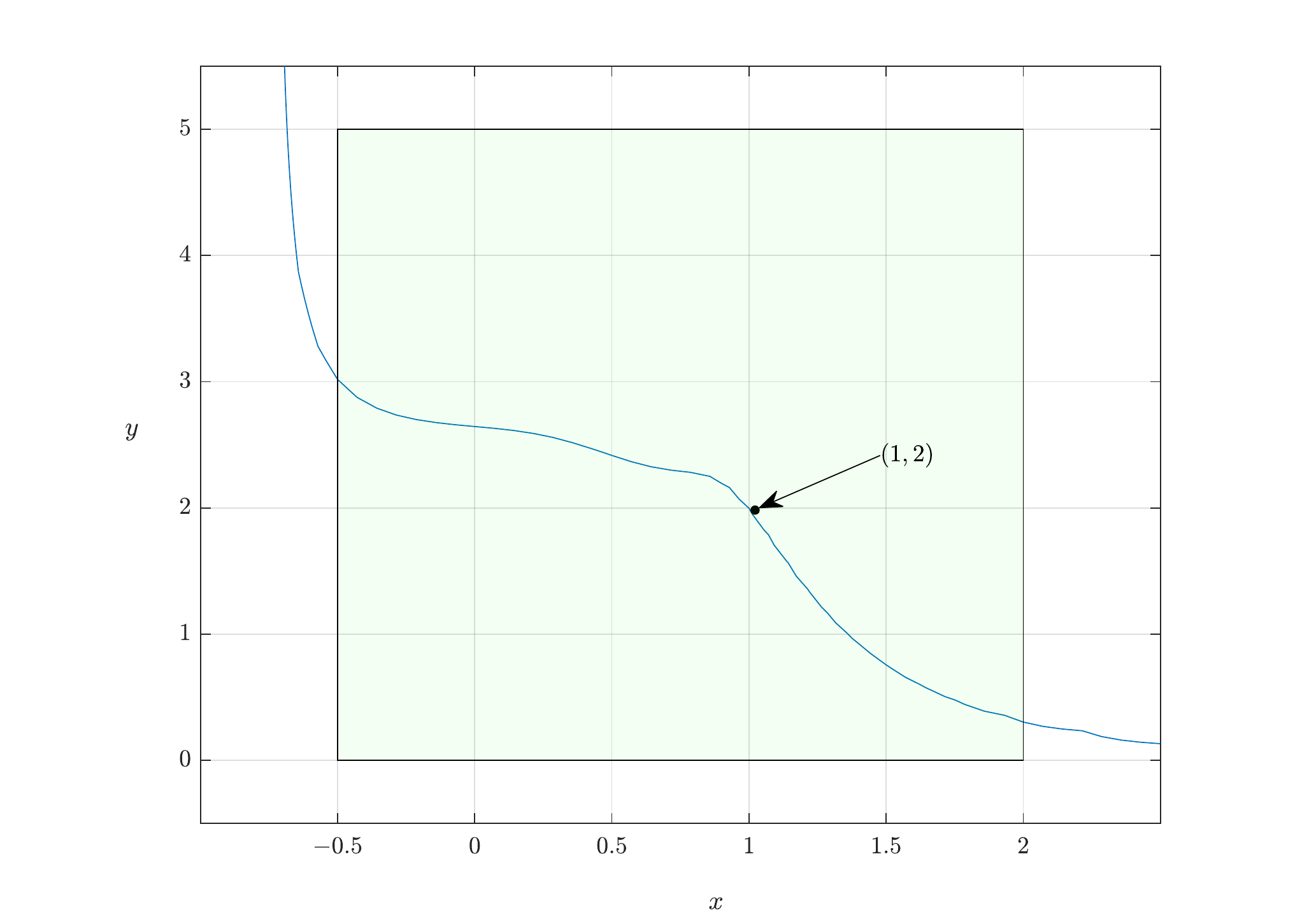}}
\caption{A graphical plot of the implicit function of $f_1(x,y,h_2(x,y))=0$ in $[-0.5,2]\times[0,5]$.}
\label{fig7}
\end{figure}
\begin{figure}[!ht]
\centerline{\includegraphics[clip, trim=0cm 7.5cm 0cm 7.5cm, width=0.70\textwidth]{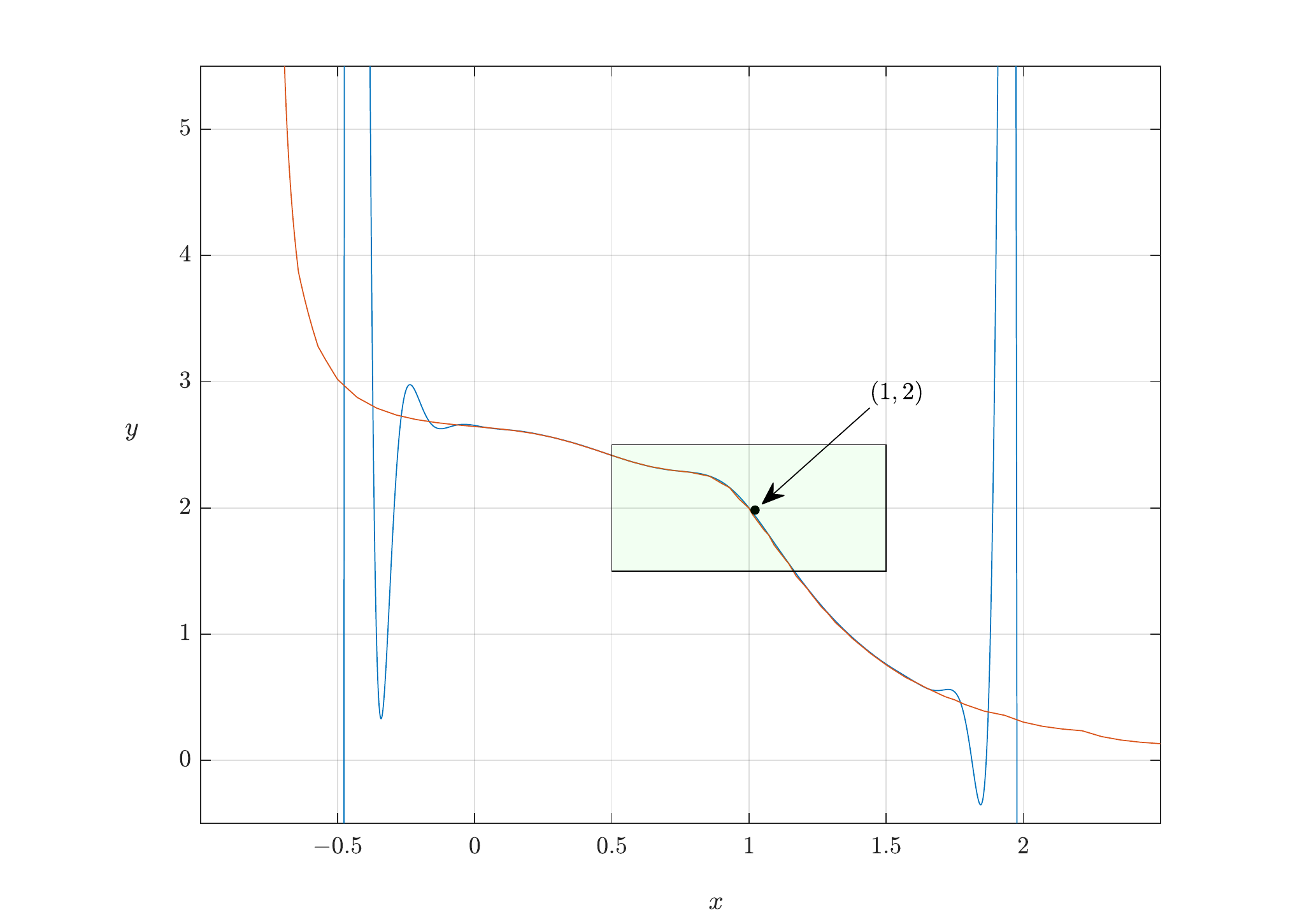}}
\caption{The function $y=h_1(x)$ is denoted in blue in $R\times[1.5,2.5]$ containing $(1,2)$, which comes from $f_1(x,y,h_2(x,y))=0$. The red indicates a graphical plot of $f_1(x,y,h_2(x,y))=0$.}
\label{fig8}
\end{figure}
\begin{figure}[!ht]
\centerline{\includegraphics[clip, trim=0cm 7.5cm 0cm 7.5cm, width=0.70\textwidth]{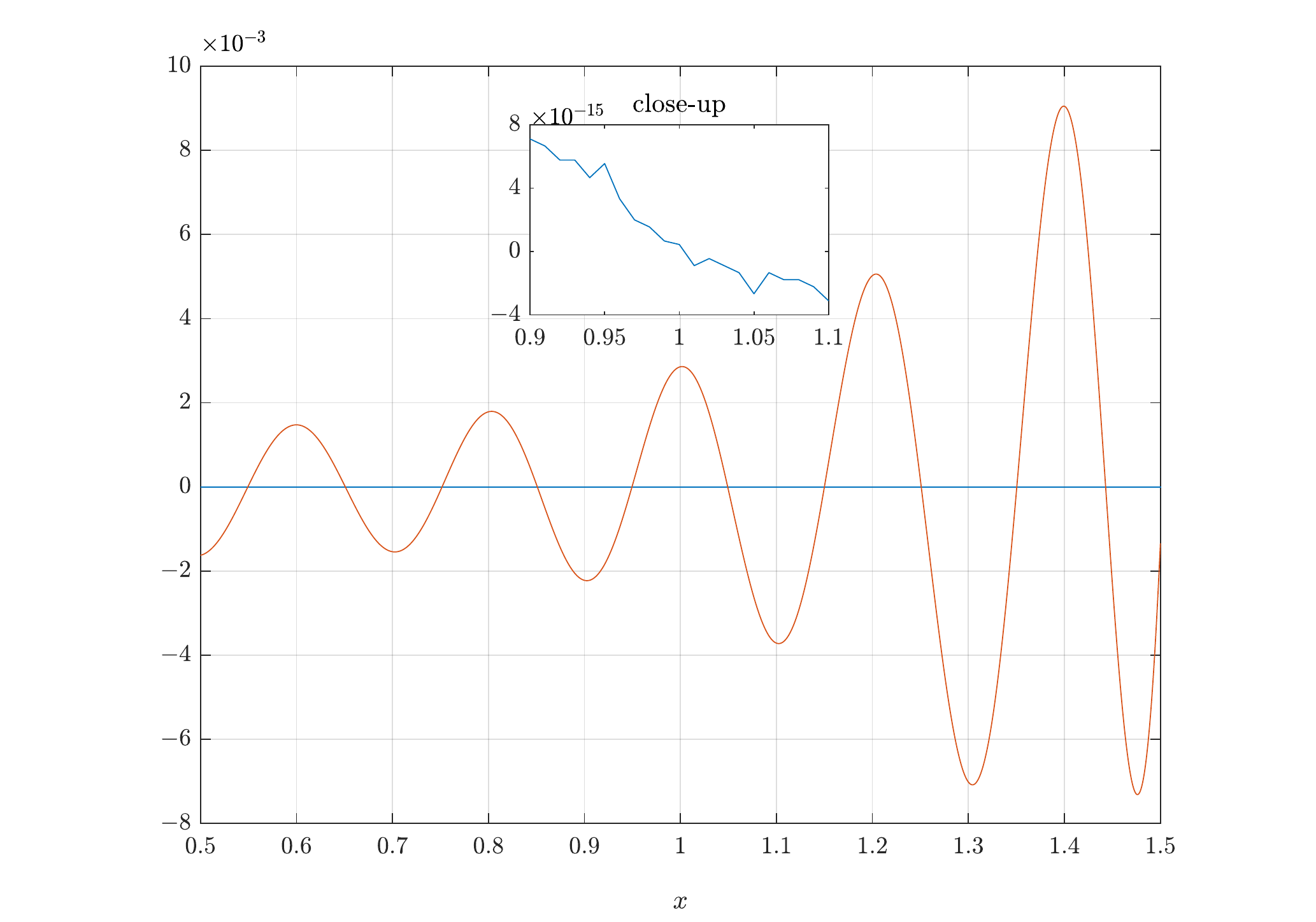}}
\caption{The blue and red curves are drawn by $f_1(x,g_1(x),g_2(x))$ and $f_2(x,g_1(x),g_2(x))$, respectively.}
\label{fig8-1}
\end{figure}
\begin{figure}[!ht]
\centerline{\includegraphics[clip, trim=0cm 7.5cm 0cm 7.5cm, width=0.70\textwidth]{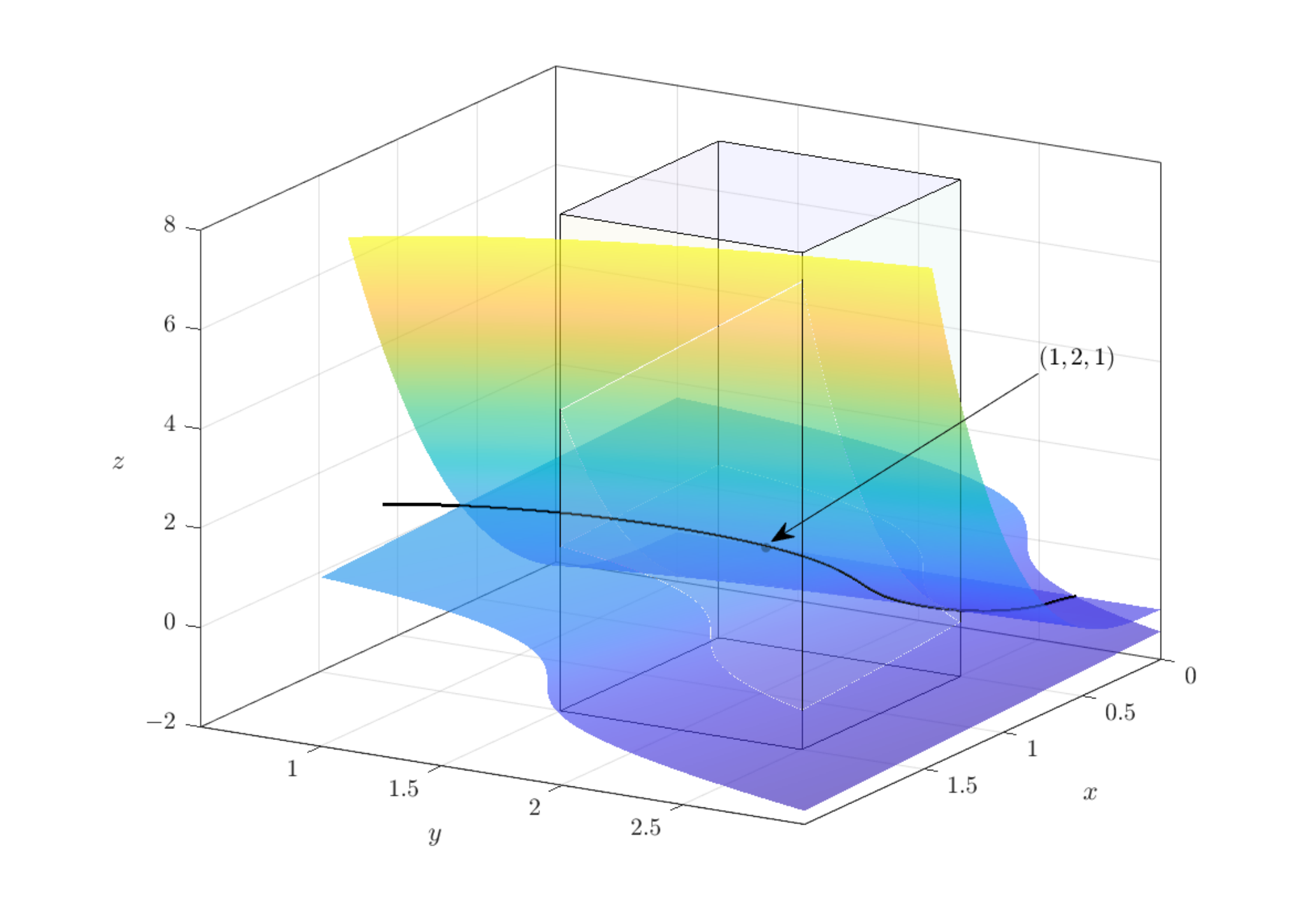}}
\vspace{-0.5cm}
\caption{The black curve denotes $x\mapsto(y,z)$, where $y=g_1(x)$ and $z=g_2(x)$, which approximate $y=y(x)$ and $z=z(x)$ such that $f_1(x,y,z)=0$ and $f_2(x,y,z)=0$ in $R\times I$, respectively.}
\label{fig9}
\end{figure}
\end{example}

The rectangle $R$ in Assumption 2 may contain a degenerate critical point. Regardless of the case, Theorem \ref{system of analytic solutions} still works.
We present such an example to show that a system of implicit functions is approximated by 
multivariate polynomials.

\begin{example} \label{dege}
Consider the pair of equations 
\begin{equation} \label{degenerate_equations}
\begin{split}
f_1(x,y,z) &= x^2 + y^2 +(x-1)z^2, \\
f_2(x,y,z) &= x^2 + 2y^2 +(y-1) z^2
\end{split}
\end{equation}
with $(0,0,0)$ at which $f_1$ and $f_2$ vanish. Since there is no linear term in (\ref{degenerate_equations}), at $(0,0,0)$ the rank of the Jacobian matrix 
is zero (as depicted in Figure \ref{fig10}). 
However, confining a range, for example, to $0\le x\le1$, $0\le y\le1$, and $0\le z<\infty$ 
and taking a rectangle $[0,0.5]\times[0,0.5]\times[0,3]$ 
which has a corner at $(0,0,0)$. 
On the rectangle, by observation, $\sign_z f_k(x,y,z)$ has only one jump discontinuity and $n_z(f_k)=-1$, $n_x(f_k)=n_y(f_k)=1$ $(k=1,2)$.

By Theorem \ref{system of analytic solutions} 
with $N=8$, $z=h_1(x,y)$ and $z=h_2(x,y)$ approximate $z=z_1(x,y)$ and $z=z_2(x,y)$ such that $f_1(x,y,z_1)=0$ and $f_2(x,y,z_2)=0$ whose coefficient matrices  are
\begin{equation*}
\arraycolsep=1.5pt\def\arraystretch{0.6}	
\begin{matrix}
& \\
\left(\kern-4pt \vphantom{ \begin{matrix} 12 \\ 12 \\ 12 \\ 12 \\ 12 \\ 12 \\ 12 \\ 12  \end{matrix} } \right.
\end{matrix}
\hspace{-0pt}
\begin{matrix} 
\Scale[0.75]{1} & \Scale[0.75]{y} & \Scale[0.75]{y^2}  & \Scale[0.75]{y^3} & 
\Scale[0.75]{y^4} & \Scale[0.75]{y^5} & \Scale[0.75]{y^6} & \Scale[0.75]{y^7} & \\ \arrayrulecolor{gray} \hline \\[-2mm]
  \Scale[0.55]{\rotatebox{0}{0.408241400667676}} &
  \Scale[0.55]{\rotatebox{0}{0.816531359019761}} &
  \Scale[0.55]{\rotatebox{0}{0.816117756871955}} &
  \Scale[0.55]{\rotatebox{0}{-1.63446077805188}} &
  \Scale[0.55]{\rotatebox{0}{2.50792658208951}} &
  \Scale[0.55]{\rotatebox{0}{-1.48883805443688}} &
  \Scale[0.55]{\rotatebox{0}{-8.02452447454164}} &
  \Scale[0.55]{\rotatebox{0}{21.2048659921929}} \\
  \Scale[0.55]{\rotatebox{0}{1.08868964456673}} &
  \Scale[0.55]{\rotatebox{0}{-1.08903120663835}} &
  \Scale[0.55]{\rotatebox{0}{2.19322111746780}} &
  \Scale[0.55]{\rotatebox{0}{2.15725284820275}} &
  \Scale[0.55]{\rotatebox{0}{-21.7192534727769}} &
  \Scale[0.55]{\rotatebox{0}{57.4809959627986}} &
  \Scale[0.55]{\rotatebox{0}{22.3087274847608}} &
  \Scale[0.55]{\rotatebox{0}{-292.817460147257}} \\
  \Scale[0.55]{\rotatebox{0}{1.63286148893869}} &
  \Scale[0.55]{\rotatebox{0}{1.10162416860967}} &
  \Scale[0.55]{\rotatebox{0}{-10.2243383275022}} &
  \Scale[0.55]{\rotatebox{0}{25.8424726647171}} &
  \Scale[0.55]{\rotatebox{0}{55.6146230887942}} &
  \Scale[0.55]{\rotatebox{0}{-522.134552371370}} &
  \Scale[0.55]{\rotatebox{0}{345.242287373544}} &
  \Scale[0.55]{\rotatebox{0}{2375.12070190542}} \\
  \Scale[0.55]{\rotatebox{0}{-0.243427885794297}} &
  \Scale[0.55]{\rotatebox{0}{3.86133661190640}} &
  \Scale[0.55]{\rotatebox{0}{18.5593250904849}} &
  \Scale[0.55]{\rotatebox{0}{-161.347274796296}} &
  \Scale[0.55]{\rotatebox{0}{318.132884269998}} &
  \Scale[0.55]{\rotatebox{0}{1280.58538056068}} &
  \Scale[0.55]{\rotatebox{0}{-5803.03015776689}} &
  \Scale[0.55]{\rotatebox{0}{4194.37168512031}} \\
  \Scale[0.55]{\rotatebox{0}{2.88860850137300}} &
  \Scale[0.55]{\rotatebox{0}{-20.2239288354349}} &
  \Scale[0.55]{\rotatebox{0}{78.8833088283334}} &
  \Scale[0.55]{\rotatebox{0}{185.824867341472}} &
  \Scale[0.55]{\rotatebox{0}{-5658.62904533275}} &
  \Scale[0.55]{\rotatebox{0}{21857.4809412704}} &
  \Scale[0.55]{\rotatebox{0}{61487.2615726907}} &
  \Scale[0.55]{\rotatebox{0}{-321312.022583376}} \\
  \Scale[0.55]{\rotatebox{0}{0.810126065016658}} &
  \Scale[0.55]{\rotatebox{0}{45.9761921136848}} &
  \Scale[0.55]{\rotatebox{0}{-519.510533394060}} &
  \Scale[0.55]{\rotatebox{0}{1708.18678563788}} &
  \Scale[0.55]{\rotatebox{0}{19624.6879190755}} &
  \Scale[0.55]{\rotatebox{0}{-127143.126595965}} &
  \Scale[0.55]{\rotatebox{0}{-153229.075397079}} &
  \Scale[0.55]{\rotatebox{0}{1382061.71528484}} \\
  \Scale[0.55]{\rotatebox{0}{-5.34208712455283}} &
  \Scale[0.55]{\rotatebox{0}{38.6432668335212}} &
  \Scale[0.55]{\rotatebox{0}{134.504396172128}} &
  \Scale[0.55]{\rotatebox{0}{-4977.91637626210}} &
  \Scale[0.55]{\rotatebox{0}{65268.6651956214}} &
  \Scale[0.55]{\rotatebox{0}{-186901.326165857}} &
  \Scale[0.55]{\rotatebox{0}{-1100349.48234761}} &
  \Scale[0.55]{\rotatebox{0}{4454278.48681176}} \\
  \Scale[0.55]{\rotatebox{0}{18.7678308525790}} &
  \Scale[0.55]{\rotatebox{0}{-276.726744898086}} &
  \Scale[0.55]{\rotatebox{0}{2817.28742085553}} &
  \Scale[0.55]{\rotatebox{0}{-246.968670877888}} &
  \Scale[0.55]{\rotatebox{0}{-314854.684257636}} &
  \Scale[0.55]{\rotatebox{0}{1459642.19080275}} &
  \Scale[0.55]{\rotatebox{0}{4208677.39377466}} &
  \Scale[0.55]{\rotatebox{0}{-22904429.3420260}} \\
 \end{matrix} 
\hspace{-0.2em}
\begin{matrix}
& \\[0.45em]
\left . \vphantom{ \begin{matrix} 12 \\ 12 \\ 12 \\ 12 \\ 12 \\ 12 \\ 12 \\ 12  \end{matrix} } \right )
\begin{matrix}
\Scale[0.65]{1} \\ \Scale[0.65]{x}  \\ \Scale[0.65]{x^2} \\ \Scale[0.65]{x^3} \\
\Scale[0.65]{x^4} \\ \Scale[0.65]{x^5} \\ \Scale[0.65]{x^6} \\ \Scale[0.65]{x^7}
\end{matrix}
\end{matrix}	%
\end{equation*}
and 
\begin{equation*}
\arraycolsep=1.5pt\def\arraystretch{0.6}	
\begin{matrix}
& \\
\left(\kern-4pt \vphantom{ \begin{matrix} 12 \\ 12 \\ 12 \\ 12 \\ 12 \\ 12 \\ 12 \\ 12  \end{matrix} } \right.
\end{matrix}
\hspace{-0pt}
\begin{matrix} 
\Scale[0.75]{1} & \Scale[0.75]{y} & \Scale[0.75]{y^2}  & \Scale[0.75]{y^3} & 
\Scale[0.75]{y^4} & \Scale[0.75]{y^5} & \Scale[0.75]{y^6} & \Scale[0.75]{y^7} & \\ \arrayrulecolor{gray} \hline \\[-2mm]
  \Scale[0.55]{\rotatebox{0}{0.499998460565297}} &
  \Scale[0.55]{\rotatebox{0}{1.66668593709208}} &
  \Scale[0.55]{\rotatebox{0}{2.10817219588402}} &
  \Scale[0.55]{\rotatebox{0}{-0.511194801196446}} &
  \Scale[0.55]{\rotatebox{0}{6.36464990241272}} &
  \Scale[0.55]{\rotatebox{0}{-7.09380895560241}} &
  \Scale[0.55]{\rotatebox{0}{-4.67859770787413}} &
  \Scale[0.55]{\rotatebox{0}{57.9648809959836}} \\
  \Scale[0.55]{\rotatebox{0}{0.666653645507011}} &
  \Scale[0.55]{\rotatebox{0}{-1.33330925771400}} &
  \Scale[0.55]{\rotatebox{0}{2.84808687276566}} &
  \Scale[0.55]{\rotatebox{0}{-1.66216527457875}} &
  \Scale[0.55]{\rotatebox{0}{-18.2420025212197}} &
  \Scale[0.55]{\rotatebox{0}{104.884228084066}} &
  \Scale[0.55]{\rotatebox{0}{-165.532336874666}} &
  \Scale[0.55]{\rotatebox{0}{-91.7951419714518}} \\  
  \Scale[0.55]{\rotatebox{0}{0.888956582839726}} &
  \Scale[0.55]{\rotatebox{0}{0.591804280528688}} &
  \Scale[0.55]{\rotatebox{0}{-8.91692976651670}} &
  \Scale[0.55]{\rotatebox{0}{42.5967553060441}} &
  \Scale[0.55]{\rotatebox{0}{-110.442206417017}} &
  \Scale[0.55]{\rotatebox{0}{-112.581487186787}} &
  \Scale[0.55]{\rotatebox{0}{2237.66959943601}} &
  \Scale[0.55]{\rotatebox{0}{-5202.74439326453}} \\  
  \Scale[0.55]{\rotatebox{0}{-1.18333322661317}} &
  \Scale[0.55]{\rotatebox{0}{5.53432073339570}} &
  \Scale[0.55]{\rotatebox{0}{-7.65765640294483}} &
  \Scale[0.55]{\rotatebox{0}{-83.2590332723340}} &
  \Scale[0.55]{\rotatebox{0}{1050.98717515900}} &
  \Scale[0.55]{\rotatebox{0}{-3369.24411323074}} &
  \Scale[0.55]{\rotatebox{0}{-10968.6904511511}} &
  \Scale[0.55]{\rotatebox{0}{52113.7334662549}} \\
  \Scale[0.55]{\rotatebox{0}{0.755515127011205}} &
  \Scale[0.55]{\rotatebox{0}{-14.1281347165804}} &
  \Scale[0.55]{\rotatebox{0}{117.141832651004}} &
  \Scale[0.55]{\rotatebox{0}{-446.923762231644}} &
  \Scale[0.55]{\rotatebox{0}{-3051.84753534045}} &
  \Scale[0.55]{\rotatebox{0}{25541.2822154715}} &
  \Scale[0.55]{\rotatebox{0}{5367.23753030004}} &
  \Scale[0.55]{\rotatebox{0}{-208929.001410320}} \\
  \Scale[0.55]{\rotatebox{0}{1.15562806583154}} &
  \Scale[0.55]{\rotatebox{0}{7.31554371402922}} &
  \Scale[0.55]{\rotatebox{0}{-250.790503985844}} &
  \Scale[0.55]{\rotatebox{0}{2273.89071435435}} &
  \Scale[0.55]{\rotatebox{0}{-5099.53557921255}} &
  \Scale[0.55]{\rotatebox{0}{-37753.2238770695}} &
  \Scale[0.55]{\rotatebox{0}{199854.775802096}} &
  \Scale[0.55]{\rotatebox{0}{-229837.302557044}} \\
  \Scale[0.55]{\rotatebox{0}{-3.42193556701504}} &
  \Scale[0.55]{\rotatebox{0}{45.9408234577807}} &
  \Scale[0.55]{\rotatebox{0}{-642.740625181014}} &
  \Scale[0.55]{\rotatebox{0}{1883.35634160718}} &
  \Scale[0.55]{\rotatebox{0}{64000.0674666749}} &
  \Scale[0.55]{\rotatebox{0}{-360910.140011759}} &
  \Scale[0.55]{\rotatebox{0}{-774407.965648020}} &
  \Scale[0.55]{\rotatebox{0}{5036249.52507314}} \\
  \Scale[0.55]{\rotatebox{0}{2.04331521739714}} &
  \Scale[0.55]{\rotatebox{0}{-95.5669077741437}} &
  \Scale[0.55]{\rotatebox{0}{2664.22271348121}} &
  \Scale[0.55]{\rotatebox{0}{-17979.3545608622}} &
  \Scale[0.55]{\rotatebox{0}{-131490.726833923}} &
  \Scale[0.55]{\rotatebox{0}{1166833.18379635}} &
  \Scale[0.55]{\rotatebox{0}{782649.427964863}} &
  \Scale[0.55]{\rotatebox{0}{-11779204.4616064}} \\
\end{matrix}
\hspace{-0.2em}
\begin{matrix}
& \\[0.45em]
\left . \vphantom{ \begin{matrix} 12 \\ 12 \\ 12 \\ 12 \\ 12 \\ 12 \\ 12 \\ 12  \end{matrix} } \right )
\begin{matrix}
\Scale[0.65]{1} \\ \Scale[0.65]{x}  \\ \Scale[0.65]{x^2} \\ \Scale[0.65]{x^3} \\
\Scale[0.65]{x^4} \\ \Scale[0.65]{x^5} \\ \Scale[0.65]{x^6} \\ \Scale[0.65]{x^7}
\end{matrix}
\end{matrix},	%
\end{equation*}
respectively.
Put $h= h_1- h_2$ and then $n_y(h)=-1$ in $[0,0.5]\times[0,0.5]$.
With $N=8$, the coefficient matrix of $y=h(x)$ is calculated as
\begin{equation*}
\arraycolsep=1.5pt\def\arraystretch{0.6}	
\begin{matrix}
& \\
\left(\kern-4pt \vphantom{ \begin{matrix} 12  \end{matrix} } \right.
\end{matrix}
\hspace{-0pt}
\begin{matrix} 
\Scale[0.75]{1} & \Scale[0.75]{x} & \Scale[0.75]{x^2}  & \Scale[0.75]{x^3} & 
\Scale[0.75]{x^4} & \Scale[0.75]{x^5} & \Scale[0.75]{x^6} & \Scale[0.75]{x^7} & \\ \arrayrulecolor{gray} \hline \\[-2mm]
  \Scale[0.58]{\rotatebox{0}{0.116319702320269}} &
  \Scale[0.58]{\rotatebox{0}{0.713165167944210}} &
  \Scale[0.58]{\rotatebox{0}{0.843601400792723}} &
  \Scale[0.58]{\rotatebox{0}{-0.552538829658902}} &
  \Scale[0.58]{\rotatebox{0}{-3.00696939156087}} &
  \Scale[0.58]{\rotatebox{0}{19.9372907280740}} &
  \Scale[0.58]{\rotatebox{0}{70.0568910222166}} &
  \Scale[0.58]{\rotatebox{0}{-382.742430825347}} 
 \end{matrix} 
\begin{matrix}
& \\
\left . \vphantom{ \begin{matrix} 12  \end{matrix} } \right ).
\end{matrix}	%
\end{equation*}
In Figure \ref{fig11}, the values of $f_1(x,h(x),h_2(x,h(x))$ and $f_2(x,h(x),h_2(x,h(x))$ are illustrated, where $y=h(x)$ and $z=h_2(x,h(x))$ approximate $y=y(x)$ and $z=z(x)$ such that  $f_1(x,y,z)=0$ and $f_2(x,y,z)=\mathbf{0}$, respectively. 
Furthermore, the approximated curve $x\mapsto(h(x),h_2(x,h(x))$ is shown in Figure \ref{fig12}.

\begin{figure}[!ht]
\centerline{\includegraphics[clip, trim=0cm 7.5cm 0cm 7.5cm, width=0.70\textwidth]{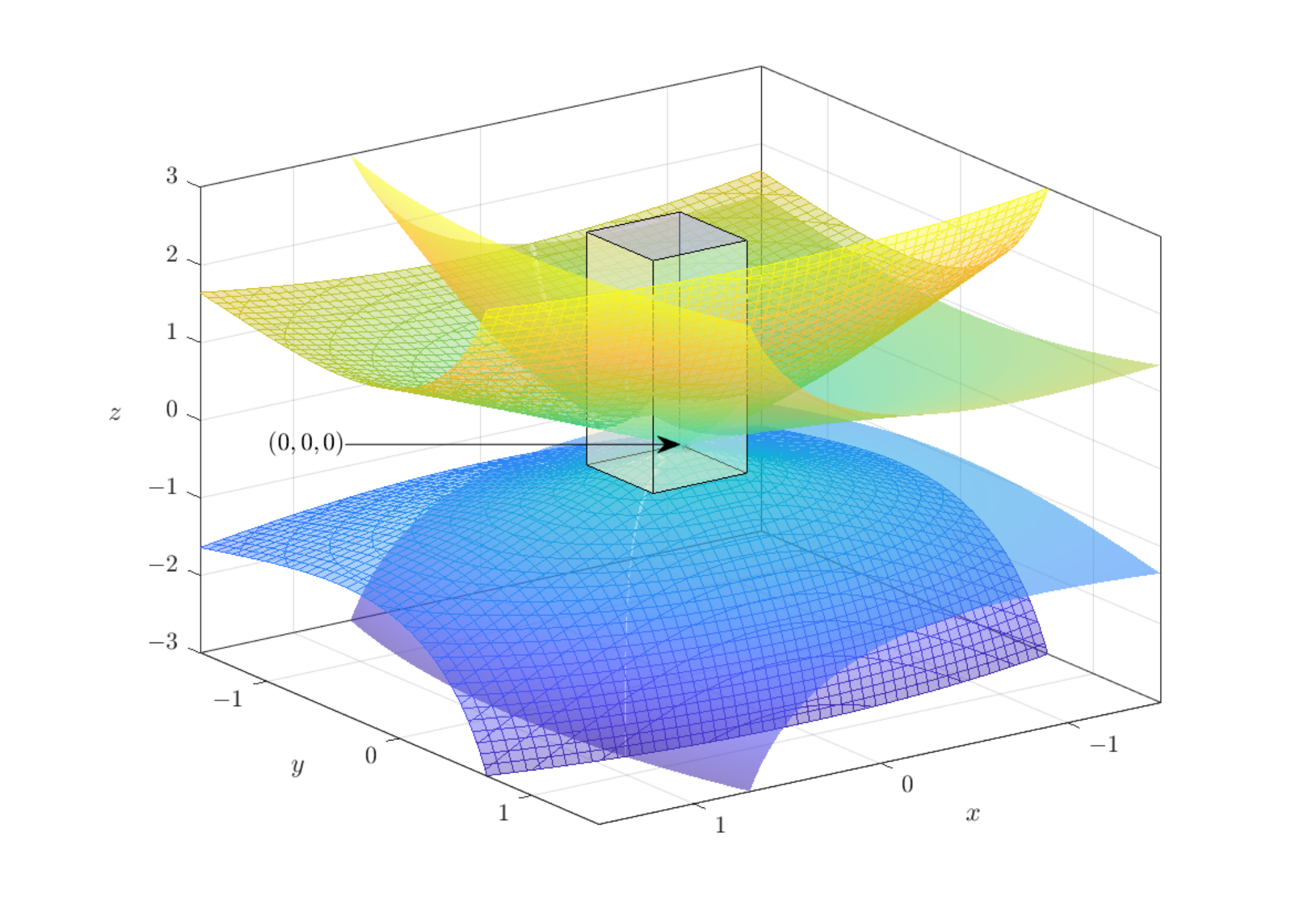}}
\vspace{-0.5cm}
\caption{Surfaces of $f_1(x,y,z)=x^2 + y^2 +(x-1)z^2=0$, $f_2(x,y,z)=x^2 + 2y^2 +(y-1) z^2=0$ with $f(0,0,0)=(0,0)$ and a rectangle $[0,0.5]\times[0,0.5]\times[0,3]$ which contains a corner $(0,0,0)$. 
The adjacent netted surfaces illustrate the implicit function $f_2(x,y,z)=0$.
 }
\label{fig10}
\end{figure}
\begin{figure}[!ht]
\centerline{\includegraphics[clip, trim=0cm 7.5cm 0cm 7.5cm, width=0.70\textwidth]{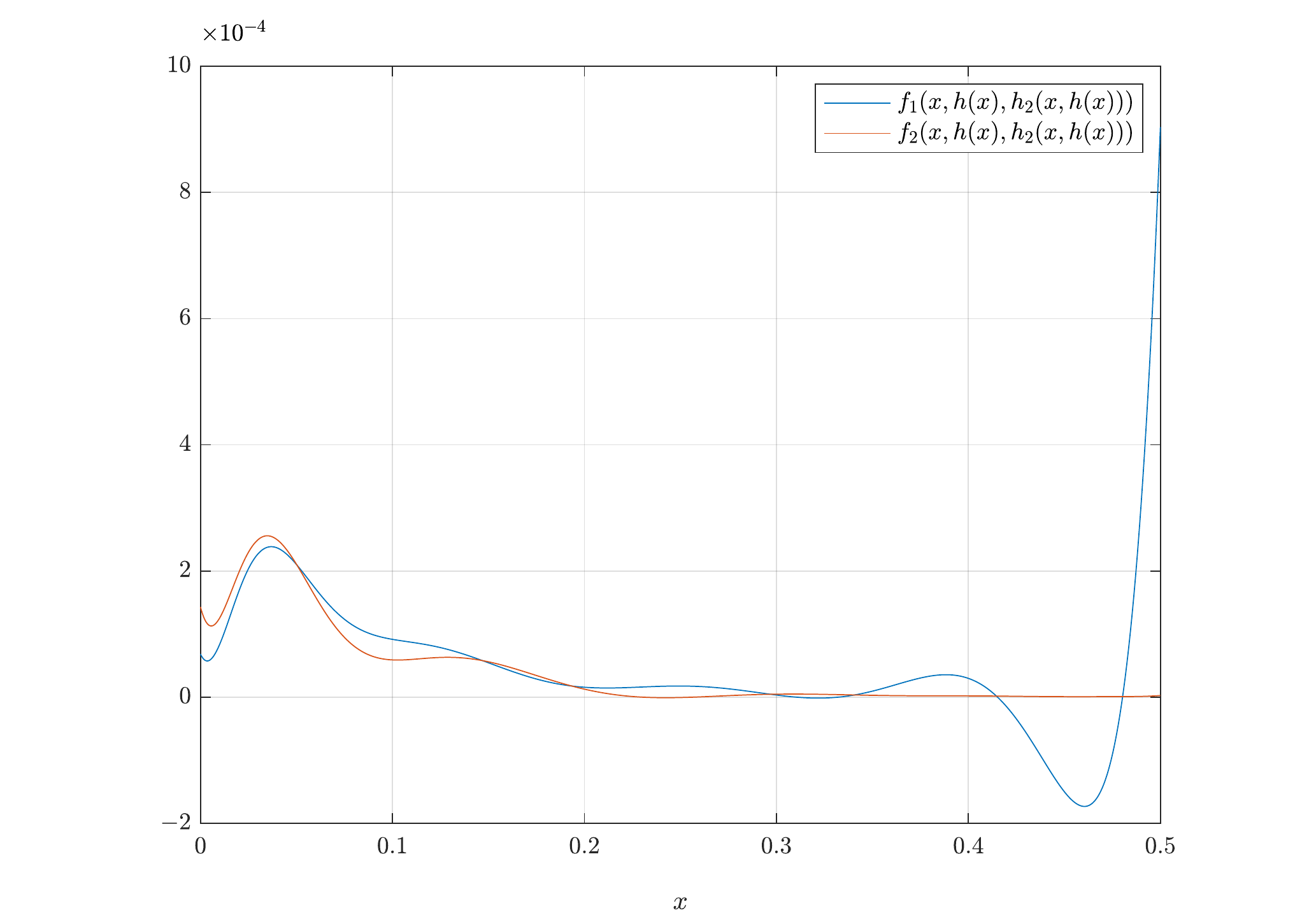}}
\caption{For $y=h(x)$ and $z=h_2(x,h(x))$,
 the values of $f_1(x,h(x),h_2(x,h(x)))$ and $f_2(x,h(x),h_2(x,h(x)))$ are illustrated.}
\label{fig11}
\end{figure}
\begin{figure}[!ht]
\centerline{\includegraphics[clip, trim=0cm 7.5cm 0cm 7.5cm, width=0.70\textwidth]{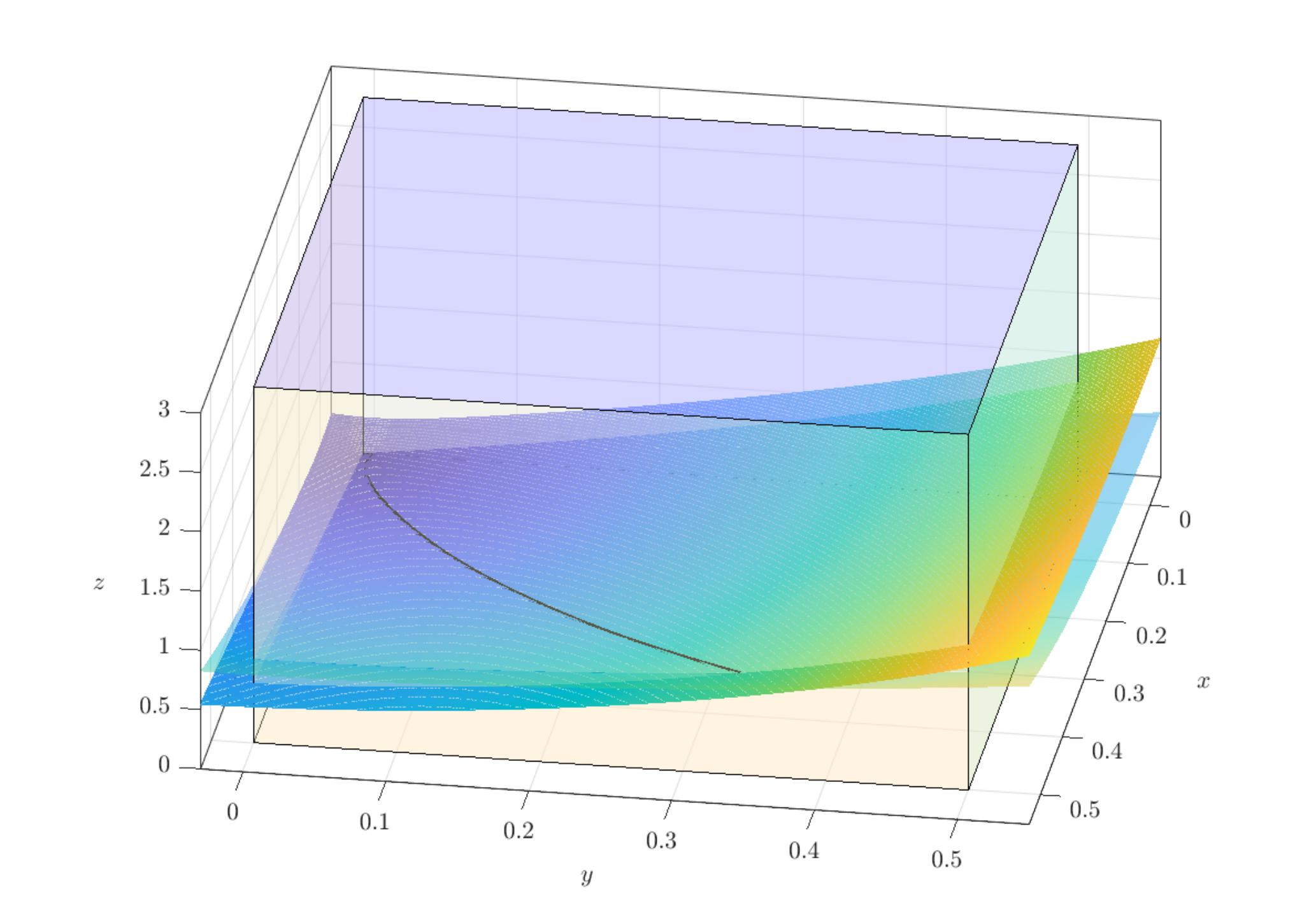}}
\caption{In $(0,0,0)\in [0,0.5]\times[0,0.5]\times[0,3]$, two surfaces of $f_1(x,y,z)=x^2 + y^2 +(x-1)z^2=0$ and $f_1(x,y,z)=x^2 + 2y^2 +(y-1) z^2=0$, and the curve of $(h(x),h_2(x,h(x))$ are depicted.}
\label{fig12}
\end{figure}

\end{example}

\begin{remark}
\begin{itemize}
\item[$(i)$ ] Throughout this article the partitions  can be chosen differently on each axis.
\item[$(ii)$] 
%

We close this article with the application of our formulation to a continuous implicit function.
For a continuous function $f(x,y):\mathbb{R}^{n+1}\to\mathbb{R}$, suppose that there is 
an $R\times I$ and a unique continuous function $y=g(x):R\to I$ such that $f=0$.
We do not know whether  
the sequence of multivariate polynomials which are calculated in Section 4, converges to the continuous implicit function.
Their coefficient matrices depend on the choice of partitions of $R$.
Because of this, by choosing the partitions suitably, the desired convergence can be expected.
More precisely, by using dyadic decomposition of partitions, the derived multivariate polynomials can converge to the continuous implicit function in weak-star topology.
Indeed, put $\Gamma_1=\{R\}$ and let $\Gamma_2$ be a collection of grid blocks $R_{2,\alpha}$ of $R$, which are caused by dyadic decomposition on $R$. 
For a positive integer $N$, inductively, let $\Gamma_{N+1}$ consist of all grid blocks $R_{2^{N+1},\alpha}$ of every element of $\Gamma_N$ by dyadic decomposition.
Now, construct a multivariate polynomial $\tilde g_N$ whose coefficients are calculate by solving (\ref{poly integration}) over $\Gamma_N$ according to (\ref{1st coefficients in poly}).  Let $R_{2^{N'},\alpha}\in \bigcup_N \Gamma_N$. 
For any $N\ge N'$, there are non-overlapping grid blocks in $\Gamma_N$ whose union equals $R_{2^{N'},\alpha}$, i.e., $R_{2^{N'},\alpha}=\bigcup_{R_{2^{N},\beta}\subset R_{2^{N'},\alpha}}R_{2^{N},\beta}$.
Furthermore, the grid points of $\Gamma_{N'}$ are contained in those of $\Gamma_{N}$.
By (\ref{poly integration}) and (\ref{2nd data}), we have  
\begin{equation} \label{finer}
	\int_{R_{2^{N'},\alpha}}\tilde g_N-g\,dx
	=\sum_{R_{2^{N},\beta}\subset R_{2^{N'},\alpha}}
		\int_{R_{2^{N},\beta}}\tilde g_N-g\,dx=0
\end{equation}
for every $N\ge N'$.
On the other hand, since the collection of all finite linear combinations of characteristic functions which are supported in dyadic grid blocks in $\bigcup_N \Gamma_N$ is dense in $L^1(R)$, the multivariate polynomial $\tilde g_N$ converges to $g$ weak-star in $L^\infty(R)$ as $N\to\infty$ by the Riesz representation theorem for the Lebesgue spaces.
%
\end{itemize}
\end{remark}





\begin{thebibliography}{77}
%



\bibitem{dini} 
U. Dini.
\textit{Lezioni di analisi infinitesimale,} 
Vol 1, Pisa, 197--241, 1907.

\bibitem{dr} 
A. L. Dontchev and R. T. Rockafellar.
Implicit functions and solution mappings: 
A view from variational analysis,
2nd edition, \textit{Springer}, 2014.


\bibitem{folland} 
G. Folland.
Real analysis: Modern techniques and their applications,
2nd edition,
\textit{Wiley}, 2007.

\bibitem{kp} 
S. G. Krantz and H. R. Parks.
The implicit function theorems: History, theory, and applications,
\textit{Birkha\"user}, 2013.

\bibitem{kp2} 
S. G. Krantz and H. R. Parks.
A Primer of Real Analytic Functions, the 2nd edition,
\textit{Birkha\"user}, 2002.


\bibitem{mat} 
R. J. Mathar.
\textit{Improved first estimates to the solution of Kepler’s Equation,}
arXiv:2108.03215, 2021. 

\bibitem{moser}
J. K. Moser.
\textit{A new technique for the construction of solutions of nonlinear differential equations,}
Proceedings of the National Academy  of Sciences of the United States of America
47, 1824--1831, 1961.


\bibitem{nash}
J .F. Nash.
\textit{The imbedding problem for Riemannian manifolds,}
Annals of Mathematics 63, 20--63, 1956.


\bibitem{newton} 
I. Newton.
Mathematical papers of Isaac Newton, vol 2, edited by D.T. Whiteside,
\textit{Cambridge University Press}, 1968.

\bibitem{RP}
V. Raposo-Pulido and J. Pel\'aez.
\textit{An efficient code to solve the Kepler equation. Elliptic case,}
Monthly Notices Royal Astro. Soc. 467, 1702--1713. 2017.



\bibitem{struik} 
D. J. Struik.
A source book in mathematics, 1200--1800, 
\textit{Harvard University Press, Cambridge, Massachusetts}, 1969.

\bibitem{rudin} 
W. Rudin.
Functional analysis, the 2nd edition, 
\textit{McGraw-Hill Science}, 1991.


\end{thebibliography}
\end{document}